%
%
%
%
\documentclass{m2an}
%
\usepackage{amssymb,amsmath,color}
\usepackage{graphicx,amsthm}
\usepackage{amsmath}
\allowdisplaybreaks
\usepackage{subfigure,txfonts}
\usepackage[none]{hyphenat}
\usepackage{latexsym,bm}
\usepackage{booktabs}
\usepackage{cases}
\usepackage{lineno}

\usepackage{geometry}
\geometry{verbose,tmargin=1.8cm,bmargin=1.8cm,lmargin=1.8cm,rmargin=1.8cm,headheight=1cm,headsep=1cm,footskip=0.5cm}
\usepackage{mathtools}

\newtheorem{The}{Theorem}[section]

\begin{document}

\title{ Error analysis of the highly efficient and energy stable schemes for the 2D/3D two-phase MHD }
\author{Ke Zhang}\address{College of Mathematics and System Sciences, Xinjiang University, Urumqi 830046, China. Email: zhangkemath@139.com}
\author{Haiyan Su}
\address{Corresponding author. College of Mathematics and System Sciences, Xinjiang University, Urumqi 830046, China. Email: shymath@126.com. }

\author{Xinlong Feng}\address{College of Mathematics and System Sciences, Xinjiang University, Urumqi 830046, China. Email: fxlmath@gmail.com/fxlmath@xju.edu.cn}
%
%
\begin{abstract}
In this paper, we mainly focus on the rigorous convergence analysis of two fully decoupled, unconditionally energy-stable methods for the diffuse interface two-phase magnetohydrodynamics (MHD) model.
The two methods consist of the semi-implicit stabilization method and the invariant energy quadratization (IEQ) method, which are both applied to the phase field system. In addition, the pressure correction method is used for the saddle point system, and appropriate implicit-explicit treatments are employed for the nonlinear coupled terms. We prove the unconditional energy stability of the two schemes. In addition, we mainly establish the error estimates based on the bounds of $\left\|\phi^{k}\right\|_{L^{\infty}}$ and $\left\|\textbf{b}^{k}\right\|_{L^{\infty}}$. Several numerical examples are presented to test the accuracy and stability of the proposed methods.
\end{abstract}
%
%
\subjclass{65N12, 65N30, 65N50, 35Q79}

\keywords{Two-phase MHD model; Fully decoupled methods; Unconditional energy stable; Error estimates.}

\maketitle
 
\section{Introduction}

The magneto-hydrodynamical (MHD) system models the behaviors of conducting fluids, such as plasmas, liquid metals, salt water, and electrolytes, in an external electromagnetic field \cite{2000Liquid, 2016Numerical}. The diffuse interface two-phase fluid dynamics is a new branch of physics that studies the flow behavior of two-phase systems \cite{2003ALC, 2010Level, 2012Effect}, which is famous for capturing the evolution of the interface. In this paper, we study the characteristics of fluid dynamics in a mixture of two incompressible, immiscible, conducting fluids under the external electromagnetic field, where the situation is referred to as two-phase MHD. It is primarily applicable to chemical reactors, liquid-metal applications, magnetic pumps, and some other related fields \cite{1964EFFECT, 2022Highly, 2021SharpZX}. To the best of our knowledge, the diffuse interface two-phase MHD model  was first proposed in \cite{2019AYang}.




The diffuse interface two-phase MHD model is governed  by the phase field equation for the free interface, the Navier-Stokes equations for hydrodynamics, and the Maxwell's equations for electromagnetism through convection, stresses, generalized Ohm's law and Lorentz forces \cite{2019AYang, 2022Highly}. And the  existence of weak solutions has been established in \cite{2019AYang} by using the fixed point theorem and the compactness method.  This model poses significant challenges in the development of highly efficient numerical methods, including: (1) the  nonlinear terms, e.g., $(\textbf{v}\cdot \nabla) \textbf{v}$ and $f(\phi)$; (2) the coupled terms, e.g., $\nabla\cdot(\phi \textbf{v})$ and $\nabla\times(\textbf{v}\times\textbf{b})$; (3) the incompressible constraint $\nabla\cdot \textbf{u}=0$, which leads to a saddle point system; and (4) the stiffness of the phase equations, which is  linked to the interfacial width.

It is remarkable that several  attempts have been made in this direction recently.  In \cite{2019AYang}, the authors developed a first-order, fully-coupled method that combines a semi-implicit scheme with a convex splitting technique to ensure energy stability. The first-order semi-implicit stabilization method and the IEQ method, both combining the projection method, were proposed  in \cite{2022Highly}. These methods achieve fully decoupled unconditional energy stability. Similarly, based on the Gauge-Uzawa scheme, both the first-order, fully-decoupled semi-implicit stabilization method and the IEQ method have been proposed in \cite{zhang2023gauge}, and they both satisfy the discrete energy law. The second-order IEQ weakly decoupled method \cite{SU2023107126} and the second-order coupled methods \cite{AAMM-13-761}, which include both time-discrete and fully-discrete schemes, were proposed for solving the diffuse interface two-phase MHD model.

On the one hand, several error estimates were derived mainly based on the coupling scheme for the diffuse interface two-phase MHD model.  For instance, in \cite{2022Unconditional}, convergence analyses were presented for a coupled first-order, semi-implicit stabilized method in a semi-discrete case and a fully-discrete case.  In \cite{2025Error}, error analyses were given for two weakly decoupled, first-order, time-discrete schemes. The presented scheme is weakly decoupled, meaning only  the  velocity field and pressure field are decoupled, while the phase field and the magnetic field are tightly coupled. The error analysis of a coupled, second-order Crank-Nicolson  time-discrete scheme was given in \cite{AAMM-13-761}. A fully-decoupled second-order scalar auxiliary variables (SAV) scheme, based on pressure correction  and the zero-energy-contribution (ZEC) method, was presented in \cite{2024A}. It is noteworthy that the error analysis was conducted only for the first-order scheme.

On the other hand, there exists a limited literature on unconditionally convergent error estimates for fully-decoupled schemes of the diffuse interface two-phase MHD model. For example, the study in \cite{2023ErrorQIU} derived conditional convergence estimates using a semi-implicit discretization framework with a convex-splitting scheme. The error analyses were carried out for the pressure field under the restricted condition $\Delta t\lesssim h$, and for the phase field, velocity field and magnetic field when $\Delta t\leq \Delta t_{0}$ and mesh size $h\leq h_{0}$.  In \cite{0Convergence}, under the restricted condition $\Delta t\leq C$, the convergence analysis was derived for the phase field in the $L^{\infty}(0,T; H^{1})$ norm and for the velocity and the magnetic fields in the $L^{\infty}(0,T; L^{2})$ norm, for a coupled fully-discrete, second-order modified Crank-Nicolson scheme.  Recently,  the unconditional convergence analyses of two fully-decoupled schemes were presented in \cite{2025Unconditionally}, based on the first-order time-discrete scheme and fully-discrete ZEC scheme.

We further explored the error analysis of the fully-decoupled, first-order  scheme, focusing on its unconditional convergence. Specifically, the time-discrete semi-implicit stabilization method and the IEQ scheme are applied to the phase field system, while the pressure correction scheme is used for the incompressible constraint, and the implicit-explicit treatments are employed for nonlinear coupled terms. For the diffuse interface two-phase MHD model, the fully-decoupled scheme will pose  significant difficulties for the analysis of the coupled terms. To address this problem, our main contributions in the paper include the following:
\vspace {5pt}
\begin{itemize}
\item We apply the mathematical induction method to prove the bounds of $\left\|\phi^{k}\right\|_{L^{\infty}}$ and $\left\|\textbf{b}^{k}\right\|_{L^{\infty}}$, which are crucial for our convergence analysis \cite{2019FullyZGD, 2020Convergence}.
\vspace {5pt}
\item The error estimates for both the fully-decoupled time-discrete semi-implicit stabilization method and the IEQ method are unconditionally valid, imposing no restrictions on the time step size or mesh size.
\vspace {5pt}
\item To verify the energy stability and convergence of our scheme, several numerical examples are provided.
\vspace {5pt}
\end{itemize}

This paper is organized as follows. In Section 2, some preliminary results and the two-phase MHD  model are reviewed. The fully-decoupled, unconditional energy stable semi-implicit stabilization algorithm is introduced in Section 3, and its convergence analysis is established in detail. Section 4 provides the error estimate for the unconditional energy stable IEQ scheme. Several numerical examples are presented in Section 5, including a smooth solution, spinodal decomposition, and Boussinesq approximation tests. In Section 6, we make some conclusions.

\section{The two-phase MHD model and notations}

We consider the following diffuse interface two-phase MHD model \cite{2019AYang, 2022Highly, SU2023107126}:
\begin{subequations}\label{2-1}
\begin{align}
&\phi_{t}+\nabla\cdot(\phi \textbf{v})=M \Delta w, \quad \text{in}\ \Omega\times (0,T], \label{mdel1}\\
&w=-\varepsilon \Delta\phi+f(\phi),\quad \text{in}\ \Omega\times (0,T], \label{model2}\\
&\textbf{v}_{t}+(\textbf{v}\cdot \nabla) \textbf{v}- \nu\Delta \textbf{v}+\frac{1}{\mu}\textbf{b}\times\nabla\times\textbf{b}+\nabla p+ \lambda\phi\nabla w=\textbf{f},\quad \text{in}\ \Omega\times (0,T], \label{model3}\\
&\nabla \cdot \textbf{v}=0,\quad \text{in}\ \Omega\times (0,T],\label{model4}\\
&\textbf{b}_{t}+ \frac{1}{\sigma\mu}\nabla\times(\nabla\times\textbf{b})- \nabla\times(\textbf{v}\times\textbf{b}) = \textbf{0},\quad \text{in}\ \Omega\times (0,T], \label{model5}\\
&\nabla \cdot\textbf{b}=0,\quad \text{in} \ \Omega\times (0,T], \label{model6}
\end{align}
\end{subequations}
where $\Omega$ is a bounded, convex polygon  or polyhedron domain in $R^{d}$ ($d=2, 3$) with a Lipschitz boundary, and $T>0$ denotes the termination time.  Let the symbols ($\phi$, $w$, $\textbf{v}$, $p$, $\textbf{b}$)  represent the phase field,  chemical potential, velocity field, pressure field, and magnetic field, respectively. In addition, the parameters $\nu$, $\mu$, $\lambda$ and $\sigma$ stand for the kinematic viscosity, magnetic permeability, capillary coefficient, and electric conductivity, respectively. The parameter $\varepsilon$ represents the interface thickness between the two fluids, and  $M$ is the mobility parameter.

The equations \eqref{2-1} are supplemented with the following initial conditions:
\begin{equation}\label{2-2b-initial}
\begin{aligned}
&\phi|_{t=0}=\phi_0,\quad \textbf{v}|_{t=0}=\textbf{v}_0,\quad \textbf{b}|_{t=0}=\textbf{b}_0,
\end{aligned}
\end{equation}
and the  corresponding boundary conditions:
\begin{equation}\label{2-2b-boundary}
\begin{aligned}
&\frac{\partial\phi}{\partial \textbf{n}}|_{\partial\Omega}=0, \quad \frac{\partial w}{\partial \textbf{n}}|_{\partial\Omega}=0, \quad \textbf{v}|_{\partial\Omega}=\textbf{0},\quad \textbf{b}\times \textbf{n}|_{\partial\Omega}=\textbf{0}.
\end{aligned}
\end{equation}
The phase field $\phi$ represents the  mixture of two immiscible, incompressible fluids, which can be presented as 
\begin{equation}\label{2-1c}
\phi ( \textbf{x}, t)=\left\{
\begin{aligned}
-1, \qquad \rm fluid\, 1,\\
1, \qquad \rm fluid\, 2.
\end{aligned}
\right.
\end{equation}

It should be noted that $f(\phi)$=$F^{\prime}(\phi)$, where the Ginzburg-Landau double-well type potential is defined as $F(\phi)=\frac{1}{4\varepsilon}(\phi^{2}-1)^{2}$. And we can extend the potential function $F(\phi)$ to the entire domain \cite{1958Free, 2010Energy} as
\begin{equation*}
F(\phi)=\left\{
\begin{aligned}
&\frac{1}{\varepsilon} (\phi+1)^{2}, \qquad \phi\in (-\infty, -1],\\
&\frac{1}{4\varepsilon} (\phi^2-1)^{2}, \quad \phi\in [-1, 1],\\
&\frac{1}{\varepsilon} (\phi-1)^{2}, \qquad \phi\in [1, +\infty),\\
\end{aligned}
\right.
\end{equation*}
assuming that $|F^{\prime\prime}(\phi)|\leq C_{1}=2/\varepsilon$, for all $\phi\in R$.
{\Rem
The Flory-Huggins logarithmic potential is another popular choice for the potential functional \cite{2019The} as
\begin{equation*}
F(\phi)=\frac{1+\phi}{2} \ln  \left(\frac{1+\phi}{2}\right)+\frac{1-\phi}{2} \ln \left(\frac{1-\phi}{2}\right)+\frac{\vartheta}{4}(\phi^{2}-1)^{2},
\end{equation*}
where $\vartheta> 2$ is the energy parameter.
}

Let $L^{m}(\Omega)$ denote the usual Lebesgue space on $\Omega$, equipped with the norm $\|\cdot\|_{L^{m}}$. We introduce the $L^{2}$ norm  $\|\textbf{v}\|=(\textbf{v},\textbf{v})^{1/2}$, and the inner product $(\textbf{u}, \textbf{v})=\int_{\Omega}\textbf{u}\cdot\textbf{v}\mbox{d}\textbf{x}$, where $\textbf{u}$ and $\textbf{v}$ are two vector functions. $W^{k,m}(\Omega)$ stands for the standard Sobolev spaces  defined on $\Omega$, equipped with the standard Sobolev norms $\|\cdot\|_{k,m}$. Let $H^{k}(\Omega)$ denote  $W^{k,2}(\Omega)$, with  the corresponding norm being $\|\cdot\|_{H^{k}}$. We introduce the following standard Sobolev spaces 
\begin{align*}
& H^{1} (\Omega)=\{\phi\in L^{2}(\Omega):\nabla\phi\in L^{2}(\Omega)^{d}\},\\
&\textbf{H}^{1}_{0} (\Omega)=\{\textbf{u}\in H^{1} (\Omega)^{d}: \textbf{u}|_{\partial\Omega}=\mathbf{0}\},\\
& L^{2}_{0}(\Omega)=\{p\in L^{2}(\Omega):\int_{\Omega}p\mbox{d}\textbf{x}=0\},\\
&\textbf{H}_{\tau}^{1}( \Omega)=\{\textbf{b}\in H^{1}(\Omega)^{d}:  \textbf{n}\times \textbf{b}|_{\partial \Omega}=\mathbf{0}\}.
\end{align*}

It is well known that the embedding inequalities \cite{1975Sobolev}, i.e.,
\begin{equation}\label{4-e}
\begin{aligned}
&\|\phi\|_{L^{m}}\leq C\|\phi\|_{H^{1}}, \, \phi\in H^{1}(\Omega), \, 2\leqslant m\leqslant 6,\\
& \|\textbf{q}\| \leqslant C\|\nabla \textbf{q}\|, \, \textbf{q}\in \textbf{H}^{1}_{0}(\Omega),\\
& \|\textbf{x}\|_{H^{1}} \leqslant C\|\nabla \cdot \textbf{x}\|+C\|\nabla\times \textbf{x}\|, \, \textbf{x}\in \textbf{H}^{1}_{\tau}(\Omega),\\
&\|\textbf{v}\|_{ L^{m}} \leqslant C\|\textbf{v}\|_{H^{1} }, \,  \textbf{v}\in \textbf{H}_{0}^{1}(\Omega), \, 2\leqslant m\leqslant 6,\\
&\|\textbf{v}\|_{L^{3}} \leqslant C\|\textbf{v}\|^{1/2}\|\textbf{v}\|_{H^{1}}^{1/2}, \, \textbf{v} \in H^{1}(\Omega)^{d},
\end{aligned}
\end{equation}
where $C$ is a generic coefficient that is independent of $\Delta t$ and  different at different occurrences.

We realize that $\|\nabla\times \textbf{s}\|^{2}+\|\nabla\cdot \textbf{s}\|^{2}=\|\nabla \textbf{s}\|^{2}$,  for all $\textbf{s}\in\textbf{H}^{1}_{0}(\Omega) $, as shown in  \cite{2019Stability}. The trilinear form $b(\cdot,\cdot,\cdot)$ is defined for all $\textbf{v}, \textbf{q}, \textbf{s} \in \textbf{H}^{1}_{0}(\Omega)$  as  
\begin{equation*}
b(\textbf{v},\textbf{q},\textbf{s})=( (\textbf{v}\cdot\nabla ) \textbf{q},\textbf{s}), \quad  b(\textbf{s},\textbf{q},\textbf{q})=\textbf{0},
\end{equation*}
and the following inequalities hold 
\begin{equation}
b(\textbf{v},\textbf{q},\textbf{s})\leqslant
\left\{
\begin{aligned}
&C\|\textbf{v}\|_{H^{1}}\|\textbf{q}\|_{H^{1}}\|\textbf{s}\|_{H^{1}},\\
&C\|\textbf{v}\|\|\textbf{q}\|_{H^{2}}\|\textbf{s}\|_{H^{1}},\\
&C\|\textbf{v}\|_{H^{1}}\|\textbf{q}\|_{H^{2}}\|\textbf{s}\|.\\
\end{aligned}
\right.
\end{equation}

\begin{The} Assuming that the source term  $\textbf{f}$=$\textbf{0}$, the two-phase MHD model (\ref{2-1})-(\ref{2-2b-boundary}) follows the energy dissipation law
$$
\frac{d}{dt}E(\phi, \textbf{v}, \textbf{b}) \leq 0,
$$
where the energy function $E(\phi, \textbf{v}, \textbf{b})$ is given by
\begin{equation}
E(\phi, \textbf{v}, \textbf{b})=\frac{\varepsilon\lambda}{2} \|\nabla  \phi\|^{2}+\frac{1}{2} \|\textbf{v}\|^{2}+\frac{1}{2\mu} \|\textbf{b}\|^{2}+ \lambda \int_{\Omega}F(\phi)\mbox{d}\textbf{x}.
\end{equation}
\end{The}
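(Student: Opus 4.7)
The plan is to derive the energy identity by testing each of the governing equations against a suitable multiplier and summing the results so that the nonlinear coupling terms cancel pairwise. First, I would test \eqref{mdel1} with $\lambda w$ and \eqref{model2} with $\lambda \phi_{t}$. Integration by parts together with the Neumann boundary conditions on $\phi$ and $w$, and the chain-rule identity $(f(\phi),\phi_{t})=\frac{d}{dt}\int_{\Omega}F(\phi)\,d\textbf{x}$, combine into
\[
\frac{\lambda\varepsilon}{2}\frac{d}{dt}\|\nabla\phi\|^{2} + \lambda\frac{d}{dt}\int_{\Omega}F(\phi)\,d\textbf{x} - \lambda(\phi\textbf{v},\nabla w) + \lambda M\|\nabla w\|^{2} = 0.
\]

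Next, I would test the momentum equation \eqref{model3} with $\textbf{v}$ and the induction equation \eqref{model5} with $\textbf{b}/\mu$. The convection term vanishes through $b(\textbf{v},\textbf{v},\textbf{v})=0$, the pressure term drops using $\nabla\cdot\textbf{v}=0$ with $\textbf{v}|_{\partial\Omega}=\textbf{0}$, and repeated application of the vector triple product identity $\textbf{v}\cdot(\textbf{b}\times(\nabla\times\textbf{b}))=(\nabla\times\textbf{b})\cdot(\textbf{v}\times\textbf{b})$ produces
\[
\frac{1}{2}\frac{d}{dt}\|\textbf{v}\|^{2} + \nu\|\nabla\textbf{v}\|^{2} + \frac{1}{\mu}(\nabla\times\textbf{b},\textbf{v}\times\textbf{b}) + \lambda(\phi\nabla w,\textbf{v}) = 0,
\]
\[
\frac{1}{2\mu}\frac{d}{dt}\|\textbf{b}\|^{2} + \frac{1}{\sigma\mu^{2}}\|\nabla\times\textbf{b}\|^{2} - \frac{1}{\mu}(\textbf{v}\times\textbf{b},\nabla\times\textbf{b}) = 0.
\]

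Adding the three displayed identities, the phase--velocity coupling cancels because $(\phi\textbf{v},\nabla w)=(\phi\nabla w,\textbf{v})$, and the Lorentz/induction coupling cancels by inspection, yielding
\[
\frac{d}{dt}E(\phi,\textbf{v},\textbf{b}) + \lambda M\|\nabla w\|^{2} + \nu\|\nabla\textbf{v}\|^{2} + \frac{1}{\sigma\mu^{2}}\|\nabla\times\textbf{b}\|^{2} = 0,
\]
from which the dissipation inequality follows immediately. The routine but most delicate step is the cancellation within the magnetohydrodynamic coupling: it requires the boundary condition $\textbf{b}\times\textbf{n}|_{\partial\Omega}=\textbf{0}$ so that integration by parts applied to $(\nabla\times(\nabla\times\textbf{b}),\textbf{b})$ and $(\nabla\times(\textbf{v}\times\textbf{b}),\textbf{b})$ leaves no boundary contribution, and thus reduces these terms exactly to the dissipative form $\|\nabla\times\textbf{b}\|^{2}$ and to the inner product that matches the Lorentz term from the momentum equation with opposite sign.
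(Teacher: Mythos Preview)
Your proposal is correct and follows exactly the standard energy-estimate argument that the paper defers to \cite{2022Highly}: test each equation with the natural multiplier ($\lambda w$, $\lambda\phi_t$, $\textbf{v}$, $\textbf{b}/\mu$), use the boundary conditions to integrate by parts without residual terms, and observe the pairwise cancellations of the coupling terms. There is nothing to add; the paper itself omits the details and your write-up fills them in accurately.
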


\begin{proof}
%
A similar proof can be found in \cite{2022Highly}.
\end{proof}

\section{A Semi-Implicit Stabilization Method }

In this section, we primarily present the error estimates of the fully decoupled and semi-implicit stabilization scheme for solving the two-phase MHD model (\ref{2-1})-(\ref{2-2b-boundary}). We first recall the following semi-implicit stabilization scheme \cite{2022Highly}.

\subsection{The semi-implicit stabilization algorithm (Scheme I)}

Let $\Delta t> 0$ be the  time step size, and $T$=$k\Delta t$, where  $0\leq k\leq [\frac{T}{\Delta t}]$. Given the initial values ($\phi^{0}$, $\textbf{v}^{0}$, $p^{0}$, $\textbf{b}^{0}$), we compute  ($\phi^{k}$, $\textbf{v}^{k}$, $p^{k}$, $\textbf{b}^{k}$) from the following scheme. Besides, we denote $\delta A^{k}$=$A^{k}-A^{k-1}$ to simplify our notation.

\textbf{Step 1}: Compute   $\phi^{k}$ and $w^{k}$ from 
\begin{subequations}\label{s-1}
\begin{align}
&\frac{\phi^{k}- \phi^{k-1}}{\Delta t}+\nabla\cdot \left(\phi^{k-1} \textbf{v}^{k-1}\right)-\Delta t\lambda\nabla\cdot \left(\left(\phi^{k-1}\right)^{2}\nabla w^{k}\right)=M\Delta w^{k}, \label{3-1a}\\
&w^{k}=-\varepsilon \Delta\phi^{k}+f\left(\phi^{k-1}\right)+S\left(\phi^{k}- \phi^{k-1}\right), \label{3-1b} \\
&\frac{\partial\phi^{k}}{\partial \textbf{n}}|_{\partial\Omega}=0, \quad \frac{\partial w^{k}}{\partial \textbf{n}}|_{\partial\Omega}=0.
\end{align}
\end{subequations}

\textbf{Step 2}: Compute  $\textbf{b}^{k}$ from 
\begin{subequations}\label{s-2}
\begin{align}
&\frac{\textbf{b}^{k}- \textbf{b}^{k-1}}{\Delta t} +\frac{1}{\sigma\mu}\nabla\times\left(\nabla\times\textbf{b}^{k}\right)-\nabla\times \left(\textbf{v}_{\star}^{k-1}\times\textbf{b}^{k-1}\right)=\textbf{0}, \label{3-2a} \\
&\frac{\textbf{v}_{\star}^{k-1}- \textbf{v}^{k-1}}{\Delta t}+\frac{1}{\mu}\textbf{b}^{k-1}\times \nabla\times\textbf{b}^{k}=\textbf{0}, \label{3-2b} \\
&\textbf{b}^{k}\times \textbf{n}|_{\partial\Omega}=\textbf{0}.
\end{align}
\end{subequations}

\textbf{Step 3}: Compute  $\tilde{\textbf{v}}^{k}$ from 
\begin{subequations}\label{s-3}
\begin{align}
&\frac{\tilde{\textbf{v}}^{k}- \textbf{v}_{\star}^{k-1}}{\Delta t}+\left(\textbf{v}^{k-1}\cdot \nabla\right) \tilde{\textbf{v}}^{k}-\nu \Delta \tilde{\textbf{v}}^{k} +\nabla p^{k-1}+ \lambda \phi^{k-1}\nabla w^{k}=\textbf{f}^{k}, \label{3-3a} \\
&\tilde{\textbf{v}}^{k}|_{\partial\Omega}=\textbf{0}.
\end{align}
\end{subequations}

\textbf{Step 4}: Compute  $\textbf{v}^{k}$ from 
\begin{subequations}\label{s-4}
\begin{align}
&\frac{ \textbf{v}^{k}-\tilde{\textbf{v}}^{k}}{\Delta t}+ \nabla p^{k}-\nabla p^{k-1}=\textbf{0},\label{3-4a}  \\
&\nabla\cdot \textbf{v}^{k}=0, \\
&\textbf{v}^{k}\cdot \textbf{n}|_{\partial\Omega}= 0.
\end{align}
\end{subequations}

{\Rem \label{3-1} We state that all variables are fully decoupled and provide the following explanations.

 (i) By linking with the   equations  (\ref{3-1a})-(\ref{3-1b}), we obtain the following equation 
\begin{equation*}\label{remark3-1}
\begin{aligned}
\frac{\delta\phi^{k}}{\Delta t}+\nabla\cdot \left(\phi^{k-1} \textbf{v}^{k-1}\right)-\Delta t\lambda\nabla\cdot\left(\left(\phi^{k-1}\right)^{2}\nabla \left(-\varepsilon \Delta\phi^{k}+f\left(\phi^{k-1}\right)+S\left(\delta\phi^{k}\right)\right)\right)=M\Delta w^{k}.\\
\end{aligned}
\end{equation*}

(ii) Then, we obtain
$ \textbf{v}_{\star}^{k-1}= \textbf{v}^{k-1}-  \frac{\Delta t}{\mu}\textbf{b}^{k-1}\times \nabla\times\textbf{b}^{k}$  from  (\ref{3-2b}).  Substituting this into  (\ref{3-2a}), we have 
\begin{equation*}
\frac{\delta\textbf{b}^{k} }{\Delta t} +\frac{1}{\sigma\mu}\nabla\times\left(\nabla\times\textbf{b}^{k}\right)-\nabla\times \left(\left(\textbf{v}^{k-1}-  \frac{\Delta t}{\mu}\textbf{b}^{k-1}\times \nabla\times\textbf{b}^{k}\right)\times\textbf{b}^{k-1}\right)=\textbf{0}.
\end{equation*}

(iii) Additionally, by taking the divergence of equation (\ref{3-4a}),  we obtain 
\begin{equation*}
\Delta p^{k}= \frac{\nabla\cdot \tilde{\textbf{v}}^{k}}{ \Delta t} + \Delta p^{k-1},
\end{equation*}
and we can update $\textbf{v}^{k}$ using
$\textbf{v}^{k}= \tilde{\textbf{v}}^{k}-\Delta t \left(\delta\nabla p^{k} \right)$. Obviously, the coupled, nonlinear, and saddle point type model has been decomposed into a series
of smaller elliptic type problems.
}

{\Rem \label{3-2}

The first-order  stabilized term $\Delta t\lambda\nabla\cdot \left(\left(\phi^{k-1}\right)^{2}\nabla w^{k}\right)$ is introduced to enhance the stability of the fully explicit term $\nabla\cdot \left(\phi^{k-1} \textbf{v}^{k-1}\right)$. Additionally,  the first-order term $S\left( \phi^{k}- \phi^{k-1} \right)$ is also presented to stabilize $f(\phi^{k-1})$ in the scheme, where $S>0$ is a stability parameter. This stabilizer is critical to maintain the accuracy and improve the energy stability while using large time steps \cite{2021EfficientEF}. 
}

%
%

\begin{The}\label{Theorem3-1}
For the source term $\textbf{f}$=$ \mathbf{0}$, if $S\geq\frac{C_{1}}{2}$, the \textbf{ Scheme I} is unconditionally energy stable in the  sense that 
$$E^{k}-E^{k-1} \leq 0,$$
where
\begin{equation}\label{3-0}
E^{k}=\frac{\lambda\varepsilon}{2}\left\|\nabla\phi^{k}\right\|^{2}+ \lambda \int_{\Omega}F\left(\phi^{k}\right)\mbox{d}\textbf{x}+\frac{1}{2\mu}\left\|\textbf{b}^{k}\right\|^{2}+ \frac{1}{2}\left\|\textbf{v}^{k}\right\|^{2} +\frac{\Delta t^{2}}{2}\left\|\nabla p^{k}\right\|^{2}.
\end{equation}
\end{The}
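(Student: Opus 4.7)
The plan is to derive the discrete energy identity by testing each substep with an appropriate ``discrete gradient'' of the energy, and then to sum the resulting identities so that the cross-coupling terms either cancel exactly or are absorbed into the dissipation produced by the stabilizers and the operator-splitting differences $\textbf{v}_{\star}^{k-1}-\textbf{v}^{k-1}$ and $\tilde{\textbf{v}}^{k}-\textbf{v}_{\star}^{k-1}$.

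First I would handle the phase block. Testing \eqref{3-1a} with $\Delta t\,\lambda w^{k}$ and integrating by parts (using the Neumann data for $\phi^{k}$ and $w^{k}$) produces $\lambda(\delta\phi^{k},w^{k})$ together with the stabilizer contribution $\Delta t^{2}\lambda^{2}\|\phi^{k-1}\nabla w^{k}\|^{2}$ and the mobility dissipation $M\Delta t\lambda\|\nabla w^{k}\|^{2}$, and produces as a single surviving cross term $\Delta t\lambda(\phi^{k-1}\textbf{v}^{k-1},\nabla w^{k})$. Testing \eqref{3-1b} with $\lambda\,\delta\phi^{k}$ I obtain the gradient telescoping $\frac{\lambda\varepsilon}{2}(\|\nabla\phi^{k}\|^{2}-\|\nabla\phi^{k-1}\|^{2}+\|\nabla\delta\phi^{k}\|^{2})$ plus the stabilizer $\lambda S\|\delta\phi^{k}\|^{2}$ and the explicit term $\lambda(f(\phi^{k-1}),\delta\phi^{k})$. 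A Taylor expansion of $F$ about $\phi^{k-1}$ together with the assumption $|F''|\leq C_{1}$ converts the latter into $\lambda\int_{\Omega}(F(\phi^{k})-F(\phi^{k-1}))\,d\textbf{x}$ plus a remainder bounded by $\frac{\lambda C_{1}}{2}\|\delta\phi^{k}\|^{2}$, which the choice $S\geq C_{1}/2$ renders harmless.

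Next I would handle the MHD and velocity blocks. Taking the inner product of \eqref{3-2a} with $\frac{\Delta t}{\mu}\textbf{b}^{k}$ and of \eqref{3-2b} with $\Delta t\,\textbf{v}_{\star}^{k-1}$ and adding, I expect the Lorentz coupling $\frac{\Delta t}{\mu}(\textbf{b}^{k-1}\times\nabla\times\textbf{b}^{k},\textbf{v}_{\star}^{k-1})$ (obtained from $(\textbf{a}\times\textbf{b})\cdot\textbf{c}=\textbf{a}\cdot(\textbf{b}\times\textbf{c})$) to appear with opposite signs and cancel, leaving the magnetic energy telescoping $\frac{1}{2\mu}(\|\textbf{b}^{k}\|^{2}-\|\textbf{b}^{k-1}\|^{2}+\|\delta\textbf{b}^{k}\|^{2})$, the curl dissipation $\frac{\Delta t}{\sigma\mu^{2}}\|\nabla\times\textbf{b}^{k}\|^{2}$ and the splitting quantity $\tfrac{1}{2}\|\textbf{v}_{\star}^{k-1}-\textbf{v}^{k-1}\|^{2}$. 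Testing \eqref{3-3a} with $\Delta t\,\tilde{\textbf{v}}^{k}$ kills the convective trilinear form because $\textbf{v}^{k-1}$ is divergence-free with $\textbf{v}^{k-1}\cdot\textbf{n}=0$ and $\tilde{\textbf{v}}^{k}$ vanishes on $\partial\Omega$. Finally, rewriting \eqref{3-4a} as $\textbf{v}^{k}+\Delta t\nabla p^{k}=\tilde{\textbf{v}}^{k}+\Delta t\nabla p^{k-1}$ and squaring in $L^{2}$ (using $(\textbf{v}^{k},\nabla p^{k})=0$) I obtain the pressure telescoping $\frac{\Delta t^{2}}{2}(\|\nabla p^{k}\|^{2}-\|\nabla p^{k-1}\|^{2})$ together with a term $\Delta t(\tilde{\textbf{v}}^{k},\nabla p^{k-1})$ that cancels exactly the same term coming from the pressure gradient in the momentum step.

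The main obstacle, as I see it, is that after summing the only residual cross term is $\Delta t\lambda(\phi^{k-1}\nabla w^{k},\textbf{v}^{k-1}-\tilde{\textbf{v}}^{k})$: the phase equation couples to the old velocity $\textbf{v}^{k-1}$ while the momentum equation couples to the intermediate $\tilde{\textbf{v}}^{k}$, so no automatic cancellation occurs as in the Lorentz block. To close the argument I would telescope $\tilde{\textbf{v}}^{k}-\textbf{v}^{k-1}=(\tilde{\textbf{v}}^{k}-\textbf{v}_{\star}^{k-1})+(\textbf{v}_{\star}^{k-1}-\textbf{v}^{k-1})$ and apply Young's inequality twice to produce an upper bound of the form $\Delta t^{2}\lambda^{2}\|\phi^{k-1}\nabla w^{k}\|^{2}+\tfrac{1}{2}\|\tilde{\textbf{v}}^{k}-\textbf{v}_{\star}^{k-1}\|^{2}+\tfrac{1}{2}\|\textbf{v}_{\star}^{k-1}-\textbf{v}^{k-1}\|^{2}$. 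All three of these positive contributions are already on the left-hand side, the first manufactured by the phase-field stabilizer $\Delta t\lambda\nabla\cdot((\phi^{k-1})^{2}\nabla w^{k})$ and the other two by the two splitting increments, so they are absorbed and the net inequality reduces to $E^{k}-E^{k-1}\leq 0$ plus strictly nonnegative dissipation, proving the claim under $S\geq C_{1}/2$.
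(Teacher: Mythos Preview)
Your proposal is correct and follows essentially the same approach as the paper: the same test functions are chosen in each substep, the Lorentz and pressure couplings cancel exactly as you describe, the Taylor expansion of $F$ is used identically, and the sole residual cross term $\Delta t\lambda(\phi^{k-1}\nabla w^{k},\textbf{v}^{k-1}-\tilde{\textbf{v}}^{k})$ is absorbed by the phase-field stabilizer $\Delta t^{2}\lambda^{2}\|\phi^{k-1}\nabla w^{k}\|^{2}$ together with the two splitting increments. The only cosmetic difference is that the paper first applies Young's inequality to get $\tfrac{1}{4}\|\tilde{\textbf{v}}^{k}-\textbf{v}^{k-1}\|^{2}$ and then bounds this via the telescoping splitting, whereas you split first and apply Young's inequality twice; both routes yield the same absorption and the same final inequality.
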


\begin{proof}
By taking the $L^{2}$ inner product of  equation (\ref{3-1a}) with $\lambda\Delta tw^{k}$, we get 
\begin{equation}\label{3-6}
\lambda \left(\delta\phi^{k}, w^{k}\right)-\lambda\Delta t\left(\phi^{k-1}\textbf{v}^{k-1}, \nabla w^{k}\right)+\Delta t^{2}\lambda^{2}\left\|\phi^{k-1}\nabla w^{k}\right\|^{2}=-\lambda\Delta tM\left\|\nabla w^{k}\right\|^{2}.
\end{equation}

Taking the $L^{2}$ inner product of equation (\ref{3-1b}) with $\lambda\left(\delta\phi^{k} \right)$,  we have 
\begin{equation}\label{3-7}
\lambda\varepsilon \left(\delta(\nabla\phi^{k}),  \nabla\phi^{k}\right)+\lambda S\left\|\delta\phi^{k}\right\|^{2}+\lambda\left(f\left(\phi^{k-1}\right), \delta\phi^{k}\right)=\lambda \left(\delta\phi^{k}, w^{k}\right).
\end{equation}

Taking the $L^{2}$ inner product of equation  (\ref{3-2a})  with $\frac{\Delta t}{\mu}\textbf{b}^{k}$, we obtain 
\begin{equation}\label{3-8}
\frac{1}{2\mu}\left(\delta\left\|\textbf{b}^{k}\right\|^{2}+\left\|\delta\textbf{b}^{k}\right\|^{2}\right)+\frac{\Delta t}{\sigma\mu^{2}}\left\|\nabla\times\textbf{b}^{k}\right\|^{2}-\frac{\Delta t}{ \mu}\left(\textbf{v}_{\star}^{k-1}\times\textbf{b}^{k-1}, \nabla\times\textbf{b}^{k}\right)=0.
\end{equation}

Taking the $L^{2}$ inner product of equation (\ref{3-2b})  with $\Delta t\textbf{v}_{\star}^{k-1}$, we derive 
\begin{equation}\label{3-9}
\frac{1}{2}\left(\left\|\textbf{v}_{\star}^{k-1}\right\|^{2}-\left\|\textbf{v}^{k-1}\right\|^{2}+\left\|\textbf{v}_{\star}^{k-1}-\textbf{v}^{k-1}\right\|^{2}\right)+\frac{ \Delta t}{\mu}\left(\textbf{v}_{\star}^{k-1}\times\textbf{b}^{k-1},  \nabla\times\textbf{b}^{k} \right)=0.
\end{equation}

Taking the $L^{2}$ inner product of equation (\ref{s-3}) with  $\Delta t \tilde{\textbf{v}}^{k}$, we have 
\begin{equation}\label{3-10}
\frac{1}{2}\left(\left\|\tilde{\textbf{v}}^{k}\right\|^{2}-\left\|\textbf{v}_{\star}^{k-1}\right\|^{2}+\left\|\tilde{\textbf{v}}^{k}-\textbf{v}_{\star}^{k-1}\right\|^{2}\right)+\Delta t\nu \left\|\nabla\tilde{\textbf{v}}^{k}\right\|^{2}+\Delta t \left(\nabla p^{k-1}, \tilde{\textbf{v}}^{k}\right)+\Delta t \lambda \left(\phi^{k-1}\nabla w^{k}, \tilde{\textbf{v}}^{k}\right)=0.
\end{equation}

In fact, the formula (\ref{3-4a}) can be reconstructed as
\begin{equation*}
 \textbf{v}^{k}+\Delta t\nabla p^{k}=\tilde{\textbf{v}}^{k} +  \Delta t\nabla p^{k-1},
\end{equation*}
then taking the $L^{2}$ inner product of the above equation with itself, we get 
\begin{equation}\label{3-11}
\frac{1}{2}\left\|\textbf{v}^{k}\right\|^{2}+\frac{\Delta t^{2}}{2}\left\|\nabla p^{k}\right\|^{2}= \frac{1}{2}\left\|\tilde{\textbf{v}}^{k}\right\|^{2}+\frac{\Delta t^{2}}{2}\left\|\nabla p^{k-1}\right\|^{2}+ \Delta t \left( \tilde{\textbf{v}}^{k}, \nabla p^{k-1}\right).
\end{equation}

We consider the following estimate 
\begin{equation}\label{3-12}
\begin{aligned}
\Delta t \lambda \left(\phi^{k-1}\nabla w^{k}, \tilde{\textbf{v}}^{k}\right)-\Delta t\lambda\left(\phi^{k-1}\textbf{v}^{k-1}, \nabla w^{k}\right)&\leq \frac{1}{4}\left\|\tilde{\textbf{v}}^{k}-\textbf{v}^{k-1}\right\|^{2}+ \Delta t^{2} \lambda^{2} \left\|\phi^{k-1}\nabla w^{k}\right\|^{2},\\
\frac{1}{2}\left\|\tilde{\textbf{v}}^{k}-\textbf{v}^{k-1}\right\|^{2}&\leq\frac{1}{2}\left\|\textbf{v}_{\star}^{k-1}-\textbf{v}^{k-1}\right\|^{2}+\frac{1}{2}\left\|\tilde{\textbf{v}}^{k}-\textbf{v}_{\star}^{k-1}\right\|^{2}.\\
\end{aligned}
\end{equation}

According to Taylor expansion, we have 
\begin{equation*}
\begin{aligned}
\delta F(\phi^{k})& =f\left(\phi^{k-1}\right)\delta\phi^{k}+\frac{1}{2}f'\left(\xi\right)\left(\delta\phi^{k}\right)^{2},\\
\left(\frac{\delta F\left(\phi^{k}\right) }{\Delta t},1\right)&=\left(f\left(\phi^{k-1}\right),\frac{\delta\phi^{k}}{\Delta t}\right) +\frac{1}{2\Delta t}\left(f'\left(\xi\right),  \left(\delta\phi^{k}\right)^{2}\right)\\
&\leq\left(f\left(\phi^{k-1}\right),\frac{\delta\phi^{k}}{\Delta t}\right)+\frac{C_{1}}{2\Delta t}\left\|\delta\phi^{k}\right\|^{2}, \quad \text{where } \phi^{k-1} < \xi < \phi^{k}.
\end{aligned}
\end{equation*}
Thus, we obtain 
\begin{equation}\label{3-13}
\lambda \left(f\left(\phi^{k-1}\right), \delta\phi^{k}\right)\geq \lambda \left( \delta F\left(\phi^{k}\right),1 \right) - \frac{  C_{1}}{2}\lambda \left\|\delta\phi^{k}\right\|^{2}.
\end{equation}

By combining equations (\ref{3-6})-(\ref{3-13}), we derive 
\begin{equation}\label{3-14}
\begin{aligned}
&\frac{\lambda\varepsilon}{2}\delta\left\|\nabla\phi^{k}\right\|^{2}+\frac{1}{2\mu}\delta\left\|\textbf{b}^{k}\right\|^{2}+ \frac{1}{2}\delta\left\|\textbf{v}^{k}\right\|^{2}+ \lambda\left(\delta F\left(\phi^{k}\right),1\right)+\frac{\Delta t^{2}}{2}\delta\left\|\nabla p^{k}\right\|^{2} +\lambda\Delta tM\left\|\nabla w^{k}\right\|^{2}+\frac{\lambda\varepsilon}{2}\left\|\delta(\nabla\phi^{k})\right\|^{2}\\
&\qquad \qquad +\left( \lambda S-\frac{\lambda C_{1}}{2}\right)\left\|\delta\phi^{k}\right\|^{2}+\frac{1}{2\mu}\left\|\delta\textbf{b}^{k}\right\|^{2}+\frac{ \Delta t}{\sigma\mu^{2}}\left\|\nabla\times\textbf{b}^{k}\right\|^{2}+\frac{1}{4}\left\|\tilde{\textbf{v}}^{k}-\textbf{v}^{k-1}\right\|^{2}+ \nu\Delta t\left\|\nabla\tilde{\textbf{v}}^{k}\right\|^{2} \leq 0.
\end{aligned}
\end{equation}

Thus, we can demonstrate that the \textbf{Scheme I} is  unconditionally energy stable 
\begin{equation*}
E^{k}-E^{k-1} \leq 0,
\end{equation*}
where $E^{k}$ is defined by equation (\ref{3-0}).
\end{proof}

{\Rem \label{3-3} By applying \textbf{Theorem \ref{Theorem3-1}} and summing up  equation (\ref{3-14}) from \(k=0\) to \(m\) (\(m \leq \frac{T}{\Delta t}\)), we get the stable bound as
\begin{equation*}
\begin{aligned}
&\frac{\lambda\varepsilon}{2}\left\|\nabla\phi^{m}\right\|^{2}+\frac{1}{2\mu}\left\|\textbf{b}^{m}\right\|^{2}+ \frac{1}{2}\left\|\textbf{v}^{m}\right\|^{2} + \lambda\left(F\left(\phi^{m}\right),1\right)+\frac{\Delta t^{2}}{2}\left\|\nabla p^{m}\right\|^{2}\\
&+\mathop{\sum}\limits_{k=0}^{m}  \left[  \lambda\Delta tM\left\|\nabla w^{k}\right\|^{2}+\frac{\lambda\varepsilon}{2}\left\|\delta(\nabla\phi^{k})\right\|^{2}+\left( \lambda S-\frac{\lambda C_{1}}{2}\right)\left\|\delta\phi^{k}\right\|^{2} +\frac{1}{2\mu}\left\|\delta\textbf{b}^{k}\right\|^{2}+\frac{ \Delta t}{\sigma\mu^{2}}\left\|\nabla\times\textbf{b}^{k}\right\|^{2}+ \nu\Delta t\left\|\nabla\tilde{\textbf{v}}^{k}\right\|^{2} +\frac{1}{4}\left\|\tilde{\textbf{v}}^{k}-\textbf{v}^{k-1}\right\|^{2} \right] \\
&\leq\frac{\lambda\varepsilon}{2}\left\|\nabla\phi^{0}\right\|^{2}+\frac{1}{2\mu}\left\|\textbf{b}^{0}\right\|^{2}+ \frac{1}{2}\left\|\textbf{v}^{0}\right\|^{2}+ \lambda\left(F\left(\phi^{0}\right),1\right)+\frac{\Delta t^{2}}{2}\left\|\nabla p^{0}\right\|^{2}\\
&\leq C_{2},
\end{aligned}
\end{equation*}
where $C_{2}$ is a general positive constant. From above inequality, we derive 
\begin{equation}\label{ieq8}
\sum_{k=0}^{m} \lambda\Delta tM \left\|\nabla w^{k}\right\|^{2}\leq C_{2},\quad  \sum_{k=0}^{m}\frac{ \Delta t}{\sigma\mu^{2}} \left\|\nabla\times\textbf{b}^{k}\right\|^{2}\leq C_{2}.
\end{equation}
}
\subsection{Convergence analysis}
In this subsection, we derive the convergence results of the proposed semi-implicit stabilization algorithm.  We define $f\lesssim g$ to mean that there exists  a generic positive constant C such that $f\leq Cg$. We shall use repeatedly the following discrete Gronwall inequality \cite{1990Finite}.

{\lem \label{L4-1}
Let $d_{0}, \alpha_{k}, \beta_{k}, \eta_{k}$ and $\kappa_{k}$ be a sequence of nonnegative numbers for integers $k\geq 0$ such that
\begin{equation*}\alpha_{k}+\Delta t\sum\limits_{n=0}^{k}\beta_{n} \leqslant\Delta t\sum\limits_{n=0}^{k}\kappa_{n}\alpha_{n}+\Delta t\sum\limits_{n=0}^{k}\eta_{n}+d_{0},
\end{equation*}
assume that $\kappa_{n}\Delta t \leq 1$ for all $n$, and set $\zeta_{n}=(1-\kappa_{n}\Delta t)^{-1}$. Then,  for all $k\geqslant 0$,
\begin{equation*}
\alpha_{k}+\Delta t\sum\limits_{n=0}^{k}\beta_{n} \leqslant \exp\left(\Delta t\sum\limits_{n=0}^{k}\zeta_{n}\kappa_{n}\right)\left(\Delta t\sum\limits_{n=0}^{k}\eta_{n}+d_{0}\right).
\end{equation*}
}
Based on \cite{2019FullyZGD}, we employ the following lemma.
{\lem \label{lem2}
Let $c_{\alpha}, c_{\beta}, c_{\gamma}$ be nonnegative numbers, $a_{k}$  be a sequence of nonnegative numbers for $k\geq0$, such that
\begin{equation*}
a_{k}\leq c_{\alpha} +c_{\beta}\Delta t a_{k-1}+c_{\gamma}\Delta t^{2}a_{k-1}^{2}.
\end{equation*}
If $\max\left\{c_{\beta},\sqrt{c_{\gamma}}\right\}D\Delta t\leq 1$, then, for $k\geq 0$,
\begin{equation*}
a_{k-1}\leq D,
\end{equation*}
where D=$\max\left\{a_{0}, c_{\alpha}\right\}+2$.
}

First, we rewrite the equations (\ref{2-1}) as  
\begin{subequations}\label{4-2}
\begin{align}
&\frac{\delta\phi(t_{k})}{\Delta t}+\nabla\cdot\left(\phi(t_{k-1}) \textbf{v}(t_{k-1})\right)-\Delta t\lambda\nabla\cdot \left(\phi(t_{k-1})^{2}\nabla w(t_{k})\right)
=M \Delta w(t_{k})+R_{a}^{k} , \label{3-15a}\\
&-w(t_{k})-\varepsilon \Delta\phi(t_{k})+f\left(\phi(t_{k-1})\right)+S\left(\delta\phi(t_{k}) \right) =R_{b}^{k} ,\label{3-15b}\\
&\frac{\delta\textbf{b}(t_{k})}{\Delta t}+\frac{1}{\sigma\mu}\nabla\times(\nabla\times\textbf{b}(t_{k} ))-\nabla\times(\textbf{v}(t_{k-1})\times\textbf{b}(t_{k-1}))=R_{c}^{k},\label{3-15c}\\
&\frac{\delta\textbf{v}(t_{k})}{\Delta t}-\nu\Delta\textbf{v}(t_{k})+ \left(\textbf{v}(t_{k-1})\cdot\nabla\right) \textbf{v}(t_{k})+\nabla p(t_{k-1}) +\frac{1}{\mu}\textbf{b}(t_{k-1})\times\nabla\times\textbf{b}(t_{k})+ \lambda\phi(t_{k-1} ) \nabla w(t_{k})=R_{d}^{k}, \label{3-15d}\\
&\frac{\textbf{v}(t_{k})-\textbf{v}(t_{k})}{\Delta t}+\delta(\nabla p(t_{k}))=R_{e}^{k},\label{3-15e}
\end{align}
\end{subequations}
where  $R_{a}^{k}, R_{b}^{k}, R_{c}^{k}, R_{d}^{k}, R_{e}^{k}$ are truncation errors defined by 
\begin{equation}
\left\{
\begin{aligned}
&R_{a}^{k} =\frac{\delta\phi(t_{k}) }{\Delta t}-\phi_{t}(t_{k})-\delta(\nabla\cdot\left(\phi(t_{k}) \textbf{v}(t_{k})\right))-\Delta t\lambda\nabla\cdot \left(\phi(t_{k-1})^{2}\nabla w(t_{k})\right),\\
&R_{b}^{k} =-\delta f\left(\phi(t_{k})\right)+S\left(\delta\phi(t_{k}) \right),\\
&R_{c}^{k}=\frac{\delta\textbf{b}(t_{k})}{\Delta t}-\textbf{b}_{t}(t_{k})+\delta(\nabla\times\left(\textbf{v}(t_{k})\times\textbf{b}(t_{k})\right)),\\
&R_{d}^{k}=\frac{\delta\textbf{v}(t_{k})}{\Delta t}-\textbf{v}_{t}(t_{k})- (\delta\textbf{v}(t_{k})\cdot\nabla)\textbf{v}(t_{k}) -\delta(\nabla  p(t_{k}))-\frac{1}{\mu}\left(\delta\textbf{b}(t_{k}))\times\nabla\times\textbf{b}(t_{k})- \lambda(\delta\phi(t_{k} )\right) \nabla w(t_{k}),\\
&R_{e}^{k}=\delta(\nabla p(t_{k})).
\end{aligned}
\right.
\end{equation}
{\Ass \label{ass1}
We assume that the solution ($\phi, \textbf{v}, p, \textbf{b} $) of the continuous problem (\ref{2-1})-(\ref{2-2b-boundary}) satisfies the following regularity assumption 
\begin{equation}
\left\{
\begin{aligned}
&\phi, \phi_{t}, \phi_{tt}\in L^{\infty}(0,T,H^{2}(\Omega)),\quad  w\in L^{\infty}(0,T,H^{2}(\Omega)), \quad w_{t } \in L^{\infty}(0,T, H^{1}(\Omega)),\\
&\textbf{v}\in L^{\infty}(0,T,H^{2}(\Omega)), \quad \textbf{b}\in L^{\infty}(0,T,H^{2}(\Omega)),\quad p\in L^{\infty}(0,T,H^{1}(\Omega)).\\
\end{aligned}
\right.
\end{equation}}

One can easily establish the following estimates for the truncation errors, assuming that the exact solutions are sufficiently smooth or satisfy the above assumptions.
{\lem \label{lemma3-3}
Under the \textbf{Assumption \ref{ass1}}, the truncation errors satisfy
\begin{equation}
\begin{aligned}
\left\|R_{a}^{k}\right\|+\left\|R_{b}^{k}\right\|+\left\|R_{c}^{k}\right\|+\left\|R_{d}^{k}\right\|+\left\|R_{e}^{k}\right\|\lesssim \Delta t, \quad  0  \leq k  \leq \frac{T}{\Delta t}.
\end{aligned}
\end{equation}
}
To derive the error estimates, we denote the error functions as
\begin{equation}
\left\{
\begin{aligned}
&e_{\phi}^{k}=\phi(t_{k})-\phi^{k},\quad e_{w}^{k}=w(t_{k})-w^{k},\quad e_{u}^{k}=\textbf{v}(t_{k})-\textbf{v}^{k},\quad e_{b}^{k}=\textbf{b}(t_{k})-\textbf{b}^{k},\\
&e_{p}^{k}=p(t_{k})-p^{k},\quad \tilde{e}_{u}^{k}=\textbf{v}(t_{k})-\tilde{\textbf{v}}^{k},\quad G_{\phi}^{k}=f(\phi(t_{k}))-f(\phi^{k}).
\end{aligned}
\right.
\end{equation}

By subtracting (\ref{3-1a}) from (\ref{3-15a}), (\ref{3-1b}) from (\ref{3-15b}), (\ref{3-2a}) from (\ref{3-15c}),  (\ref{3-3a}) and (\ref{3-2b})  jointly from (\ref{3-15d}),  (\ref{3-4a}) from (\ref{3-15e}),  we obtain the error equations as follows 
\begin{subequations}\label{4-7}
\begin{align}
&\frac{\delta e_{\phi}^{k} }{\Delta t} +\nabla\cdot \left(\phi(t_{k-1})\textbf{v}(t_{k-1})-\phi^{k-1}\textbf{v}^{k-1}\right)-\Delta t\lambda\nabla\cdot   \left(\phi(t_{k-1})^{2}\nabla w(t_{k})-(\phi^{k-1})^{2}\nabla w^{k}\right)=M\Delta e_{w}^{k}+R_{a}^{k}, \label{3-20a} \\
&-e_{w}^{k}-\varepsilon\Delta e_{\phi}^{k}+G_{\phi}^{k-1}+S\left(\delta e_{\phi}^{k} \right)=R_{b}^{k}, \label{3-20b}\\
&\frac{\delta e_{b}^{k} }{\Delta t}+\frac{1}{\sigma\mu}\nabla\times\left(\nabla\times e_{b}^{k}\right)-\nabla\times \left(\textbf{v}(t_{k-1})\times\textbf{b}(t_{k-1})\right)+\nabla\times\left(\textbf{v}_{\star}^{k-1}\times\textbf{b}^{k-1}\right)=R_{c}^{k},\label{3-20c}\\
&\frac{\tilde{e}_{u}^{k}-e_{u}^{k-1}}{\Delta t}-\nu\Delta\tilde{e}_{u}^{k}+\left(\textbf{v}(t_{k-1})\cdot\nabla\right)\textbf{v}(t_{k})-\left(\textbf{v}^{k-1}\cdot\nabla\right)\tilde{\textbf{v}}^{k}+\nabla e_{p}^{k-1}+\frac{1}{\mu}\textbf{b}(t_{k-1})\times\nabla\times\textbf{b}(t_{k})\nonumber\\
&\qquad -\frac{1}{\mu}\textbf{b}^{k-1}\times\nabla\times\textbf{b}^{k}+\lambda\phi(t_{k-1})\nabla w(t_{k})-\lambda\phi^{k-1}\nabla w^{k}=R_{d}^{k}, \label{3-20d}\\
&\frac{e_{u}^{k}-\tilde{e}_{u}^{k}}{\Delta t}+\delta(\nabla e_{p}^{k})=R_{e}^{k}. \label{3-20e}
\end{align}
\end{subequations}

We consider the following $L^{\infty}$ stabilities of $\phi^{k}$ and $\textbf{b}^{k}$, which play a key role in the error estimates.
{\lem \label{lem3-4}
Under  the \textbf{Assumption \ref{ass1}}, there exists a constant C such that  if $\Delta t\leq C$,
the solution $\phi^{k}$ and $\textbf{b}^{k}$ of the semi-implicit stabilization scheme   satisfy
\begin{equation}\label{lemma3-4}
\left\|\phi^{k}\right\|_{L^{\infty}}\leq \kappa_{\phi}, \quad \left\|\textbf{b}^{k}\right\|_{L^{\infty}}\leq \kappa_{b}, \quad k=0, 1, \cdots, \frac{T}{\Delta t}.
\end{equation}
}
\begin{proof}
Using the mathematical induction method, we prove this lemma in the following steps.

\textbf{Step i}. When $k$=0,  we have
$$\left\|\phi^{0}\right\|_{L^{\infty}}=\left\|\phi (t_{0})\right\|_{L^{\infty}}\leq \kappa_{\phi_{1}},\quad   \left\|\textbf{b}^{0}\right\|_{L^{\infty}}=\left\|\textbf{b}(t_{0})\right\|_{L^{\infty}}\leq \kappa_{b_{1}}.$$

Then, we assume that  $\left\|\phi^{k-1}\right\|_{L^{\infty}}\leq \kappa_{\phi_{2}}$ and   $\left\|\textbf{b}^{k-1}\right\|_{L^{\infty}}\leq \kappa_{b_{2}}$ are established. Next, we provide the proof that  $\left\|\phi^{k}\right\|_{L^{\infty}}\leq \kappa_{\phi_{3}}$ and $\left\|\textbf{b}^{k}\right\|_{L^{\infty}}\leq \kappa_{b_{3}}$, where   $\kappa_{\phi_{1}}, \kappa_{b_{1}}, \kappa_{\phi_{2}}, \kappa_{b_{2}}, \kappa_{\phi_{3}}, \kappa_{b_{3}}$ are general positive constants.

Taking the $L^{2}$ inner product of equation (\ref{3-20a}) with $ \lambda\Delta te_{w}^{k}$ and $ \Delta t\varepsilon e_{\phi}^{k}$, we have 
\begin{equation}\label{4-12}
\left\{
\begin{aligned}
& \lambda\left(\delta e_{\phi}^{k} ,  e_{w}^{k}\right)- \lambda\Delta t\left(\phi(t_{k-1})\textbf{v}(t_{k-1})-\phi^{k-1}\textbf{v}^{k-1},  \nabla e_{w}^{k}\right)\\
& \quad + \Delta t^{2}\lambda^{2}\left(\phi(t_{k-1})^{2}\nabla w(t_{k})- (\phi^{k-1})^{2}\nabla w^{k}, \nabla e_{w}^{k}\right)+ \lambda M\Delta t\left\|\nabla e_{w}^{k}\right\|^{2}= \Delta t\lambda \left(R_{a}^{k}, e_{w}^{k}\right),\\
& \varepsilon\left(\delta e_{\phi}^{k} , e_{\phi}^{k} \right)- \varepsilon\Delta t\left( \phi(t_{k-1})\textbf{v}(t_{k-1})-\phi^{k-1}\textbf{v}^{k-1},  \nabla e_{\phi}^{k} \right)\\
&\quad + \Delta t^{2}\lambda \varepsilon\left(\phi(t_{k-1})^{2}\nabla w(t_{k})- (\phi^{k-1})^{2}\nabla w^{k}, \nabla e_{\phi}^{k}\right)+  M\varepsilon\Delta t\left(\nabla e_{w}^{k}, \nabla e_{\phi}^{k}  \right)= \varepsilon\Delta t\left(R_{a}^{k}, e_{\phi}^{k}  \right).
\end{aligned}
\right.
\end{equation}

Taking the $L^{2}$ inner product of equation (\ref{3-20b}) with $ \lambda\left(\delta e_{\phi}^{k} \right )$ and $ \Delta tMe_{w}^{k}$,  we obtain 
\begin{equation}\label{4-13}
\left\{
\begin{aligned}
&- \lambda\left(\delta e_{\phi}^{k} ,  e_{w}^{k}\right)+\frac{\lambda\varepsilon}{2}\left(\delta\left\|\nabla e_{\phi}^{k}\right\|^{2} +\left\|\delta(\nabla e_{\phi}^{k})\right\|^{2}\right)  + \lambda \left(G_{\phi}^{k-1}, \delta e_{\phi}^{k}  \right)+ \lambda S\left\|\delta e_{\phi}^{k} \right\|^{2}= \lambda \left(R_{b}^{k}, \delta e_{\phi}^{k} \right),\\
& \Delta tM\left\|e_{w}^{k}\right\|^{2}- \Delta t\varepsilon M\left( \nabla e_{\phi}^{k}, \nabla e_{w}^{k} \right)  - \Delta t M\left(e_{w}^{k},  G_{\phi}^{k-1}\right)- \Delta t SM\left(e_{w}^{k}, \delta e_{\phi}^{k} \right)=- \Delta t M \left(R_{b}^{k}, e_{w}^{k}\right).
\end{aligned}
\right.
\end{equation}

Taking the $L^{2}$ inner product of equation (\ref{3-20c}) with $ \Delta te_{b}^{k}$, we have 
\begin{equation}\label{4-14}
\begin{aligned}
&\frac{1}{2}\left(\delta \left\|e_{b}^{k}\right\|^{2} +\left\|\delta e_{b}^{k} \right\|^{2}\right)+\frac{ \Delta t}{\sigma\mu}\left\|\nabla\times e_{b}^{k}\right\|^{2} - \Delta t \left(\textbf{v}(t_{k-1})\times\textbf{b}(t_{k-1})-\textbf{v}_{\star}^{k-1}\times\textbf{b}^{k-1},  \nabla\times e_{b}^{k}\right )= \Delta t \left(R_{c}^{k}, e_{b}^{k} \right).
\end{aligned}
\end{equation}

Taking the $L^{2}$ inner product of  equation (\ref{3-20d})  with $ \Delta t \tilde{e}_{u}^{k}$,  we get 
\begin{equation}\label{4-15}
\begin{aligned}
&\frac{1}{2}\left(\left\|\tilde{e}_{u}^{k}\right\|^{2}-\left\| e_{u}^{k-1}\right\|^{2}+ \left\|\tilde{e}_{u}^{k}- e_{u}^{k-1}\right\|^{2}\right)+\Delta t\nu\left\|\nabla\tilde{e}_{u}^{k}\right\|^{2}+ \Delta t\left((\textbf{v}(t_{k-1})\cdot\nabla)\textbf{v}(t_{k})-(\textbf{v}^{k-1}\cdot\nabla) \tilde{\textbf{v}}^{k} ,\tilde{e}_{u}^{k}\right)  + \Delta t \left(\tilde{e}_{u}^{k}, \nabla e_{p}^{k-1}\right)\\
&\quad + \Delta t\lambda \left(\phi(t_{k-1})\nabla w(t_{k})-\phi^{k-1}\nabla w^{k}, \tilde{e}_{u}^{k} \right) +\frac{ \Delta t}{\mu}\left(\textbf{b}(t_{k-1})\times\nabla\times\textbf{b}(t_{k}) -\textbf{b}^{k-1}\times\nabla\times\textbf{b}^{k} ,\tilde{e}_{u}^{k} \right)= \Delta t\left(R_{d}^{k}, \tilde{e}_{u}^{k}\right).
\end{aligned}
\end{equation}

Taking the $L^{2}$ inner product of  equation (\ref{3-20e}) with itself, we have 
\begin{equation}\label{4-16}
\begin{aligned}
&\frac{1}{2}\left\|e_{u}^{k}\right\|^{2}+\frac{\Delta t^{2}}{2}\left\|\nabla e_{p}^{k}\right\|^{2}+ \Delta t\left(e_{u}^{k}, \nabla e_{p}^{k}\right )\\
&=\frac{1}{2}\left\|\tilde{e}_{u}^{k}\right\|^{2}+\frac{\Delta t^{2}}{2}\left\|\nabla e_{p}^{k-1}\right\|^{2}+\frac{\Delta t^{2}}{2}\left\|R_{e}^{k}\right\|^{2}+ \Delta t\left(\tilde{e}_{u}^{k},  \nabla e_{p}^{k-1}\right)+ \Delta t \left(\tilde{e}_{u}^{k},  R_{e}^{k}\right)+ \Delta t^{2}\left(\nabla e_{p}^{k-1}, R_{e}^{k}\right).
\end{aligned}
\end{equation}

Combining equations (\ref{4-12})-(\ref{4-16}), we obtain 
\begin{equation}\label{33}
\begin{aligned}
& \lambda M\Delta t\left\|\nabla e_{w}^{k}\right\|^{2} +\frac{\varepsilon }{2} \left(\delta \left\|e_{\phi}^{k}\right\|^{2} +\left\|\delta e_{\phi}^{k} \right\|^{2}\right)+\frac{\lambda\varepsilon }{2} \left(\delta \left\|\nabla e_{\phi}^{k}\right\|^{2} +\left\|\delta(\nabla e_{\phi}^{k}) \right\|^{2}\right)+ \lambda S\left\|\delta e_{\phi}^{k} \right\|^{2}+ \Delta tM\left\|e_{w}^{k}\right\|^{2}\\
&+\frac{1}{2}\left(\delta\left\|e_{b}^{k}\right\|^{2} +\left\|\delta e_{b}^{k} \right\|^{2}\right)+\frac{ \Delta t}{\sigma\mu}\left\|\nabla\times e_{b}^{k}\right\|^{2} +\frac{1}{2}\left(\delta \left\|e_{u}^{k}\right\|^{2} + \left\|\tilde{e}_{u}^{k}- e_{u}^{k-1}\right\|^{2}\right)+\Delta t\nu\left\|\nabla\tilde{e}_{u}^{k}\right\|^{2}+\frac{\Delta t^{2}}{2}\left(\delta\left\|\nabla e_{p}^{k}\right\|^{2} \right)\\
&= \lambda\Delta t\left(\phi(t_{k-1})\textbf{v}(t_{k-1})-\phi^{k-1}\textbf{v}^{k-1},  \nabla e_{w}^{k}\right)- \Delta t^{2}\lambda^{2}\left(\phi(t_{k-1})^{2}\nabla w(t_{k})- (\phi^{k-1})^{2}\nabla w^{k}, \nabla e_{w}^{k}\right) & (:\text{term } A)\\
&\quad + \varepsilon\Delta t\left( \phi(t_{k-1})\textbf{v}(t_{k-1})-\phi^{k-1}\textbf{v}^{k-1},  \nabla e_{\phi}^{k} \right)- \Delta t^{2}\lambda \varepsilon\left(\phi(t_{k-1})^{2}\nabla w(t_{k})- (\phi^{k-1})^{2}\nabla w^{k}, \nabla e_{\phi}^{k}\right) & (:\text{term } B)\\
&\quad - \lambda \left(G_{\phi}^{k-1}, \delta e_{\phi}^{k}  \right)+ \Delta t M\left(e_{w}^{k},  G_{\phi}^{k-1}\right)+ \Delta t SM\left(e_{w}^{k}, \delta e_{\phi}^{k}  \right) & (:\text{term } C)\\
&\quad + \Delta t \left(\textbf{v}(t_{k-1})\times\textbf{b}(t_{k-1})-\textbf{v}_{\star}^{k-1}\times\textbf{b}^{k-1},  \nabla\times e_{b}^{k} \right)- \Delta t\left((\textbf{v}(t_{k-1})\cdot\nabla)\textbf{v}(t_{k})-(\textbf{v}^{k-1}\cdot\nabla ) \tilde{\textbf{v}}^{k} ,\tilde{e}_{u}^{k}\right) & (:\text{term } D)\\
&\quad - \Delta t\lambda \left(\phi(t_{k-1})\nabla w(t_{k})-\phi^{k-1}\nabla w^{k}, \tilde{e}_{u}^{k} \right) -\frac{ \Delta t}{\mu}\left(\textbf{b}(t_{k-1})\times\nabla\times\textbf{b}(t_{k}) -\textbf{b}^{k-1}\times\nabla\times\textbf{b}^{k} ,\tilde{e}_{u}^{k} \right) & (:\text{term } E)\\
&\quad + \Delta t \left(\tilde{e}_{u}^{k},  R_{e}^{k}\right)+ \Delta t^{2}\left(\nabla e_{p}^{k-1}, R_{e}^{k}\right)+\frac{\Delta t^{2}}{2}\left\|R_{e}^{k}\right\|^{2}+ \Delta t\lambda \left(R_{a}^{k}, e_{w}^{k}\right)+ \varepsilon\Delta t\left(R_{a}^{k}, e_{\phi}^{k}  \right) & (:\text{term } F)\\
&\quad + \lambda \left(R_{b}^{k}, \delta e_{\phi}^{k} \right)- \Delta t M \left(R_{b}^{k}, e_{w}^{k}\right)+ \Delta t \left(R_{c}^{k}, e_{b}^{k} \right)+ \Delta t\left(R_{d}^{k}, \tilde{e}_{u}^{k}\right). & (:\text{term } G)\\
\end{aligned}
\end{equation}

We derive the estimates of the right-hand side  by using the Young inequality $ab\leq \xi a^{2}+ \frac{1}{4\xi}b^{2}$, \textbf{Assumption \ref{ass1}}, \textbf{Lemma \ref{lemma3-3}}.  
\begin{equation*}\label{4-18}
\begin{aligned}
(\text{term } A) &\leq \left|\lambda\Delta t\left(e_{\phi}^{k-1}\textbf{v}(t_{k-1})+\phi^{k-1}e_{u}^{k-1},  \nabla e_{w}^{k}\right)\right| +\Delta t^{2}\lambda^{2}\left| \left(\phi(t_{k-1})^{2}\nabla e_{w}^{k}, \nabla e_{w}^{k} \right)+\left(\phi(t_{k-1})^{2}\nabla w^{k}, \nabla e_{w}^{k}\right)-\left(\left(\phi^{k-1}\right)^{2}\nabla w^{k}, \nabla e_{w}^{k} \right) \right|\\
&\leq\lambda\Delta t\left(\left\|e_{\phi}^{k-1}\right\| \left\|\textbf{v}(t_{k-1})\right\|_{ L^{\infty}} +\left\|\phi^{k-1}\right\|_{L^{\infty}}\left\|e_{u}^{k-1}\right\|    \right)\left\|\nabla e_{w}^{k}\right\| \\
&\quad+ \Delta t^{2}\lambda^{2} \left(\left\| \phi(t_{k-1})^{2}\right\|_{L^{\infty}}\left\|\nabla e_{w}^{k}\right\|^{2}
+\left\|\phi(t_{k-1})^{2}\right\|_{L^{\infty}}\left\|\nabla w^{k}\right\|\left\|\nabla e_{w}^{k}\right\| + \left\|\left(\phi^{k-1}\right)^{2}\right\|_{L^{\infty}}\left\|\nabla w^{k}\right\|\left\|\nabla e_{w}^{k}\right\| \right)\\
& \leq \frac{\lambda M\Delta t}{8}\left\|\nabla e_{w}^{k}\right\|^{2}+C\left(\Delta t \left\|e_{\phi}^{k-1}\right\|^{2} + \Delta t \left\|e_{u}^{k-1}\right\|^{2}+ \Delta t ^{2}\left\|\nabla e_{w}^{k}\right\|^{2} +\Delta t^{3}\left\|\nabla w^{k}\right\| ^{2} \right).\\
(\text{term } B)  &\leq  \left|\varepsilon \Delta t\left(e_{\phi}^{k-1}\textbf{v}(t_{k-1})+\phi^{k-1}e_{u}^{k-1}, \nabla e_{\phi}^{k}\right )\right| +\Delta t^{2}\lambda \varepsilon \left| \left(\phi(t_{k-1})^{2}\nabla e_{w}^{k}, \nabla e_{\phi}^{k} \right )
+\left( \phi(t_{k-1})^{2}\nabla w^{k},  \nabla e_{\phi}^{k}  \right) +\left(\left(\phi^{k-1}\right)^{2}\nabla w^{k},      \nabla e_{\phi}^{k}\right) \right|\\
&\leq \varepsilon \Delta t\left(\left\|e_{\phi}^{k-1}\right\|  \left\|\textbf{v}(t_{k-1})\right\|_{ L^{\infty}}+ \left\|\phi^{k-1}\right\|_{L^{\infty}}\left\|e_{u}^{k-1}\right\| \right)\left\|\nabla e_{\phi}^{k}\right \|\\
&\quad +\Delta t^{2}\lambda \varepsilon \left(\left\|\phi(t_{k-1})^{2}\right\|_{L^{\infty}}\left\|\nabla e_{w}^{k}\right\|
+\left\|\phi(t_{k-1})^{2}\right\|_{L^{\infty}}\left\|\nabla w^{k}\right\|+\left\|\left(\phi^{k-1}\right)^{2}\right\|_{L^{\infty}}\left\|\nabla w^{k}\right\| \right)\left\|\nabla e_{\phi}^{k} \right\|\\
& \leq \frac{\lambda M \Delta t }{8}\left\|\nabla e_{w}^{k}\right\|^{2}+ C\left( \Delta t\left\| e_{\phi}^{k-1}\right\|^{2} +\Delta t\left\|e_{u}^{k-1}\right\|^{2}+ \Delta t\left\|\nabla e_{\phi}^{k}\right\|^{2}+ \Delta t^{3}\left\|\nabla w^{k}\right\|^{2}\right) .
\end{aligned}
\end{equation*}

For $G_{\phi}^{k-1}$ (see \cite{2020ErrorXU}), the estimate holds as 
\begin{equation*}
\begin{aligned}
&\left\|G_{\phi}^{k-1}\right\|\lesssim \left\|e_{\phi}^{k-1}\right\|, \quad \left\|\nabla G_{\phi}^{k-1}\right\|\lesssim  \left\|e_{\phi}^{k-1}\right\|+\left \|\nabla e_{\phi}^{k-1}\right\|.
\end{aligned}
\end{equation*}
For the \text{term } C,  the estimate is given by 
\begin{align*}
&- \lambda \left(G_{\phi}^{k-1}, \delta e_{\phi}^{k} \right )\\
&= -\lambda\Delta t\left(G_{\phi}^{k-1}, \frac{\delta e_{\phi}^{k} }{\Delta t}\right ) \\
&=  -\lambda\Delta t\left(G_{\phi}^{k-1}, -\nabla\cdot\left(\phi(t_{k-1})\textbf{v}(t_{k-1})-\phi^{k-1}\textbf{v}^{k-1}\right)+ \Delta t \lambda \nabla\cdot\left(\phi(t_{k-1})^{2}\nabla w(t_{k})-\left(\phi^{k-1}\right)^{2}\nabla w^{k}\right )+M\Delta e_{w}^{k}+R_{a}^{k}   \right)  \\
&\leq  \left|\lambda\Delta t\left(\nabla G_{\phi}^{k-1},  \phi(t_{k-1})\textbf{v}(t_{k-1})-\phi^{k-1}\textbf{v}^{k-1}\right)\right|+\left|\lambda^{2}\Delta t^{2}\left(\nabla G_{\phi}^{k-1},  \phi(t_{k-1})^{2}\nabla w(t_{k})-\left(\phi^{k-1}\right)^{2}\nabla w^{k}  \right)\right| \\
&\quad +\left|\lambda\Delta t \left(\nabla G_{\phi}^{k-1}, M\nabla e_{w}^{k} \right)\right|+\left|\lambda\Delta t \left(  G_{\phi}^{k-1}, R_{a}^{k}\right) \right| \\
&\leq \lambda \Delta t\left\|\nabla G_{\phi}^{k-1}\right\|\left(\left\|e_{\phi}^{k-1}\right\| \left\|\textbf{v}(t_{k-1})\right\|_{ L^{\infty}} +\left\|\phi^{k-1}\right\|_{L^{\infty}} \left\|e_{u}^{k-1}\right\|    \right)\\
&\quad+\lambda^{2}\Delta t^{2}\left\|\nabla G_{\phi}^{k-1}\right\|\left(\left\|\phi(t_{k-1})^{2}\right\|_{L^{\infty}}\left\|\nabla e_{w}^{k} \right\| +\left\|\phi(t_{k-1})^{2}\right\|_{L^{\infty}}\left\|\nabla w^{k}\right\|+  \left\|\phi^{k-1}\right\|^{2}_{L^{\infty}}\left\|\nabla w^{k}\right\|  \right )\\
&\quad +\lambda \Delta tM\left\|\nabla G_{\phi}^{k-1}\right\|\left\|\nabla e_{w}^{k}\right\| +\lambda \Delta t\left\|  G_{\phi}^{k-1}\right\|^{2}+\lambda \Delta t \left\|R_{a}^{k}\right\|^{2}\\
& \leq  \frac{\lambda M\Delta t}{8}\left\|\nabla e_{w}^{k}\right\|^{2}+C\left(  \Delta t\left\|e_{\phi}^{k-1}\right\| ^{2}+\Delta t\left\|\nabla e_{\phi}^{k-1}\right\| ^{2} +\Delta t\left\|e_{u}^{k-1}\right\|^{2} +  \Delta t^{2} \left\|\nabla e_{w}^{k}\right\|^{2}+\Delta t^{2}+\Delta t^{3}\left\|\nabla w^{k}\right\|^{2}\right).\\
\end{align*}
Furthermore,  we derive 
\begin{align*}
\Delta t M\left(e_{w}^{k},  G_{\phi}^{k-1}\right)& \leq  \frac{\Delta t M}{8} \left\|e_{w}^{k}\right\|^{2} +C \Delta t \left\|e_{\phi}^{k-1}\right\| ^{2},\\
\Delta t SM\left(e_{w}^{k}, \delta e_{\phi}^{k}  \right) & \leq\frac{\Delta tM}{8}\left\|e_{w}^{k}\right\|^{2} +C\Delta t\left\|\delta e_{\phi}^{k}  \right\|^{2}.
\end{align*}

For the \text{term } $ D=\Delta t \left(\textbf{v}(t_{k-1})\times\textbf{b}(t_{k-1})-\textbf{v}_{\star}^{k-1}\times\textbf{b}^{k-1},  \nabla\times e_{b}^{k} \right)- \Delta t\left(\left(\textbf{v}(t_{k-1})\cdot\nabla\right)\textbf{v}(t_{k})-\left(\textbf{v}^{k-1}\cdot\nabla \right)\tilde{\textbf{v}}^{k} ,\tilde{e}_{u}^{k}\right)$, we have 
\begin{equation*}\label{4-22}
\begin{aligned}
&\Delta t\left(e_{b}^{k-1}\times \textbf{v}(t_{k-1})+\textbf{b}^{k-1}\times (\textbf{v}(t_{k-1})- \textbf{v}_{\star}^{k-1} ), \nabla\times e_{b}^{k}  \right)\\
&\quad\leq \Delta t\left|\left(e_{b}^{k-1}\times \textbf{v}(t_{k-1}), \nabla\times e_{b}^{k} \right)+\left(\textbf{b}^{k-1}\times e_{u}^{k-1}, \nabla\times e_{b}^{k}  \right)  +\left( \textbf{b}^{k-1}\times\frac{\Delta t}{\mu}\textbf{b}^{k-1}\times \nabla\times \textbf{b}^{k},   \nabla\times e_{b}^{k}\right )  \right|\\
& \quad  \leq \Delta t \left(\left\|e_{b}^{k-1}\right\|\left\|\textbf{v}(t_{k-1})\right\|_{L^{\infty}}+\left\|\textbf{b}^{k-1}\right\|_{L^{\infty}}\left\|e_{u}^{k-1}\right\|+ \frac{\Delta t}{\mu}\left\|\textbf{b}^{k-1}\right\|_{L^{\infty}}^{2} \left\|\nabla\times \textbf{b}^{k}\right\| \right)\left\|\nabla\times e_{b}^{k} \right\|\\
&\quad \leq  \frac{\Delta t }{4\sigma\mu} \left\|\nabla\times e_{b}^{k} \right\|^{2} +C\left(  \Delta t \left\|e_{b}^{k-1}\right\|^{2}+ \Delta t \left\|e_{u}^{k-1}\right\|^{2}+\Delta t^{3}\left\|\nabla\times \textbf{b}^{k}\right\|^{2}\right),\\
\end{aligned}
\end{equation*}
and 
\begin{equation*} 
\begin{aligned}
\Delta t\left((\textbf{v}(t_{k-1})\cdot\nabla)\textbf{v}(t_{k})-\left(\textbf{v}^{k-1}\cdot\nabla \right) \tilde{\textbf{v}}^{k} ,\tilde{e}_{u}^{k}\right)  &\leq \left| \Delta t\left((e_{u}^{k-1}\cdot\nabla)\textbf{v}(t_{k})-(\textbf{v}^{k-1}\cdot\nabla)\tilde{e}_{u}^{k}, \tilde{e}_{u}^{k}  \right )\right|\\
& \, \leq  \frac{\Delta t \nu }{8}\left\|\nabla\tilde{e}_{u}^{k}\right\|^{2}+ C\Delta t\left \|e_{u}^{k-1}\right\|^{2}.
\end{aligned}
\end{equation*}

From equation (\ref{3-20e}), we obtain the following expression for \(\tilde{e}_{u}^{k}\) 
\[
\tilde{e}_{u}^{k} = e_{u}^{k} + \Delta t \left(\delta( \nabla e_{p}^{k} ) \right) - \Delta t R_{e}^{k}.
\]

Then, we present the estimates for the \text{terms } E-G as 
\begin{align*}
(\text{term } E)& \leq \Delta t\lambda\left|\left(e_{\phi}^{k-1}\nabla w(t_{k})+\phi^{k-1}\nabla e_{w}^{k } , \tilde{e}_{u}^{k}  \right)\right|+\frac{ \Delta t}{\mu}\left|\left(e_{b}^{k-1}\times\nabla\times  \textbf{b}(t_{k})-\textbf{b}^{k-1}\times\nabla\times e_{b}^{k }, \tilde{e}_{u}^{k} \right) \right|\\
&\leq \Delta t\lambda \left\|e_{\phi}^{k-1}\right\|\left\|\nabla w(t_{k})\right\|_{L^{\infty}} \left\| \tilde{e}_{u}^{k}\right\| +\Delta t\lambda \left\|\phi^{k-1}\right\|_{L^{\infty}}\left\|\nabla e_{w}^{k }\right\|  \left\|\tilde{e}_{u}^{k}\right\| \\
&\quad +\frac{ \Delta t}{\mu} \left(\left\|e_{b}^{k-1 }\right\|\left\|\nabla\times  \textbf{b}(t_{k})\right\|_{L^{4}}\left\|\tilde{e}_{u}^{k} \right\|_{L^{4}}+\left\|\textbf{b}^{k-1}\right\|_{L^{\infty}}\left\|\nabla\times e_{b}^{k }\right\|\left\|\tilde{e}_{u}^{k} \right\| \right )\\
&\leq \Delta t \lambda \left\|e_{\phi}^{k-1}\right\| \left\|\nabla w(t_{k})\right\|_{L^{\infty}} \left\| \tilde{e}_{u}^{k}\right\| + \Delta t\lambda\left\|\phi^{k-1}\right\|_{L^{\infty}} \left\|\nabla e_{w}^{k }\right\|  \left( \left\|e_{u}^{k}\right\|+\Delta t\left\|\delta(\nabla e_{p}^{k} ) \right\|+\Delta t \left\|R_{e}^{k}\right\| \right) \\
&\quad +\frac{ \Delta t}{\mu}   \left\|e_{b}^{k-1 }\right\|\left\|\nabla\times  \textbf{b}(t_{k})\right\|_{L^{4}} \left\|\tilde{e}_{u}^{k} \right\|_{L^{4}}+\frac{ \Delta t}{\mu} \left\|\textbf{b}^{k-1}\right\|_{L^{\infty}}  \left\|\nabla\times e_{b}^{k }\right\|\left( \left\|e_{u}^{k}\right\|+\Delta t\left\|\delta(\nabla e_{p}^{k} ) \right\|+\Delta t \left\|R_{e}^{k}\right\| \right)  \\
& \leq  \frac{\lambda M\Delta t}{8}\left\|\nabla e_{w}^{k }\right\|^{2} +\frac{\Delta t\nu}{8}\left\|\nabla\tilde{e}_{u}^{k}\right\|^{2}+\frac{\Delta t}{4\sigma\mu}\left\|\nabla\times e_{b}^{k }\right\|^{2} + C\left(\Delta t\left\|e_{\phi}^{k-1}\right\|^{2} +\Delta t\left\|e_{u}^{k}\right\|^{2}+ \Delta t^{3}\left\|\delta(\nabla e_{p}^{k} ) \right\|^{2}+\Delta t^{2}+\Delta t\left\|e_{b}^{k-1 }\right\|^{2} \right).\\
(\text{term } F)& \leq \left|\Delta t \left(\tilde{e}_{u}^{k},  R_{e}^{k}\right)+ \Delta t^{2}\left(\nabla e_{p}^{k-1}, R_{e}^{k}\right)+\frac{\Delta t^{2}}{2}\left\|R_{e}^{k}\right\|^{2}+ \Delta t\lambda \left(R_{a}^{k}, e_{w}^{k}\right)+ \varepsilon\Delta t\left(R_{a}^{k}, e_{\phi}^{k} \right )\right|\\
& \leq \frac{\Delta t\nu}{8}\left\|\nabla\tilde{e}_{u}^{k}\right\|^{2} +\frac{\Delta tM}{8}\left\|e_{w}^{k}\right\|^{2}+ C\left(\Delta t^{3}\left\|\nabla e_{p}^{k-1}\right\|^{2}+ \Delta t \left\|  e_{\phi}^{k}\right\|^{2} +\Delta t^{2} \right).\\
(\rm term  \, G)& \leq \left|\lambda \left(R_{b}^{k}, \delta e_{\phi}^{k} \right)- \Delta t M \left(R_{b}^{k}, e_{w}^{k}\right)+ \Delta t \left(R_{c}^{k}, e_{b}^{k} \right)+ \Delta t\left(R_{d}^{k}, \tilde{e}_{u}^{k}\right)\right|\\
& \leq \frac{\varepsilon}{4}\left\|\delta e_{\phi}^{k} \right\|^{2}+ \frac{\Delta t M}{8}\left\|e_{w}^{k}\right\|^{2}+\frac{\Delta t\nu}{8}\left\|\nabla\tilde{e}_{u}^{k}\right\|^{2}+C\left( \Delta t^{2}+ \Delta t \left\|e_{b}^{k}\right\|^{2} \right).
\end{align*}

By combining the above estimates with   (\ref{33}), we get 
\begin{equation*}
\begin{aligned}
& \frac{\lambda M\Delta t}{2}\left(\delta \left\|\nabla e_{w}^{k}\right\|^{2}  \right) +\frac{\varepsilon }{2} \left(\delta\left\|e_{\phi}^{k}\right\|^{2} \right) +\frac{\lambda\varepsilon}{2}\left(\delta \left\|\nabla e_{\phi}^{k}\right\|^{2} +\left\|\delta( \nabla e_{\phi}^{k} )\right\|^{2}\right) + \frac{\varepsilon+ 4\lambda S}{4}\left\|\delta e_{\phi}^{k} \right\|^{2}+ \frac{\Delta tM}{2}\left\|e_{w}^{k}\right\|^{2}\\
& +\frac{1}{2}\left(\delta\left\|e_{b}^{k}\right\|^{2} +\left\|\delta e_{b}^{k} \right\|^{2}\right)+\frac{ \Delta t}{2\sigma\mu}\left\|\nabla\times e_{b}^{k}\right\|^{2}+ \frac{\Delta t\nu}{2}\left\|\nabla\tilde{e}_{u}^{k}\right\|^{2}  +\frac{1}{2}\left(\delta\left\|e_{u}^{k}\right\|^{2} + \left\|\tilde{e}_{u}^{k}- e_{u}^{k-1}\right\|^{2}\right) +\frac{\Delta t^{2}}{2}\left(\delta\left\|\nabla e_{p}^{k}\right\|^{2} \right)\\
& \leq C\Delta t  \left(\Delta t\left\|\nabla e_{w}^{k }\right\|^{2}+\left\| e_{\phi}^{k}\right\|^{2}+\left\|  e_{\phi}^{k-1}\right\|^{2} +\left\|\nabla e_{\phi}^{k-1}\right\|^{2}  +\left\|\nabla e_{\phi}^{k}\right\|^{2}  +\left\|e_{b}^{k-1}\right\|^{2}+\left\|e_{b}^{k }\right\|^{2} + \left\| e_{u}^{k}\right\|^{2}+\left\| e_{u}^{k-1}\right\|^{2}+\Delta t^{2}\left\|\nabla e_{p}^{k}\right\|^{2}+ \Delta t^{2}\left\|\nabla e_{p}^{k-1}\right\|^{2}    \right) \\
&\quad +C\left(\Delta t^{3}\left\|\nabla w^{k}\right\|^{2} + \Delta t^{3}\left\|\nabla\times \textbf{b}^{k}\right\|^{2} \right) +C\Delta t^{2},\\
\end{aligned}
\end{equation*}
where we have supplemented the term $\frac{\lambda M\Delta t}{2}\left\|\nabla e_{w}^{k-1}\right\|^{2}$ to ensure the inequality holds in the form required by \textbf{Lemma 3.1}.

Summing up the above inequality from $k=0,\cdots, m$ and using the fact that $\left\|\nabla e_{w}^{0}\right\|^{2}=\left\|e_{\phi}^{0}\right\|^{2}=\left\|\nabla e_{\phi}^{0}\right\|^{2}=\left\|e_{b}^{0}\right\|^{2}=\left\|e_{u}^{0}\right\|^{2}=\left\|\nabla e_{p}^{0}\right\|^{2}$=0, along with the results of \textbf{Remark \ref{3-3}} and \textbf{Lemma 3.1},
we obtain 
\begin{equation}\label{s-26}
\begin{aligned}
&\frac{\varepsilon}{2} \left\|e_{\phi}^{m}\right\|^{2}+\frac{\lambda\varepsilon}{2} \left\|\nabla e_{\phi}^{m}\right\|^{2}+\frac{1}{2}\left\|e_{b}^{m}\right\|^{2}+\frac{1}{2}\left\| e_{u}^{m}\right\|^{2}+\frac{\Delta t^{2}}{2} \left\|\nabla e_{p}^{m}\right\|^{2}+  \frac{\lambda M\Delta t}{2}\left\|\nabla e_{w}^{m}\right\|^{2}\\
& + \Delta t\sum_{k=0}^{m}\left(\frac{\varepsilon+4\lambda S}{4\Delta t}  \left\|\delta e_{\phi}^{k}\right\|^{2} +\frac{\lambda\varepsilon}{2\Delta t} \left\|\delta(\nabla e_{\phi}^{k})\right\|^{2}+ \frac{1}{2\sigma\mu}\left\|\nabla\times e_{b}^{k}\right\|^{2}+ \frac{1}{2\Delta t}\left\|\delta e_{b}^{k}\right\|^{2}  + \frac{1}{2\Delta t}\left\|\tilde{e}_{u}^{k}- e_{u}^{k-1}\right\|^{2} +\frac{ M }{2}\left\|e_{w}^{k}\right\|^{2}+ \frac{\nu}{2} \left\|\nabla\tilde{e}_{u}^{k}\right\|^{2} \right)\\
&\leq C_{3}\Delta t^{2},
\end{aligned}
\end{equation}
where  $C_{0}$,  $C_{3}$ are two positive general constants, with $\Delta t\leq C_{0}$ and $m\leq K$.

\textbf{Step ii}. Then, we   give the estimates of $\left\|\textbf{b}^{k }\right\|_{H^{2}}$ and $\left\|\phi^{k }\right\|_{H^{2}}$.
\vspace{3pt}

(i)
Applying the divergence operator to equation (\ref{3-4a}), we have 
\begin{equation*}
-\Delta t\Delta\left(\delta p^{k}\right)=-\nabla\cdot \tilde{\textbf{v}}^{k}.
\end{equation*}
Combining  the inequality \eqref{s-26}, we obtain 
\begin{equation}\label{p}
\Delta t\left\|\delta p^{k}\right\|_{H^{2}}\leq \left\|\nabla\cdot\tilde{\textbf{v}}^{k}\right\|=\left\|\nabla\cdot \tilde{e}^{k}_{u}\right \|\lesssim\Delta t^{1/2}.
\end{equation}

With the help of  the identity $\nabla\times\nabla\times \textbf{b}^{k}=-\Delta \textbf{b}^{k}+ \nabla\left( \nabla\cdot\textbf{b}^{k}\right)=-\Delta \textbf{b}^{k}$, and   the equations (\ref{3-2a})-(\ref{3-2b}), we get 
\begin{equation}\label{3-29}
 -\frac{1}{\sigma\mu}\Delta\textbf{b}^{k}=\nabla\times\left(\textbf{v}^{k-1}_{\star}\times\textbf{b}^{k-1}  \right) -\frac{\delta\textbf{b}^{k}}{\Delta t}.
\end{equation}

By utilizing  the equations  (\ref{3-2b}) and  (\ref{3-4a}), we derive 
\begin{equation*}
\textbf{v}^{k-1}_{\star}=\tilde{\textbf{v}}^{k-1}-\Delta t\nabla\left(\delta p^{k-1}\right)-\frac{\Delta t}{\mu}\textbf{b}^{k-1}\times\nabla\times\textbf{b}^{k}.
\end{equation*}

Due to the identity $\nabla\times(\textbf{a}\times \textbf{b})=\textbf{b}\cdot\nabla\textbf{a}-\textbf{a}\cdot\nabla\textbf{b}+\textbf{a}\nabla\cdot\textbf{b}-\textbf{b}\nabla\cdot\textbf{a}   $, $\nabla\cdot\textbf{b}^{k}=0$, and  the equation (\ref{3-29}), we have 
\begin{equation}\label{4-27}
\begin{aligned}
\|\textbf{b}^{k}\|_{H^{2}}&\lesssim \left\| \frac{\delta\textbf{b}^{k}}{\Delta t}\right\|+\left\|\nabla\times\left(\textbf{v}^{k-1}_{\star}\times\textbf{b}^{k-1}  \right) \right\| \\
&\lesssim \left\|\frac{\delta\textbf{b}^{k}}{\Delta t}\right\|+\left\|\textbf{b}^{k-1}\nabla\textbf{v}^{k-1}_{\star}\right\|+\left\|\textbf{v}^{k-1}_{\star}\nabla\textbf{b}^{k-1}\right\|+\left\|\textbf{b}^{k-1}\nabla\cdot\textbf{v}^{k-1}_{\star} \right\|\\
&\lesssim\left\|\frac{-\delta e_{b}^{k} }{\Delta t}+\frac{\delta\textbf{b}(t_{k})}{\Delta t}\right\|  & (\text{term } M_{1}) \\
&\quad +\left\|\textbf{b}^{k-1}\nabla\tilde{\textbf{v}}^{k-1}\right\|  +\Delta t\left\|\textbf{b}^{k-1}\nabla\nabla\left(\delta p^{k-1}\right)\right\|  +\frac{\Delta t}{\mu}\left\|\textbf{b}^{k-1}\nabla\left(\nabla\times\textbf{b}^{k}\times\textbf{b}^{k-1} \right )\right\|    & (\text{term } M_{2}) \\
&\quad+\left\|\tilde{\textbf{v}}^{k-1}\nabla\textbf{b}^{k-1} \right\|  +\Delta t\left\| \nabla \left(\delta p^{k-1} \right)\nabla\textbf{b}^{k-1}\right \|  +\frac{\Delta t}{\mu}\left\|\left(\nabla\times\textbf{b}^{k}\times\textbf{b}^{k-1} \right)\nabla\textbf{b}^{k-1}\right\|  & (\text{term } M_{3}) \\
&\quad +\left\|\textbf{b}^{k-1}\nabla\cdot\tilde{\textbf{v}}^{k-1}\right\| +\Delta t\left\|\textbf{b}^{k-1}\Delta \left(\delta p^{k-1}\right)\right\| +\frac{\Delta t}{\mu}\left\|\textbf{b}^{k-1}\nabla\cdot \left(\nabla\times\textbf{b}^{k}\times\textbf{b}^{k-1} \right)\right\|. & (\text{term } M_{4}) \\
\end{aligned}
\end{equation}

Obviously, we can obtain the \text{term } $M_{1}\lesssim C_{4}$ from equation (\ref{s-26}). With the help of the inequalities
$$\left\|\nabla\tilde{\textbf{v}}^{k-1}\right\|\leq \left\|\nabla\tilde{e}_{u}^{k-1}\right\|+\left\|\nabla\tilde{\textbf{v}} (t_{k-1})\right\|\lesssim \Delta t^{1/2} +C_{5}\lesssim C_{6}, \quad \left\|\nabla\textbf{b}^{k}\right\|\leq  \left\|\nabla e _{b}^{k }\right\|+\left\|\nabla\textbf{b}  (t_{k }) \right\|\lesssim \left\|\nabla\times e _{b}^{k }\right\|+\left\|\nabla \textbf{b}  (t_{k })\right\|\lesssim \Delta t^{1/2} +C_{7}\lesssim C_{8}, $$
and equation (\ref{p}), we have 
\begin{equation}\label{4-28}
\begin{aligned}
(\text{term } M_{2}) & \leq  \left\|\textbf{b}^{k-1}\right\|_{L^{\infty}}\left\|\nabla\tilde{\textbf{v}}^{k-1}\right\|  +\Delta t  \left\|\textbf{b}^{k-1} \right\|_{L^{\infty}}\left\|\delta p^{k-1}\right\|_{H^{2}}  +\frac{\Delta t}{\mu}\left(\left\|\textbf{b}^{k-1} \right\|^{2}_{L^{\infty}}\left\|\textbf{b}^{k}\right\|_{H^{2}}+ \left\|\textbf{b}^{k-1}\right\|_{L^{\infty}}\left\|\nabla\textbf{b}^{k}\right\|_{L^{3}}\left\|\nabla\textbf{b}^{k-1}\right\|_{L^{6}}\right)\\
&\leq  C\left(\left\|\nabla\tilde{\textbf{v}}^{k-1}\right\| +\Delta t \left\|\delta p^{k-1}\right\|_{H^{2}}\right)+C\Delta t \left(\left\|\textbf{b}^{k}\right\|_{H^{2}}+ \left\|\nabla\textbf{b}^{k}\right\| ^{1/2}\left\| \textbf{b}^{k }\right\|_{H^{2}}^{1/2}\left\|\textbf{b}^{k-1}\right\|_{H^{2}}\right)\\
&\leq C_{9}+C\Delta t^{1/2}+C\Delta t \left\|\textbf{b}^{k}\right\|_{H^{2}}+\frac{1}{6}\left\|\textbf{b}^{k}\right\|_{H^{2}}+C\Delta t^{2}\left\|\textbf{b}^{k-1}\right\|_{H^{2}}^{2}\\
&\leq C_{10} +C\Delta t \left\|\textbf{b}^{k}\right\|_{H^{2}}+\frac{1}{6}\left\|\textbf{b}^{k}\right\|_{H^{2}}+C\Delta t^{2}\left\|\textbf{b}^{k-1}\right\|_{H^{2}}^{2},
\end{aligned}
\end{equation}
where we apply 
\begin{equation*}
\begin{aligned}
&\frac{\Delta t}{\mu}\left\|\textbf{b}^{k-1}\nabla\left(\nabla\times\textbf{b}^{k}\times\textbf{b}^{k-1}  \right)\right\|\\
&\leq \frac{\Delta t}{\mu}\left(\left\|\textbf{b}^{k-1}\right\|_{L^{\infty}}\left\|\nabla \left(\nabla\times\textbf{b}^{k}\right)\right\|_{L^{2}}\left\|\textbf{b}^{k-1} \right\|_{L^{\infty}} +   \left\|\textbf{b}^{k-1}\right\|_{L^{\infty}}\left\|\nabla\times\textbf{b}^{k}\right\|_{L^{3}}\left\|\nabla \textbf{b}^{k-1} \right\|_{L^{6}}\right)\\
&\leq \frac{\Delta t}{\mu}\left(\left\|\textbf{b}^{k-1}\right \|^{2}_{L^{\infty}}\left\|\textbf{b}^{k}\right\|_{H^{2}}+ \left\|\textbf{b}^{k-1}\right\|_{L^{\infty}}\left\|\nabla\textbf{b}^{k}\right\|_{L^{3}}\left\|\nabla\textbf{b}^{k-1}\right\|_{L^{6}} \right).
\end{aligned}
\end{equation*}

From equation (\ref{s-26}), we have $\left\|\nabla\tilde{e}^{k-1}_{u}\right\|\lesssim \Delta t^{1/2}$ and $\left\|\nabla\times\textbf{b}^{k} \right\|\lesssim\left\|\nabla \times e_{b}^{k}\right\|+\left\|\nabla\times \textbf{b}(t_{k})\right\|\lesssim \Delta t^{1/2}+C_{11}\lesssim C_{12}$. Thus, we obtain 
\begin{equation}\label{4-29}
\begin{aligned}
(\text{term } M_{3}) & \leq \left\|\tilde{e}^{k-1}_{u}\nabla\textbf{b}^{k-1} \right\| +\left\|\textbf{v}(t_{k-1})\nabla\textbf{b}^{k-1} \right\| +\Delta t\left\| \nabla \left(\delta p^{k-1}\right)\nabla\textbf{b}^{k-1} \right\|   + \frac{\Delta t}{\mu} \left\|\left(\nabla\times\textbf{b}^{k}\times\textbf{b}^{k-1}\right)\nabla\textbf{b}^{k-1}\right\|\\
 &\leq \left\|\tilde{e}^{k-1}_{u}\right\|_{L^{6}}\left\|\nabla\textbf{b}^{k-1} \right\|_{L^{3}} +\left\|\textbf{v}(t_{k-1})\right\|_{L^{\infty}}\left\|\nabla\textbf{b}^{k-1} \right\| +\Delta t\left\| \nabla \left(\delta p^{k-1}\right )\right\|_{L^{6}}\left\| \nabla\textbf{b}^{k-1} \right\|_{L^{3}}  + \frac{\Delta t}{\mu} \left\|\nabla\times\textbf{b}^{k}\right\|_{L^{3}}\left\|\textbf{b}^{k-1}\right\|_{L^{\infty}}\left\|\nabla\textbf{b}^{k-1}\right\|_{L^{6}}\\
 &\leq \left\|\nabla\tilde{e}^{k-1}_{u}\right\|\left\|\nabla\textbf{b}^{k-1} \right\|^{1/2}\left\|\textbf{b}^{k-1} \right\|^{1/2}_{H^{2}}+C\left\|\nabla\textbf{b}^{k-1} \right\|  \\
&\quad +C\Delta t\left\|\delta p^{k-1}\right\|_{H^{2}} \left\|\nabla\textbf{b}^{k-1} \right\|^{1/2}\left\|\textbf{b}^{k-1} \right\|^{1/2}_{H^{2}}+C\Delta t \left\|\nabla\times\textbf{b}^{k}\right\|^{1/2}\left\|\textbf{b}^{k}\right\|^{1/2}_{H^{2}}\left\| \textbf{b}^{k-1}\right\|_{H^{2}}\\
 &\leq  C_{8} +C\Delta t\left\|\textbf{b}^{k-1} \right\| _{H^{2}}+\frac{1}{6}\left\|\textbf{b}^{k}\right\| _{H^{2}}+C\Delta t^{2}\left\| \textbf{b}^{k-1}\right\|_{H^{2}}^{2}.
\end{aligned}
\end{equation}

Similarly, we derive 
\begin{equation}\label{4-30}
\begin{aligned}
(\text{term } M_{4}) & \leq C_{9} +C\Delta t\left\|\textbf{b}^{k-1} \right\| _{H^{2}}+\frac{1}{6}\left\|\textbf{b}^{k}\right\| _{H^{2}}+C\Delta t^{2}\left\| \textbf{b}^{k-1}\right\|_{H^{2}}^{2}.
\end{aligned}
\end{equation}

By combining equation (\ref{4-27}) with  equations (\ref{4-28})-(\ref{4-30}), we find the following:  if $\Delta t\leq \tilde{C}$, then there exist positive constants $\hat{C}_{4}, C_{13}$, and $C_{14}$ such that, for $k\leq K$, 
\begin{equation}\label{4-29}
\begin{aligned}
\left\|\textbf{b}^{k}\right\|_{H^{2}}\leq \hat{C}_{4}+C_{13}\Delta t\left\|\textbf{b}^{k-1} \right\| _{H^{2}} +C_{14}\Delta t^{2}\left\| \textbf{b}^{k-1}\right\|_{H^{2}}^{2}.
\end{aligned}
\end{equation}

Thus, by  \textbf{Lemma \ref{lem2}}, if $ \max \left\{C_{13}, C_{14}^{1/2}\right\}\hat{D}\Delta t \leq 1$, that is, if $\Delta t \leq 1/ \max \left\{C_{13}, C_{14}^{1/2}\right\}\hat{D}$,   we have 
\begin{equation}\label{bH2}
\left\|\textbf{b}^{K}\right\|_{H^{2}}\leq \hat{D},
\end{equation}
where   $\hat{D}= \max \left\{\left\|\textbf{b}^{0}\right\|_{H^{2}}, \hat{C}_{4}\right\}+2$.

(ii)  From equation (\ref{3-20b}), we obtain 
\begin{equation*}
\varepsilon\left\|\Delta e_{\phi}^{k}\right\|\leq\left\|R_{b}^{k}\right\|+ \left\|e_{w}^{k}\right\|+\left\|G_{\phi}^{k-1}\right\| +S\left\|\delta e_{\phi}^{k} \right\|.
\end{equation*}

From inequality \eqref{s-26}, we get 
\[
\left\|e_\phi^{k}\right\| \lesssim \Delta t, \quad
\left\|e_w^{k}\right\| \lesssim \Delta t^{1/2}, \quad
\left\|\delta e_{\phi}^{k} \right\| \lesssim \Delta t.
\]

Thus, we derive 
\begin{equation}\label{4-33}
\begin{aligned}
\left\|e_\phi^{k }\right\|_{H^{2}}&\leq C\left( \left\|e_\phi^{k }\right\| +\left\|\Delta e_\phi^{k }\right\| \right)\\
&\leq C\left(\left\|e_\phi^{k }\right\|+\left\|R_{b}^{k}\right\|+ \left\|e_{w}^{k}\right\|+\left\|e_\phi^{k-1}\right\| + \left\|\delta e_{\phi}^{k} \right\|\right)\\
&\leq C_{15}\Delta t^{1/2}.
\end{aligned}
\end{equation}

\textbf{Step iii}. Lastly, we derive estimates for $\left\|\phi^{k }\right\|_{L^{\infty}}$ and $ \left\|\textbf{b}^{K }\right\|_{L^{\infty}}$. With the help of \eqref{bH2}, we have
$$\left\|e_{b}^{K}\right\|_{H^{2}}\leq \left\|\textbf{b}^{K}\right\|_{H^{2}}+\left\|\textbf{b}(t_{K})\right\|_{H^{2}}\leq C_{16},$$ 
and we obtain 
\begin{equation*}
\begin{aligned}
\left\|\textbf{b}^{K}\right\|_{L^{\infty}}&\leq \left\|e_{b}^{K}\right\|_{L^{\infty}}+\left\|\textbf{b}(t_{K})\right\|_{L^{\infty}}\\
&\leq  C_{17}\left\|e_{b}^{K}\right\|_{H^{2}}^{3/4}\left\|e_{b}^{K}\right\| ^{1/4} +\left\|\textbf{b}(t_{K})\right\|_{L^{\infty}}\\
&\leq C_{17}C_{16}^{3/4}C_{3}^{1/8}\Delta t^{1/4} +\left\|\textbf{b}(t_{K})\right\|_{L^{\infty}}\\
&\leq \kappa_{b_{3}},
\end{aligned}
\end{equation*}
for $C_{17}C_{16}^{3/4}C_{3}^{1/8}\Delta t^{1/4}\leq 1$, i.e., $\Delta t\leq 1/(C_{17}^{4}C_{16}^{3}C_{3}^{1/2})$. Hence, the bound \(\left\|\mathbf{b}^{K}\right\|_{L^{\infty}} \lesssim \kappa_{b_{3}}\) is established.

Then, we derive the estimate for \(\left\|\phi^{k}\right\|_{L^{\infty}}\) as 
\begin{equation*}
\begin{aligned}
\left\|\phi^{k}\right\|_{L^{\infty}}&\leq \left\|e_{\phi}^{k}\right\|_{L^{\infty}}+\left\|\phi(t_{k})\right\|_{L^{\infty}}\\
&\leq C_{17}\left\|e_{\phi}^{k}\right\|_{H^{2}} ^{3/4}\left\|e_{\phi}^{k}\right\| ^{1/4}+\left\|\phi(t_{k})\right\|_{L^{\infty}}\\
&\leq C_{17}C_{15}^{3/4}\Delta t^{3/8}C_{3}^{1/8}\Delta t^{1/4} +\left\|\phi(t_{k})\right\|_{L^{\infty}}\\
&\leq \kappa_{\phi_{3}},
\end{aligned}
\end{equation*}
provided that $C_{17}C_{15}^{3/4}\Delta t^{3/8}C_{3}^{1/8}\Delta t^{1/4} \leq 1$, which is equivalent to $\Delta t\leq 1/(C_{17} ^{8/5}C_{15}^{6/5} C_{3}^{1/5})$.

Based on the above process, we can derive inequality (\ref{lemma3-4}) for $\Delta t\leq C$. The constants $\kappa_{\phi}$, $\kappa_{b}$, and $C$ are defined by 
\begin{equation*}
\begin{aligned}
&\kappa_{\phi}=\max\left\{\kappa_{\phi_{1}}, \kappa_{\phi_{2}}, \kappa_{\phi_{3}}\right\}, \quad \kappa_{b}=\max\left\{\kappa_{b_{1}}, \kappa_{b_{2}}, \kappa_{b_{3}}\right\},\\
&C= \min \left \{ C_{0}, (C_{6}-C_{5})^2,  (C_{8}-C_{7})^2, (C_{12}-C_{11})^2, \tilde{C},    \frac{1}{ \max \left\{C_{13}, C_{14}^{1/2}\right\}\hat{D}},  \frac{1}{C_{17}^{4}C_{16}^{3}C_{3}^{1/2}}, \frac{1}{C_{17} ^{8/5}C_{15}^{6/5} C_{3}^{1/5}} \right\}.
\end{aligned}
 \end{equation*}

\end{proof}

\begin{The}\label{the3-1}
Suppose the solution of the equations (\ref{2-1}) satisfies the \textbf{Assumption \ref{ass1}}. Then, the \textbf{Scheme I} is unconditionally convergent, specifically,
\begin{equation}\label{e1}
\begin{aligned}
& \frac{\lambda\varepsilon}{2} \left\|\nabla e_{\phi}^{m}\right\|^{2}+\frac{1}{2}\left\|e_{b}^{m}\right\|^{2}+\frac{1}{2}\left\| e_{u}^{m}\right\|^{2}+\frac{\Delta t^{2}}{2} \left\|\nabla e_{p}^{m}\right\|^{2}+  \frac{\lambda M\Delta t}{2}\left\|\nabla e_{w}^{m}\right\|^{2}\\
& + \Delta t\sum_{k=0}^{m}\left(\frac{\varepsilon+4\lambda S}{4\Delta t}  \left\|\delta e_{\phi}^{k}\right\|^{2} +\frac{\lambda\varepsilon}{2\Delta t} \left\|\delta(\nabla e_{\phi}^{k})\right\|^{2}+ \frac{1}{2\sigma\mu}\left\|\nabla\times e_{b}^{k}\right\|^{2}+ \frac{1}{2\Delta t}\left\|\delta e_{b}^{k}\right\|^{2}  + \frac{1}{2\Delta t}\left\|\tilde{e}_{u}^{k}- e_{u}^{k-1}\right\|^{2} + \frac{\nu}{2} \left\|\nabla\tilde{e}_{u}^{k}\right\|^{2}\right)\\
&\leq C\Delta t^{2},
\end{aligned}
\end{equation}
where \(m\) satisfies $0\leq m\leq T/\Delta t$.
\end{The}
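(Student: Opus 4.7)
The plan is to recognize that the bound claimed in (\ref{e1}) is already contained in the inequality (\ref{s-26}) that is established during the induction proof of Lemma \ref{lem3-4}. The left-hand side of (\ref{s-26}) majorizes the left-hand side of (\ref{e1}) by the two non-negative quantities $\frac{\varepsilon}{2}\|e_\phi^m\|^2$ and $\Delta t\sum_k \frac{M}{2}\|e_w^k\|^2$, so dropping them yields (\ref{e1}) with constant $C=C_3$. The real content of the theorem is therefore that the error estimate holds \emph{unconditionally}, and the plan for the proof is to make clear how the ingredients already developed combine to deliver this.

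For a self-contained presentation I would reproduce the key energy identity by testing each error equation in (\ref{4-7}) against natural multipliers: test (\ref{3-20a}) with $\lambda\Delta t e_w^k$ and $\varepsilon\Delta t e_\phi^k$; (\ref{3-20b}) with $\lambda\,\delta e_\phi^k$ and $\Delta t M e_w^k$; (\ref{3-20c}) with $\Delta t\, e_b^k/\mu$; (\ref{3-20d}) with $\Delta t\,\tilde e_u^k$; and square (\ref{3-20e}) to couple the pressure increment to the velocity. Summing the resulting identities produces the master relation (\ref{33}), whose left-hand side exhibits the telescoping dissipations $\delta\|\nabla\phi_e^k\|^2$, $\delta\|e_b^k\|^2$, $\delta\|e_u^k\|^2$, $\delta(\Delta t^2\|\nabla e_p^k\|^2)$, together with the parabolic controls $\|\nabla e_w^k\|^2$, $\|\nabla\times e_b^k\|^2$ and $\|\nabla\tilde e_u^k\|^2$.

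To bound the right-hand side I would split every nonlinear difference via $ab-cd=(a-c)b+c(b-d)$, use Young's inequality to absorb small multiples of the parabolic dissipations into the left-hand side, and dispose of the coupled pressure terms by substituting $\tilde e_u^k = e_u^k + \Delta t\,\delta(\nabla e_p^k) - \Delta t R_e^k$ from (\ref{3-20e}), so that the extra $\Delta t\,\delta(\nabla e_p^k)$ factors are hidden inside $\delta(\Delta t^2\|\nabla e_p^k\|^2)$. The indispensable ingredient at this step is the $L^\infty$ control $\|\phi^{k-1}\|_{L^\infty}\leq\kappa_\phi$, $\|\mathbf{b}^{k-1}\|_{L^\infty}\leq\kappa_b$ from Lemma \ref{lem3-4}, which is what lets the couplings $\phi^{k-1}\nabla w^k\cdot\tilde{\mathbf{v}}^k$, $\mathbf{b}^{k-1}\times\nabla\times\mathbf{b}^k$ and $\nabla\cdot(\phi^{k-1}\mathbf{v}^{k-1})$ be dominated uniformly in $\Delta t$. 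Truncation contributions are $O(\Delta t)$ in norm by Lemma \ref{lemma3-3}; the leftover terms $\Delta t^3\|\nabla w^k\|^2$ and $\Delta t^3\|\nabla\times\mathbf{b}^k\|^2$ are controlled by $O(\Delta t^2)$ after summation thanks to the a priori bounds (\ref{ieq8}). Summing from $k=0$ to $m$, using $e_\phi^0=e_u^0=e_b^0=\nabla e_p^0=0$, and applying the discrete Gronwall inequality (Lemma \ref{L4-1}) then closes the estimate.

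The genuine obstacle is not this final computation but the $L^\infty$ control that makes it admissible without a CFL-type restriction. That step is precisely Lemma \ref{lem3-4}: it runs an induction in which the error estimate (\ref{s-26}) and the uniform bounds on $\|\phi^k\|_{L^\infty}$, $\|\mathbf{b}^k\|_{L^\infty}$ are propagated together by means of auxiliary $H^2$ estimates for $\phi^k$ and $\mathbf{b}^k$ and the quadratic Gronwall-type Lemma \ref{lem2}. Once that bootstrap is complete, Theorem \ref{the3-1} is an immediate corollary of (\ref{s-26}) by discarding non-negative terms.
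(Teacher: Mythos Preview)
Your outline correctly identifies that (\ref{e1}) is dominated by (\ref{s-26}) and that the substantive work sits inside Lemma~\ref{lem3-4}. However, there is a genuine gap in the ``unconditional'' part of the argument. Lemma~\ref{lem3-4} is stated under the hypothesis $\Delta t\le C$; the $L^\infty$ bounds $\|\phi^{k-1}\|_{L^\infty}\le\kappa_\phi$, $\|\mathbf{b}^{k-1}\|_{L^\infty}\le\kappa_b$ are \emph{not} proved for arbitrary $\Delta t$, and therefore neither is (\ref{s-26}). Your proposal treats these bounds as if they held ``uniformly in $\Delta t$'' and then claims the theorem follows immediately, but as written your argument only delivers (\ref{e1}) in the regime $\Delta t\le C$.

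The paper closes this gap by a separate, elementary case: when $\Delta t\ge C$, the stability estimate of Remark~\ref{3-3} together with Assumption~\ref{ass1} shows that the entire left-hand side of (\ref{e1}) is bounded by a fixed constant $C_{18}$ independent of $\Delta t$, and then one simply writes $C_{18}=(C_{18}/C^2)\,C^2\le (C_{18}/C^2)\,\Delta t^2$. This trivial trick is what upgrades the conditional estimate to an unconditional one, and it is missing from your plan. Add this second case and your proof matches the paper's.
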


\begin{proof}
\textbf{Case 1}. Based on the correctness of \textbf{Lemma \ref{lem3-4}}, we conclude that  $\left\|\phi^{k }\right\|_{L^{\infty}}\leq \kappa_{\phi}$ and $\left\|\textbf{b}^{k}\right\|_{L^{\infty}}\leq \kappa_{b}$. Hence, equation (\ref{e1}) is valid under the condition that \(\Delta t \leq C\).

\textbf{Case 2}. If $\Delta t \geq C$, by means of \textbf{Remark \ref{3-3}} and \textbf{Assumption \ref{ass1}}, we derive 
\begin{equation*}\label{4-26}
\begin{aligned}
& \frac{\lambda\varepsilon}{2} \left\|\nabla e_{\phi}^{m}\right\|^{2}+\frac{1}{2}\left\|e_{b}^{m}\right\|^{2}+\frac{1}{2}\left\| e_{u}^{m}\right\|^{2}+\frac{\Delta t^{2}}{2} \left\|\nabla e_{p}^{m}\right\|^{2}+  \frac{\lambda M\Delta t}{2}\left\|\nabla e_{w}^{m}\right\|^{2}\\
& + \Delta t\sum_{k=0}^{m}\left(\frac{\varepsilon+4\lambda S}{4\Delta t}  \left\|\delta e_{\phi}^{k}\right\|^{2} +\frac{\lambda\varepsilon}{2\Delta t} \left\|\delta(\nabla e_{\phi}^{k})\right\|^{2}+ \frac{1}{2\sigma\mu}\left\|\nabla\times e_{b}^{k}\right\|^{2}+ \frac{1}{2\Delta t}\left\|\delta e_{b}^{k}\right\|^{2}  + \frac{1}{2\Delta t}\left\|\tilde{e}_{u}^{k}- e_{u}^{k-1}\right\|^{2}  + \frac{\nu}{2} \left\|\nabla\tilde{e}_{u}^{k}\right\|^{2}\right)\\
& \leq C_{18}=\frac{C_{18}}{C^{2}}C^{2} \leq \frac{C_{18}}{C^{2}}\Delta t^{2}  \leq C \Delta t^{2}.
\end{aligned}
\end{equation*}

The   unconditionally convergent is valid, as demonstrated by \textbf{Case 1} and \textbf{Case 2}.
\end{proof}

\section{A Scheme Based on IEQ Method }

In this section, we focus on the convergence analysis of the unconditionally energy stable IEQ scheme \cite{2019Fast, 2019EfficientLIU}.  The method only requires the nonlinear potential to be bounded from below, thus bypassing the need for artificial extension of the nonlinear potential. Initially, we need to transform the chemical potential \(f(\phi)\) as  
\begin{equation}
\begin{aligned}
& f(\phi) = M(\phi) N, \
&\text{where } M(\phi) = \frac{f(\phi)}{\sqrt{F(\phi) + C}}, \quad N = \sqrt{F(\phi) + C},
\end{aligned}
\end{equation}
where $C$ is chosen such that  $F(\phi) +C > 0$. We treat $N$ as a new variable and take the time derivative of $N$ to obtain 
$$N_{t}=\frac{1}{2}M(\phi)\phi_{t}.$$

Hence, the equations \eqref{mdel1}-\eqref{model2} can be equivalently rewritten as: 
\begin{equation}\label{IEQmodel}
\begin{aligned}
&\phi_{t}+\nabla\cdot(\phi \textbf{v})=M \Delta w, &\text{in} \ \Omega\times (0,T],\\
&w=-\varepsilon \Delta\phi+M(\phi)N, &\text{in} \ \Omega\times (0,T],\\
&N_{t} = \frac{1}{2}M(\phi)\phi_{t}, &\text{in} \ \Omega\times (0,T].
\end{aligned}
\end{equation}


The semi-discrete IEQ scheme (\textbf{Scheme II}): With the initial values $\phi^{0}, \textbf{v}^{0}, \textbf{b}^{0}$, $p^{0}$=0, and $N^{0}=\sqrt{F(\phi^{0}) +C}$, we solve for $\phi^{k}, w^{k}, \textbf{v}^{k}, \textbf{b}^{k}$, and $p^{k}$ through the following steps.

\textbf{Step IEQ-1}. Compute  $\phi^{k}$ and $w^{k}$ from 
\begin{subequations}\label{4-2ieq}
\begin{align}
&\frac{\phi^{k}- \phi^{k-1}}{\Delta t}+\nabla\cdot \left(\phi^{k-1} \textbf{v}^{k-1}\right)-\Delta t\lambda\nabla\cdot \left(\left(\phi^{k-1}\right)^{2}\nabla w^{k}\right)=M\Delta w^{k}, \label{4-2a}\\
&w^{k}=-\varepsilon \Delta\phi^{k}+M\left(\phi^{k-1}\right)N^{k},\label{4-2b}\\
&\frac{\partial\phi^{k}}{\partial \textbf{n}}|_{\partial\Omega}=0, \quad \frac{\partial w^{k}}{\partial \textbf{n}}|_{\partial\Omega}=0,
\end{align}
\end{subequations}
where
\begin{equation}\label{4-3}
N^{k}=N^{k-1}+\frac{1}{2}M\left(\phi^{k-1}\right)\left(\phi^{k}-\phi^{k-1}\right).
\end{equation}
The IEQ scheme is composed of \textbf{Step IEQ-1} and the steps from \textbf{Step 2} to \textbf{Step 4} of Scheme I.
\begin{The}\label{theorem4-1}
Without loss of generality, we set the source term $\textbf{f}$=$\textbf{0}$.  The \textbf{scheme II}, consisting of  \textbf{Step IEQ-1} and \textbf{Step 2}-\textbf{Step 4}, is unconditionally energy stable in the sense that 
\begin{equation}\label{4-44ieq}
\begin{aligned}
&\tilde{E}^{k}- \tilde{E}^{k-1} \leq 0,\
&\text{where } \tilde{E}^{k}=\frac{\lambda\varepsilon}{2}\left\|\nabla\phi^{k}\right\|^{2}+\frac{1}{2\mu}\left\|\textbf{b}^{k}\right\|^{2}+ \frac{1}{2}\left\|\textbf{v}^{k}\right\|^{2}+ \lambda \left\|N^{k} \right\|^{2} +\frac{\Delta t^{2}}{2}\left\|\nabla p^{k}\right\|^{2}.
\end{aligned}
\end{equation}

\end{The}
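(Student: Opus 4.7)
The plan is to mirror the proof of Theorem \ref{Theorem3-1}, with one essential modification: the Taylor-expansion bound for the nonlinear potential, which in Scheme I required the parameter restriction $S \geq C_1/2$, is replaced by an exact polarization identity for the auxiliary variable $N$. Since Steps 2 through 4 of Scheme II coincide verbatim with those of Scheme I, the inner product manipulations leading to \eqref{3-8}--\eqref{3-12} carry over unchanged and produce identical dissipation for $\textbf{b}^k$, $\textbf{v}^k$, $p^k$ as well as the same cancellation of the Lorentz coupling and the convective term.

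For the phase-field contribution, I would test \eqref{4-2a} with $\lambda \Delta t\, w^k$ and \eqref{4-2b} with $\lambda(\delta \phi^k)$, exactly as in \eqref{3-6}--\eqref{3-7}. The result is structurally identical to the Scheme I computation except that the two nonlinear terms $\lambda(f(\phi^{k-1}), \delta\phi^k) + \lambda S\|\delta\phi^k\|^2$ are now replaced by the single cross term $\lambda(M(\phi^{k-1}) N^k, \delta \phi^k)$. Handling this cross term is the only novelty.

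The key IEQ step is to take the $L^2$ inner product of \eqref{4-3} with $2\lambda N^k$. Using the polarization identity $2(N^k - N^{k-1}, N^k) = \|N^k\|^2 - \|N^{k-1}\|^2 + \|\delta N^k\|^2$, this yields the exact algebraic identity
\begin{equation*}
\lambda\bigl(\delta \|N^k\|^2 + \|\delta N^k\|^2\bigr) = \lambda (M(\phi^{k-1}) \delta \phi^k,\, N^k),
\end{equation*}
whose right-hand side cancels the cross term from \eqref{4-2b} \emph{exactly}. Unlike the Taylor bound \eqref{3-13}, no smoothness hypothesis on $F$, no stabilization parameter, and no constraint on $\Delta t$ enter here; the identity is purely algebraic. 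This is the reason the IEQ scheme needs no restriction analogous to $S \geq C_1/2$.

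Combining the two phase-field tests and the $N$-identity with the inherited inner products \eqref{3-8}--\eqref{3-11} and the coupling estimates \eqref{3-12} from Scheme I's Steps 2--4, all cross terms telescope and the result reads
\begin{equation*}
\begin{aligned}
&\tilde{E}^k - \tilde{E}^{k-1} + \lambda \Delta t M \|\nabla w^k\|^2 + \frac{\lambda \varepsilon}{2}\|\delta(\nabla \phi^k)\|^2 + \lambda \|\delta N^k\|^2 \\
&\qquad + \frac{1}{2\mu}\|\delta \textbf{b}^k\|^2 + \frac{\Delta t}{\sigma \mu^2}\|\nabla \times \textbf{b}^k\|^2 + \frac{1}{4}\|\tilde{\textbf{v}}^k - \textbf{v}^{k-1}\|^2 + \nu \Delta t \|\nabla \tilde{\textbf{v}}^k\|^2 \leq 0,
\end{aligned}
\end{equation*}
which implies \eqref{4-44ieq}. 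Because every step that differs from Theorem \ref{Theorem3-1} is a polarization identity rather than an inequality, there is no genuine obstacle: the proof is structurally cleaner than that of Scheme I, trading the hypothesis $S \geq C_1/2$ for the introduction of the bookkeeping variable $N$.
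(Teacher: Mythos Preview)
Your proposal is correct and follows essentially the same approach as the paper: test \eqref{4-2a} with $\lambda\Delta t\,w^k$, \eqref{4-2b} with $\lambda\,\delta\phi^k$, and \eqref{4-3} with $2\lambda N^k$ to obtain the exact polarization identity that cancels the cross term $\lambda(M(\phi^{k-1})N^k,\delta\phi^k)$, then combine with the inherited estimates \eqref{3-8}--\eqref{3-12} from Steps 2--4. Your final dissipation inequality coincides with the paper's \eqref{4-6}, and your observation that the IEQ polarization identity replaces the Taylor bound \eqref{3-13} (thereby removing the need for any condition like $S\geq C_1/2$) is precisely the point.
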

\begin{proof}
Taking the $L^{2}$ inner product of  equation  (\ref{4-2a}) with $\Delta t\lambda w^{k}$,  equation  (\ref{4-2b}) with $\lambda \left(\delta \phi^{k} \right)$, and  equation (\ref{4-3}) with $2\lambda N^{k}$, we derive 
\begin{equation}\label{4-5ieq}
\begin{aligned}
&\lambda \left(\delta\phi^{k},  w^{k}\right)-\Delta t \lambda\left(\phi^{k-1}\textbf{v}^{k-1}, \nabla w^{k}\right)+\Delta t^{2} \lambda^{2}\left\|\phi^{k-1}\nabla w^{k}\right\|^{2}=-\Delta t \lambda M\left\|\nabla w^{k}\right\|^{2},\\
& \frac{\lambda\varepsilon}{2}\left(\delta\left\|\nabla \phi^{k}\right\|^{2}+ \left\|\delta(\nabla \phi^{k})\right\|^{2}  \right) +\lambda \left(M\left(\phi^{k-1}\right)N^{k}, \delta\phi^{k}\right) =\lambda \left(\delta\phi^{k}, w^{k}\right),\\
&\lambda \left(\delta\left\|N^{k}\right\|^{2} +\left\|\delta N^{k}\right\|^{2} \right)=\lambda \left(M\left(\phi^{k-1}\right)\delta\phi^{k}, N^{k} \right).
\end{aligned}
\end{equation}

Combining equations (\ref{3-8})-(\ref{3-11}) and  (\ref{4-5ieq}), with the help of equations (\ref{3-12})-(\ref{3-13}), we conclude 
\begin{equation}\label{4-6}
\begin{aligned}
&\frac{\lambda\varepsilon}{2}\delta\left\|\nabla\phi^{k}\right\|^{2}+\frac{1}{2\mu}\delta\left\|\textbf{b}^{k}\right\|^{2}+ \frac{1}{2}\delta\left\|\textbf{v}^{k}\right\|^{2}+ \lambda \delta\left\|N^{k}\right\|^{2}+\frac{\Delta t^{2}}{2}\delta\left\|\nabla p^{k}\right\|^{2} +\lambda\Delta tM\left\|\nabla w^{k}\right\|^{2}+\frac{\lambda\varepsilon}{2}\left\|\delta(\nabla\phi^{k})\right\|^{2}\\
&+\lambda\left\|\delta N^{k}\right\|^{2}+\frac{1}{2\mu}\left\|\delta\textbf{b}^{k}\right\|^{2}+\frac{ \Delta t}{\sigma\mu^{2}}\left\|\nabla\times\textbf{b}^{k}\right\|^{2}+\frac{1}{4}\left\|\tilde{\textbf{v}}^{k}-\textbf{v}^{k-1}\right\|^{2}+ \nu\Delta t\left\|\nabla\tilde{\textbf{v}}^{k}\right\|^{2} \leq 0.
\end{aligned}
\end{equation}
\end{proof}
{\Rem  \label{Remark4-1}
Summing up the above inequality (\ref{4-6}) from \(k=0\) to \(m\) (\(m \leq \frac{T}{\Delta t}\)), we obtain 
\begin{equation}\label{4sm-6}
\begin{aligned}
&\frac{\lambda\varepsilon}{2} \left\|\nabla\phi^{m}\right\|^{2} +\frac{1}{2\mu} \left\|\textbf{b}^{m}\right\|^{2} + \frac{1}{2} \left\|\textbf{v}^{m}\right\|^{2} + \lambda \left\|N^{m}\right\|^{2} +\frac{\Delta t^{2}}{2} \left\|\nabla p^{m}\right\|^{2} +\sum_{k=0}^{m}\Big( \lambda\Delta tM\left\|\nabla w^{k}\right\|^{2}+\frac{\lambda\varepsilon}{2}\left\|\delta(\nabla\phi^{k}) \right\|^{2}\\
&+\lambda\left\|\delta N^{k} \right\|^{2}+\frac{1}{2\mu}\left\|\delta\textbf{b}^{k} \right\|^{2}+\frac{ \Delta t}{\sigma\mu^{2}}\left\|\nabla\times\textbf{b}^{k}\right\|^{2} +\frac{1}{4}\left\|\tilde{\textbf{v}}^{k}-\textbf{v}^{k-1}\right\|^{2}+ \nu\Delta t\left\|\nabla\tilde{\textbf{v}}^{k}\right\|^{2}\Big) \leq C_{ieq},
\end{aligned}
\end{equation}
where $C_{ieq}$ is a general positive constant. From inequality (\ref{4sm-6}), we derive the following 
\begin{equation}\label{ieq8}
\sum_{k=0}^{m}\lambda\Delta tM  \left\|\nabla w^{k}\right\|^{2}\leq C_{ieq},\quad  \sum_{k=0}^{m}\frac{ \Delta t}{\sigma\mu^{2}} \left\|\nabla\times\textbf{b}^{k}\right\|^{2}\leq C_{ieq}.
\end{equation}
}

\subsection{Convergence analysis}
We rewrite the Cahn-Hilliard equations as 
\begin{subequations}\label{4-7ieq}
\begin{align}
&\frac{\delta\phi(t_{k})}{\Delta t}+\nabla\cdot\left(\phi(t_{k-1}) \textbf{v}(t_{k-1})\right)-\Delta t\lambda\nabla\cdot \left(\phi(t_{k-1})^{2}\nabla w(t_{k})\right)
=M \Delta w(t_{k})+R_{a}^{k}, \label{4-9a} \\
&-w(t_{k})-\varepsilon \Delta\phi(t_{k})+M(\phi (t_{k-1}))N(t_{k})=R_{f}^{k} , \label{4-9b} \\
&\frac{\delta N(t_{k})}{\Delta t}=\frac{1}{2}M(\phi (t_{k-1}))\frac{\delta\phi (t_{k})}{\Delta t} +R_{g}^{k}, \label{4-9c}
\end{align}
\end{subequations}
where the truncation errors are given by 
\begin{equation}\label{4-8ieq}
\left\{
\begin{aligned}
&R_{a}^{k} =\frac{\delta\phi(t_{k}) }{\Delta t}-\phi_{t}(t_{k})-\delta(\nabla\cdot(\phi(t_{k}) \textbf{v}(t_{k})))-\Delta t\lambda\nabla\cdot \left(\phi(t_{k-1})^{2}\nabla w(t_{k})\right),\\
&R_{f}^{k} =-\left(\delta M(\phi (t_{k}))\right)N (t_{k}),\\
&R_{g}^{k} = \frac{\delta N (t_{k})}{\Delta t}-N_{t} (t_{k})-\frac{1}{2}M(\phi (t_{k-1}))\frac{\delta\phi (t_{k})}{\Delta t} +\frac{1}{2}M(\phi (t_{k }))\phi_{t} (t_{k}).
\end{aligned}
\right.
\end{equation}
We obtain the error equations by subtracting equation (\ref{4-9b}) from equation (\ref{4-2b}), and equation (\ref{4-9c}) from equation \(N^{k}\) in \eqref{4-3} as 
\begin{subequations}\label{4-9ieq}
\begin{align}
&-e_{w}^{k}-\varepsilon\Delta e_{\phi}^{k}+e_{M}^{k-1}N (t_{k })+M\left(\phi ^{k-1}\right)e_{N}^{k}=R_{f}^{k}, \label{4-11a} \\
&\delta e_{N}^{k} =\frac{1}{2}\left(e_{M}^{k-1} \delta\phi (t_{k}) +M\left(\phi ^{k-1}\right) \delta e_{\phi}^{k} \right)+\Delta tR_{g}^{k}, \label{4-11b}
\end{align}
\end{subequations}
where the error terms are defined as
$$e_{M}^{k} = M(\phi(t_{k}))-M\left(\phi ^{k}\right),\quad e_{N}^{k} = N(t_{k})-N ^{k}.$$

{\Ass \label{ass4} Based on the \textbf{Assumption \ref{ass1}}, we  further impose  the following regularity conditions 
\begin{equation}\label{ieq-re}
\begin{aligned}
N\in L^{\infty}(0,T,W^{1, \infty}(\Omega)), \quad  N_{tt} \in L^{2}(0,T, L^{2}(\Omega)).
\end{aligned}
\end{equation}
}
{\lem \label{lemma4-1}
Under the above assumption, the truncation errors are bounded as follows 
\begin{equation}
\begin{aligned}
 \left\|R_{f}^{k}\right\|_{H^{1}}+\left\|R_{g}^{k}\right\| \lesssim \Delta t, \quad 0  \leq k  \leq \frac{T}{\Delta t}.
\end{aligned}
\end{equation}
}
{\lem \label{lemma4-2}
Suppose the following conditions hold:
\begin{enumerate}
    \item \(F(\phi)\) is uniformly bounded from below: \(F(\phi) > -A\) for any \(\phi \in (-\infty, \infty)\);
    \item \(F(\phi) \in C^{2}(-\infty, \infty)\);
    \item There exists a positive constant \(C_{c}\) such that
    \begin{equation}
    \begin{aligned}
    \max_{k \leq K} \left( \left\|\phi(t_{k})\right\|_{L^{\infty}}, \left\|\phi^{k}\right\|_{L^{\infty}} \right) \leq C_{c}.
    \end{aligned}
    \end{equation}
\end{enumerate}
Then, it follows that
\begin{equation}
\begin{aligned}
\left\| M(\phi(t_{k})) - M\left(\phi^{k}\right) \right\| \leq \hat{C}_{c} \left\|\phi(t_{k}) - \phi^{k}\right\|,
\end{aligned}
\end{equation}
for \(k \leq T\), where \(\hat{C}_{c}\) is a positive number that depends only on \(C_{c}\), \(A\), and \(C\).

}

{\lem \label{lemma4-3}
We assume the following conditions:
\begin{enumerate}
    \item \(F(\phi)\) is uniformly bounded from below: \(F(\phi) > -A\) for any \(\phi \in (-\infty, \infty)\);
    \item \(F(\phi) \in C^{3}(-\infty, \infty)\);
    \item There exists a positive constant \(D_{0}\) such that
    \begin{equation}
    \begin{aligned}
    \max_{k \leq K} \left( \left\|\phi(t_{k})\right\|_{L^{\infty}}, \left\|\phi^{k}\right\|_{L^{\infty}}, \left\|\nabla\phi(t_{k})\right\|_{L^{3}} \right) \leq D_{0}.
    \end{aligned}
    \end{equation}
\end{enumerate}
Therefore,
\begin{equation}
\begin{aligned}
\left\|\nabla M(\phi(t_{k})) - \nabla M\left(\phi^{k}\right)\right\| \leq \hat{D}_{0} \left( \left\|\phi(t_{k}) - \phi^{k}\right\| + \left\|\nabla\phi(t_{k}) - \nabla\phi^{k}\right\| \right),
\end{aligned}
\end{equation}
for \(k \leq T\), where \(\hat{D}_{0}\) is a positive number that is only dependent on \(\Omega\), \(D_{0}\), \(A\), and \(C\).

}

The proofs of the above three lemmas are detailed in \cite{2020Convergence}.

{\lem
Given that the solution to the considered model  satisfies \textbf{Assumptions \ref{ass1}} and \textbf{\ref{ass4}}, there exists a general constant \(C\) such that if \(\Delta t \leq C\), the solution \(\mathbf{b}^{k}\) of \textbf{Scheme II }satisfies 
\begin{equation}\label{ieq-lemma}
\left\|\mathbf{b}^{k}\right\|_{L^{\infty}} \leq \Pi_{b}, \quad k = 0, \cdots, \frac{T}{\Delta t}.
\end{equation}
}
\begin{proof}
The process is similar to \textbf{Lemma 3.4}, and we will not go into the details further.
\end{proof}

{\lem \label{lemma4-5}
Under the following assumptions:
\begin{enumerate}
    \item \(F(\phi)\) is uniformly bounded from below: \(F(\phi) > -A\) for any \(\phi \in (-\infty, \infty)\);
    \item \(F(\phi) \in C^{3}(-\infty, \infty)\);
    \item The exact solutions of the reconstructed model by the IEQ mthod  satisfy \textbf{Assumptions \ref{ass1}} and \textbf{\ref{ass4}},
\end{enumerate}
there exists a positive constant \(C_{e}\) such that for \(\Delta t \leq C_{e}\), the solution \(\phi^{k}\) of \textbf{Scheme II} is uniformly bounded as 
\begin{equation}\label{ieq418}
\left\|\phi^{k}\right\|_{L^{\infty}} \leq \Pi_{\phi}, \quad k = 0, 1, \cdots, \frac{T}{\Delta t}.
\end{equation}
}

\begin{proof}
Similarly, we employ mathematical induction to prove this lemma.

\textbf{Step $\star$}. When $k$=0, we  obtain  that $\left\|\phi^{0}\right\|_{L^{\infty}}=\left\|\phi(t_{0})\right\|_{L^{\infty}} \leq \Pi_{\phi_{1}}$. Additionally, we assume that $\left\|\phi^{k-1}\right\|_{L^{\infty}} \leq \Pi_{\phi_{2}}$, for all  $k\leq \frac{T}{\Delta t}$. Subsequently, we will prove that \(\left\|\phi^{k}\right\|_{L^{\infty}} \leq \Pi_{\phi_{3}}\) also holds through the following process.

By taking the $L^{2}$ inner product of  equation (\ref{4-11a}) with  $\lambda \left(\delta e_{\phi}^{k} \right)$ and $ \Delta t M e_{w}^{k}$ respectively, we derive 
\begin{subequations}\label{ieq-1}
\begin{align}
&-\lambda \left(\delta e_{\phi}^{k} ,  e_{w}^{k}\right)+\lambda\varepsilon \left(\nabla e_{\phi}^{k}, \nabla \left(\delta e_{\phi}^{k} \right)\right ) =-\lambda\left(e_{M}^{k-1}N (t_{k })+M\left(\phi ^{k-1}\right)e_{N}^{k}, \delta e_{\phi}^{k} \right) +\lambda\left(R_{f}^{k}, \delta e_{\phi}^{k} \right),\\
&\Delta t M\left\|e_{w}^{k}\right\|^{2}-\varepsilon \Delta t M \left( \nabla e_{\phi}^{k}, \nabla e_{w}^{k}\right) =  \Delta t M\left( e_{M}^{k-1}N (t_{k })+M\left(\phi ^{k-1}\right)e_{N}^{k}, e_{w}^{k}\right) -\Delta t M \left(e_{w}^{k}, R_{f}^{k}\right).
\end{align}
\end{subequations}

By taking the $L^{2}$ inner product of equation  (\ref{4-11b}) with $2\lambda e_{N}^{k}$, we obtain 
\begin{equation}\label{ieq-2}
\begin{aligned}
& \lambda\left(\delta\left\| e_{N}^{k}\right\|^{2}+ \left\|\delta e_{N}^{k}\right\|^{2}\right)=\lambda\left(e_{M}^{k-1}\delta\phi (t_{k})+M\left(\phi ^{k-1}\right)\delta e_{\phi}^{k}, e_{N}^{k}\right)+2\lambda\Delta t\left(R_{g}^{k}, e_{N}^{k}\right).
\end{aligned}
\end{equation}

By combining equations (\ref{4-12}), (\ref{ieq-1}), (\ref{ieq-2}), (\ref{4-14}), (\ref{4-15}), and (\ref{4-16}), we establish 
\begin{equation}\label{ieq-03}
\begin{aligned}
& \lambda M\Delta t\left\|\nabla e_{w}^{k}\right\|^{2} +\frac{\varepsilon }{2} \left(\delta \left\|e_{\phi}^{k}\right\|^{2} +\left\|\delta e_{\phi}^{k} \right\|^{2}\right)+\frac{\lambda\varepsilon }{2} \left(\delta \left\|\nabla e_{\phi}^{k}\right\|^{2} +\left\|\delta(\nabla e_{\phi}^{k}) \right\|^{2}\right)+ \Delta tM\left\|e_{w}^{k}\right\|^{2}+\frac{1}{2}\left(\delta\left\|e_{b}^{k}\right\|^{2} +\left\|\delta e_{b}^{k} \right\|^{2}\right)\\
&+\frac{ \Delta t}{\sigma\mu}\left\|\nabla\times e_{b}^{k}\right\|^{2}+\frac{1}{2}\left(\delta \left\|e_{u}^{k}\right\|^{2} + \left\|\tilde{e}_{u}^{k}- e_{u}^{k-1}\right\|^{2}\right) +\Delta t\nu\left\|\nabla\tilde{e}_{u}^{k}\right\|^{2}+\frac{\Delta t^{2}}{2}\left(\delta \left\|\nabla e_{p}^{k}\right\|^{2} \right)+\lambda\left(\delta\left\|e_{N}^{k}\right\|^{2} + \left\|\delta e_{N}^{k} \right\|^{2}\right)\\
&= (\rm term \, A)+ (\rm term \, B)+ (\rm term \, D) +(\rm term \, E)+  (\rm term \, F)\\
&\quad  + \lambda \left(R_{f}^{k}, \delta e_{\phi}^{k} \right)- \Delta t M \left(R_{f}^{k}, e_{w}^{k}\right)+ \Delta t \left(R_{c}^{k}, e_{b}^{k} \right)+ \Delta t\left(R_{d}^{k}, \tilde{e}_{u}^{k}\right)   &(: \text{term } H)\\
&\quad -\lambda\left(e_{M}^{k-1}N(t_{k }), \delta e_{\phi}^{k} \right)+\Delta t M\left( e_{M}^{k-1}N (t_{k })+M(\phi ^{k-1})e_{N}^{k}, e_{w}^{k}\right)  &(: \text{term } I)\\
&\quad  +\lambda\left(e_{M}^{k-1}\delta\phi (t_{k}) , e_{N}^{k}\right)+2 \lambda\Delta t\left(R_{g}^{k}, e_{N}^{k}\right).  &(: \text{term } J)
\end{aligned}
\end{equation}

We derive the following estimates for the right-hand sides  using \textbf{Assumption \ref{ass4}} and \textbf{Lemma \ref{lemma4-1}}. For the \text{term} H, we first give the following estimation 
\begin{equation*}
\begin{aligned}
\left(R_{f}^{k}, \delta e_{\phi}^{k} \right)&= \Delta t \left(R_{f}^{k}, \frac{\delta e_{\phi}^{k} }{\Delta t} \right)  \\
&\leq\Delta t\left| \left(R_{f}^{k}, M\Delta e_{w}^{k}+R_{a}^{k}- \nabla\cdot \left(\phi(t_{k-1})\textbf{v}(t_{k-1})-\phi^{k-1}\textbf{v}^{k-1}\right)+\Delta t\lambda\nabla\cdot   \left(\phi(t_{k-1})^{2}\nabla w(t_{k})-\left(\phi^{k-1}\right)^{2}\nabla w^{k}\right)  \right) \right| \\
& \leq \Delta tM \left|\left(\nabla R_{f}^{k}, \nabla e_{w}^{k}\right) \right| +\Delta t\left|\left( R_{f}^{k},  R_{a}^{k}\right)  \right|+ \Delta t\left|\left(\nabla R_{f}^{k},  e_{\phi}^{k-1}\textbf{v}(t_{k-1}) +\phi^{k-1}e_{u}^{k-1}\right)  \right|\\
&\quad + \Delta t^{2} \lambda \left| \left(\nabla R_{f}^{k},  \phi(t_{k-1})^{2} \nabla e_{w}^{k}   + \phi(t_{k-1})^{2} \nabla w^{k} -\left(  \phi^{k-1} \right)^{2}\nabla w^{k}  \right) \right| \\
&\leq \Delta tM \left\|\nabla R_{f}^{k}\right\|\left\|\nabla e_{w}^{k}\right\| +\Delta t\left\|R_{f}^{k}\right\|\left\|R_{a}^{k}\right\|+\Delta t\left\|\nabla R_{f}^{k}\right\| \left( \left\|e_{\phi}^{k-1}\right\|\left \|\textbf{v}(t_{k-1})\right\|_{ L^{\infty}} +\left\|\phi^{k-1}\right\|_{L^{\infty}} \left\|e_{u}^{k-1}\right\|  \right)     \\
&\quad  + \Delta t^{2}\lambda \left\|\nabla R_{f}^{k}\right\|\left(\left\|\phi(t_{k-1})^{2}\right\|_{L^{\infty}}\left\|\nabla e_{w}^{k} \right\| +\left\|\phi(t_{k-1})^{2}\right\|_{L^{\infty}}\left\|\nabla w^{k}\right\|+  \left\|\left(\phi^{k-1}\right)^{2}\right\|_{L^{\infty}}\left\|\nabla w^{k}\right\|   \right)\\
&\leq \frac{\lambda M \Delta t }{16}\left\|\nabla e_{w}^{k}\right\|^{2}+ C\left(\Delta t\left\|e_{\phi}^{k-1}\right\|^{2}+ \Delta t\left\|e_{u}^{k-1}\right\|^{2} +\Delta t^{2}+\Delta t^{3}\left\|\nabla w^{k}\right\| ^{2}  \right).
\end{aligned}
\end{equation*}
Therefore, we obtain the estimate of the \text{term } H as 
\begin{equation}\label{ieq-12}
\begin{aligned}
(\text{term } H) &\leq \frac{\lambda M \Delta t }{16}\left\|\nabla e_{w}^{k}\right\|^{2}+ \frac{\Delta tM}{4}\left\| e_{w}^{k}\right\|^{2} +\frac{\Delta t\nu}{8}\left\|\nabla\tilde{e}_{u}^{k}\right\|^{2}+ C\left( \Delta t\left\|e_{\phi}^{k-1}\right\|^{2}+ \Delta t\left\|e_{u}^{k-1}\right\|^{2}+  \Delta t\left\| e_{b}^{k}\right\|^{2} +\Delta t^{2}+\Delta t^{3}\left\|\nabla w^{k}\right\| ^{2} \right).
\end{aligned}
\end{equation}

For the \text{term } I=$-\lambda\left(e_{M}^{k-1}N(t_{k }), \delta e_{\phi}^{k} \right)+\Delta t M\left( e_{M}^{k-1}N (t_{k })+M\left(\phi ^{k-1}\right)e_{N}^{k}, e_{w}^{k}\right)=I_{1}+I_{2}$, we derive the following estimates  using equation (\ref{3-20a}) as
\begin{equation*}
\begin{aligned}
I_{1}&=-\lambda\left(e_{M}^{k-1}N(t_{k }), \delta e_{\phi}^{k} \right)\\
&=-\lambda\Delta t\left(e_{M}^{k-1}N(t_{k }), \frac{\delta e_{\phi}^{k} }{\Delta t}\right)\\
&\leq \left|\lambda\Delta t\left(e_{M}^{k-1}N(t_{k }), M\Delta e_{w}^{k}+R_{a}^{k}- \nabla\cdot \left(\phi(t_{k-1})\textbf{v}(t_{k-1})-\phi^{k-1}\textbf{v}^{k-1}\right)+\Delta t\lambda\nabla\cdot   \left(\phi(t_{k-1})^{2}\nabla w(t_{k})-\left(\phi^{k-1}\right)^{2}\nabla w^{k}\right) \right)\right|.
\end{aligned}
\end{equation*}
By \textbf{Lemma \ref{lemma4-2}} and \textbf{Lemma \ref{lemma4-3}}, we have 
\begin{equation*}
\begin{aligned}
\left|\lambda\Delta t\left(e_{M}^{k-1}N(t_{k }), M\Delta e_{w}^{k}\right)\right|&\leq\left|M\lambda\Delta t\left(\nabla (e_{M}^{k-1}N(t_{k })),  \nabla e_{w}^{k}\right)\right|\\
&\leq \left|M\lambda\Delta t\left(N(t_{k })\nabla e_{M}^{k-1}+ \nabla N(t_{k })e_{M}^{k-1}, \nabla e_{w}^{k}\right)\right|\\
&\leq M\lambda \Delta t\left\|N(t_{k })\right\|_{L^{\infty}}\left\|\nabla e_{M}^{k-1}\right\| \left\|\nabla e_{w}^{k}\right\|+M\lambda \Delta t  \left\|\nabla N(t_{k })\right\|_{L^{\infty}}\left\| e_{M}^{k-1}\right\| \left\|\nabla e_{w}^{k}\right\|\\
&\leq \frac{\lambda M\Delta t}{16}\left\|\nabla e_{w}^{k }\right \|^{2}+C\left(\Delta t\left\|\nabla e_{\phi}^{k-1} \right\|^{2}+ \Delta t\left\| e_{\phi}^{k-1} \right\|^{2} \right).
\end{aligned}
\end{equation*}
Then, we derive the estimates for  $\rm I_{1}$ and $\rm I_{2}$ as 
\begin{equation*}
\begin{aligned}
&I_{1}\leq \frac{\lambda M\Delta t}{16}\left\|\nabla e_{w}^{k } \right\|^{2}+ C\left(\Delta t\left\|\nabla e_{\phi}^{k-1} \right\|^{2}+\Delta t^{2}\left\|\nabla e_{w}^{k }\right \|^{2}+ \Delta t\left\| e_{\phi}^{k-1} \right\|^{2}+\Delta t \left\| e_{u}^{k-1}\right \|^{2} +\Delta t^{2}+ \Delta t^{3}\left\|\nabla w^{k}\right\| ^{2} \right),\\
&I_{2}\leq  \frac{\Delta tM}{8}\left \| e_{w}^{k } \right\|^{2}+ C\left( \Delta t\left\| e_{\phi}^{k-1}\right \|^{2}+\Delta t\left\|e_{N}^{k}\right\|^{2}  \right),
\end{aligned}
\end{equation*}
where $\left\|M\left(\phi ^{k-1}\right)\right\|_{L^{\infty}}$ is bounded due to  the fact that $\left\|\phi^{k-1}\right\|_{L^{\infty}}\leq\Pi_{\phi_{2}}$. Hence, we obtain 
\begin{equation}\label{ieq-13}
\begin{aligned}
(\text{term } I) &\leq \frac{\lambda M\Delta t}{16}\left\|\nabla e_{w}^{k } \right\|^{2}+ \frac{\Delta tM}{8}\left \| e_{w}^{k } \right\|^{2}\\
&\quad + C\left(\Delta t\left\|\nabla e_{\phi}^{k-1} \right\|^{2}+\Delta t^{2}\left\|\nabla e_{w}^{k }\right \|^{2}+ \Delta t\left\| e_{\phi}^{k-1} \right\|^{2}+\Delta t \left\| e_{u}^{k-1}\right \|^{2}+\Delta t\left\|e_{N}^{k}\right\|^{2}  +\Delta t^{2}+\Delta t^{3}\left\|\nabla w^{k}\right\| ^{2}  \right).
\end{aligned}
\end{equation}

The estimate of the term J is derived as 
\begin{equation}\label{ieq-14}
\begin{aligned}
(\text{term } J) &\leq \lambda \left\|e_{M}^{k-1}\right\|_{L^{4}}\left\|\delta\phi (t_{k}) \right\|_{L^{4}}\left\|e_{N}^{k}\right\|+2\lambda \Delta t\left\|R_{g}^{k}\right\|\left\|e_{N}^{k}\right\|\\
&\leq C\left(\Delta t\left\|e_{\phi}^{k-1}\right\|^{2}+\Delta t\left\|\nabla e_{\phi}^{k-1}\right\|^{2}+\Delta t\left\|e_{N}^{k}\right\|^{2} +\Delta t^{2} \right).\\
\end{aligned}
\end{equation}

By combining equations (\ref{ieq-12})-(\ref{ieq-14}) with equation (\ref{ieq-03}), we have 
\begin{equation}\label{ieq-023}
\begin{aligned}
&\frac{\lambda M\Delta t}{2}\left(\delta\left\|\nabla e_{w}^{k}\right\|^{2}   \right) +\frac{\varepsilon }{2} \left(\delta\left\|e_{\phi}^{k}\right\|^{2} +\left\|\delta e_{\phi}^{k} \right\|^{2}\right) +\frac{\lambda\varepsilon}{2}\left(\delta \left\|\nabla e_{\phi}^{k}\right\|^{2}  +\left\|\delta( \nabla e_{\phi}^{k} )\right\|^{2}\right)  + \frac{\Delta tM}{2}\left\|e_{w}^{k}\right\|^{2}+\frac{1}{2}\left(\delta \left\|e_{b}^{k}\right\|^{2} +\left\|\delta e_{b}^{k} \right\|^{2}\right)\\
& +\frac{ \Delta t}{2\sigma\mu}\left\|\nabla\times e_{b}^{k}\right\|^{2}+ \frac{\Delta t\nu}{2}\left\|\nabla\tilde{e}_{u}^{k}\right\|^{2}+\frac{1}{2}\left(\delta\left\|e_{u}^{k}\right\|^{2} + \left\|\tilde{e}_{u}^{k}- e_{u}^{k-1}\right\|^{2}\right)  +\frac{\Delta t^{2}}{2}\left(\delta\left\|\nabla e_{p}^{k}\right\|^{2} \right)+\lambda\left(\delta\left\|e_{N}^{k}\right\|^{2} + \left\|\delta e_{N}^{k} \right\|^{2}\right)\\
& \leq C \Delta t \Big(\Delta t\left\|\nabla e_{w}^{k }\right\|^{2}+\left\| e_{\phi}^{k}\right\|^{2}+\left\|  e_{\phi}^{k-1}\right\|^{2} +\left\|\nabla e_{\phi}^{k-1}\right\|^{2} +\left\|\nabla e_{\phi}^{k}\right\|^{2}  +\left\|e_{b}^{k-1}\right\|^{2}+\left\|e_{b}^{k }\right\|^{2} + \left\| e_{u}^{k}\right\|^{2}+\left\| e_{u}^{k-1}\right\|^{2}\\
&\quad+\Delta t^{2}\left\|\nabla e_{p}^{k}\right\|^{2}+ \Delta t^{2}\left\|\nabla e_{p}^{k-1}\right\|^{2}  +\left\|e_{N}^{k}\right\|  ^{2}   \Big)+ C\left(\Delta t^{3}\left\|\nabla w^{k}\right\| ^{2}+\Delta t^{3}\left\|\nabla\times \textbf{b}^{k}\right\| ^{2}  \right)+ C\Delta t^{2},\\
\end{aligned}
\end{equation}
where we add the term $\frac{\lambda M\Delta t}{2}\left\|\nabla e_{w}^{k-1}\right\|^{2}$ such that the inequality  still holds, consistent with \textbf{Lemma 3.1}.

Summing the above inequality from \(k=0\) to \(m\),  and using the initial conditions
$$\left\|\nabla e_{w}^{0}\right\|^{2}=\left\|e_{\phi}^{0}\right\|^{2}=\left\|\nabla e_{\phi}^{0}\right\|^{2}=\left\|e_{b}^{0}\right\|^{2}=\left\|e_{u}^{0}\right\|^{2}=\left\|\nabla e_{p}^{0}\right\|^{2}=\left\|e_{N}^{0}\right\|^{2}=0,$$
combining \textbf{Lemma 3.1} and \textbf{Remark \ref{Remark4-1}},we derive the following inequality 
\begin{equation}\label{ieqs-1}
\begin{aligned}
&\frac{\varepsilon}{2} \left\|e_{\phi}^{m}\right\|^{2}+\frac{\lambda\varepsilon}{2} \left\|\nabla e_{\phi}^{m}\right\|^{2}+\frac{1}{2}\left\|e_{b}^{m}\right\|^{2}+\frac{1}{2}\left\| e_{u}^{m}\right\|^{2}+\frac{\Delta t^{2}}{2} \left\|\nabla e_{p}^{m}\right\|^{2}+  \frac{\lambda M\Delta t}{2}\left\|\nabla e_{w}^{m}\right\|^{2}+\lambda \left\|e_{N}^{m}\right\|^{2} + \Delta t\sum_{k=0}^{m}\Bigg(\frac{\varepsilon }{2\Delta t}  \left\|\delta e_{\phi}^{k}\right\|^{2} \\
&+\frac{\lambda\varepsilon}{2\Delta t} \left\|\delta(\nabla e_{\phi}^{k})\right\|^{2}+ \frac{1}{2\sigma\mu}\left\|\nabla\times e_{b}^{k}\right\|^{2}+ \frac{1}{2\Delta t}\left\|\delta e_{b}^{k}\right\|^{2}  + \frac{1}{2\Delta t}\left\|\tilde{e}_{u}^{k}- e_{u}^{k-1}\right\|^{2} +\frac{ M }{2}\left\|e_{w}^{k}\right\|^{2}+ \frac{\nu}{2} \left\|\nabla\tilde{e}_{u}^{k}\right\|^{2}+\frac{\lambda}{\Delta t}\left\|\delta e_{N}^{k} \right\|^{2} \Bigg)\\
&\leq C_{f}\Delta t^{2},
\end{aligned}
\end{equation}
where the parameters  $\hat{C_{e}}$ and $C_{f}$ are two positive general constants, for  $\Delta t\leq \hat{C_{e}}$.

\textbf{Step $\star\star$}.  Then, we evaluate the estimate of $\left\|\phi^{k}\right\|_{H^{2}}$.

Combining the above inequality~(\ref{ieqs-1}) with the following equation~(\ref{4-2b}):
\[
w^{k} = -\varepsilon \Delta \phi^{k} + M(\phi^{k-1})N^{k},
\]
we find that there exists a positive constant $C_{g}$ such that 
\begin{equation}\label{ieqs-2111}
\begin{aligned}
 \left\| \phi^{k} \right\|_{H^{2}} &\leq \frac{1}{\varepsilon}\left\| w^{k} \right\| + \frac{1}{\varepsilon} \left\| M(\phi^{k-1})N^{k} \right\| \\
&\leq \frac{1}{\varepsilon}\left\| e_{w}^{k} \right\| + \frac{1}{\varepsilon}\left\| w(t_{k}) \right\| + \frac{1}{\varepsilon}\left\| M(\phi^{k-1}) \right\|_{L^{\infty}} \left( \left\| e_{N}^{k} \right\| + \left\| N(t_{k}) \right\| \right) \\
&\leq C_{g}.
\end{aligned}
\end{equation}

Thus, we can obtain
\begin{equation}\label{ieqs-431}
\begin{aligned}
\left\|e_{\phi}^{k}\right\|_{H^{2}}\leq \left\|\phi^{k}\right\|_{H^{2}} +\left\|\phi(t_{k})\right\|_{H^{2}}\leq C_{h}.
\end{aligned}
\end{equation}

From inequalities (\ref{ieqs-1}) and (\ref{ieqs-431}), we have 
\begin{equation*}
\begin{aligned}
\left\|\phi^{k}\right\|_{L^{\infty}}&\leq \left\|e_{\phi}^{k}\right\|_{L^{\infty}}+\left\|\phi (t_{k})\right\|_{L^{\infty}} \\
&\leq C_{i} \left\|e_{\phi}^{k}\right\|_{H^{2}}^{3/4} \left\|e_{\phi}^{k}\right\|^{1/4} + \left\|\phi (t_{k})\right\|_{L^{\infty}}\\
&\leq C_{i}  C_{h}^{3/4}\left( \frac{2}{\varepsilon} C_{f}\right)^{1/8}\Delta t^{1/4}+\left\|\phi (t_{k})\right\|_{L^{\infty}}\\
&\leq \Pi_{\phi_{3}},
\end{aligned}
\end{equation*}
where  $C_{i}  C_{h}^{3/4} \left( \frac{2}{\varepsilon} C_{f}\right)^{1/8}\Delta t^{1/4}\leq 1 $, namely, $\Delta t\leq C_{e}=\frac{1}{C_{i} ^{4}  C_{h}^{3 } \left( \frac{2}{\varepsilon} C_{f}\right)^{1/2}}$. Thus, it is established that $\left\|\phi^{k}\right\|_{L^{\infty}}\leq  \Pi_{\phi_{3}}$.

From the above mentioned process, we derive the inequality (\ref{ieq418}) that $\Delta t\leq C_{e}$, for
\begin{equation}
\begin{aligned}
&\Pi_{\phi}=\max\left\{\Pi_{\phi_{1}}, \Pi_{\phi_{2}}, \Pi_{\phi_{3}}\right\},\quad C_{e} =\min  \left\{\hat{C_{e}}, \frac{1}{C_{17} ^{4}  C_{16}^{3 } C_{f}^{1/2}},  \frac{1}{C_{i} ^{4}  C_{h}^{3 } \left( \frac{2}{\varepsilon} C_{f}\right)^{1/2} }\right\}.
\end{aligned}
\end{equation}
\end{proof}
 
\begin{The}
Suppose the solution of the \textbf{Scheme II} satisfies the \textbf{Assumption \ref{ass4}}. The  numerical scheme is unconditionally convergent and has the following error estimates: for  $0\leq m\leq \frac{T}{\Delta t}$,
\begin{equation}\label{e2}
\begin{aligned}
&\frac{\lambda\varepsilon}{2} \left\|\nabla e_{\phi}^{m}\right\|^{2}+\frac{1}{2}\left\|e_{b}^{m}\right\|^{2}+\frac{1}{2}\left\| e_{u}^{m}\right\|^{2}+\frac{\Delta t^{2}}{2} \left\|\nabla e_{p}^{m}\right\|^{2}+  \frac{\lambda M\Delta t}{2}\left\|\nabla e_{w}^{m}\right\|^{2}+\lambda \left\|e_{N}^{m}\right\|^{2}\\
& + \Delta t\sum_{k=0}^{m}\left( \frac{\lambda\varepsilon}{2\Delta t} \left\|\delta(\nabla e_{\phi}^{k})\right\|^{2}+ \frac{1}{2\sigma\mu}\left\|\nabla\times e_{b}^{k}\right\|^{2}+ \frac{1}{2\Delta t}\left\|\delta e_{b}^{k}\right\|^{2}  + \frac{1}{2\Delta t}\left\|\tilde{e}_{u}^{k}- e_{u}^{k-1}\right\|^{2}  + \frac{\nu}{2} \left\|\nabla\tilde{e}_{u}^{k}\right\|^{2}+\frac{\lambda}{\Delta t}\left\|\delta e_{N}^{k} \right\|^{2} \right)\\
&\leq C \Delta t^{2}.
\end{aligned}
\end{equation}

\end{The}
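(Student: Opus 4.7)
The strategy is to mirror the dichotomy argument used in the proof of Theorem~\ref{the3-1}, splitting the analysis into a small-step regime controlled by the $L^{\infty}$ bounds of Lemma~\ref{lemma4-5} and its magnetic-field counterpart \eqref{ieq-lemma}, and a large-step regime in which unconditional stability already dominates the target rate.

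In the first regime, $\Delta t \leq C_{e}$, the plan is simply to invoke Lemma~\ref{lemma4-5} together with the bound $\|\mathbf{b}^{k}\|_{L^{\infty}} \leq \Pi_{b}$ from \eqref{ieq-lemma}. Once these pointwise bounds are in hand, the intermediate inequality~\eqref{ieqs-1}, which was derived inside the inductive step of Lemma~\ref{lemma4-5}, coincides with the desired estimate~\eqref{e2} up to relabelling of generic constants. In other words, the hard analytical work (handling the nonlinear coupled terms via the splitting into $\text{term } A$--$\text{term } J$, applying Lemmas~\ref{lemma4-2}--\ref{lemma4-3} to the IEQ-specific pieces, and then invoking the discrete Gronwall inequality of Lemma~\ref{L4-1}) is already encapsulated in that inductive argument, so nothing further needs to be re-proved.

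The complementary regime $\Delta t \geq C_{e}$ is where we cannot appeal to the $L^{\infty}$ bounds, but here the conclusion is obtained for free. Using the stability bound of Remark~\ref{Remark4-1} to uniformly control the discrete quantities $\phi^{k}$, $\mathbf{b}^{k}$, $\mathbf{v}^{k}$, $N^{k}$, $\nabla p^{k}$ and the dissipation sums, and using Assumption~\ref{ass4} to control the corresponding continuous quantities at the nodes $t_{k}$, the left-hand side of~\eqref{e2} is majorised by a generic constant $\widetilde{C}$ independent of $\Delta t$. Since $\Delta t^{2} \geq C_{e}^{2}$ in this regime, we rewrite $\widetilde{C} = (\widetilde{C}/C_{e}^{2}) C_{e}^{2} \leq (\widetilde{C}/C_{e}^{2}) \Delta t^{2}$, and absorbing the multiplicative constant into the generic $C$ produces~\eqref{e2}.

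The main obstacle I anticipate is not the two-case mechanism itself but making sure that the intermediate estimate~\eqref{ieqs-1} borrowed from the proof of Lemma~\ref{lemma4-5} is genuinely unconditional once $\|\phi^{k}\|_{L^{\infty}}$ and $\|\mathbf{b}^{k}\|_{L^{\infty}}$ are controlled. In particular, I would want to double-check that none of the Young-type splittings used to handle $\text{term } H$, $\text{term } I$, $\text{term } J$ hides a $\Delta t$-dependent constant, and that the application of Lemma~\ref{L4-1} remains valid at $m = T/\Delta t$ without an extra CFL-type constraint. Once this is confirmed, gluing the two cases together yields the unconditional convergence~\eqref{e2}.
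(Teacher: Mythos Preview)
Your proposal is correct and follows essentially the same two-case argument as the paper: for $\Delta t \leq C_{e}$ you invoke the $L^{\infty}$ bounds from Lemma~\ref{lemma4-5} and \eqref{ieq-lemma} so that the intermediate estimate~\eqref{ieqs-1} applies up to $m=T/\Delta t$, and for $\Delta t \geq C_{e}$ you bound the left-hand side by a fixed constant via Remark~\ref{Remark4-1} and Assumption~\ref{ass4} and then absorb $C_{e}^{-2}$ into the generic constant. This is exactly the paper's proof.
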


\begin{proof}
\textbf{Case 1}.
Since $\left\|\textbf{b}^{k}\right\|_{L^{\infty}}\leq\Pi_{b}  $ and $\left\|\phi^{k}\right\|_{L^{\infty}}\leq\Pi_{\phi} $ hold for any $0\leq k\leq \frac{T}{\Delta t}$, by the proof of \textbf{ Step $\star$} of \textbf{Lemma \ref{lemma4-5}}, we obtain that equation \eqref{ieqs-1} is valid for $0\leq m\leq \frac{T}{\Delta t}$, provided that $\Delta t\leq C_{e}$.

\textbf{Case 2}. If  $ \Delta t  \geq C_{e}$, using  \textbf{Remark 4.1} and \textbf{Assumption \ref{ass4}}, we get
\begin{equation}
\begin{aligned}
&\frac{\lambda\varepsilon}{2} \left\|\nabla e_{\phi}^{m}\right\|^{2}+\frac{1}{2}\left\|e_{b}^{m}\right\|^{2}+\frac{1}{2}\left\| e_{u}^{m}\right\|^{2}+\frac{\Delta t^{2}}{2} \left\|\nabla e_{p}^{m}\right\|^{2}+  \frac{\lambda M\Delta t}{2}\left\|\nabla e_{w}^{m}\right\|^{2}+\lambda \left\|e_{N}^{m}\right\|^{2}\\
& + \Delta t\sum_{k=0}^{m}\left( \frac{\lambda\varepsilon}{2\Delta t} \left\|\delta(\nabla e_{\phi}^{k})\right\|^{2}+ \frac{1}{2\sigma\mu}\left\|\nabla\times e_{b}^{k}\right\|^{2}+ \frac{1}{2\Delta t}\left\|\delta e_{b}^{k}\right\|^{2}  + \frac{1}{2\Delta t}\left\|\tilde{e}_{u}^{k}- e_{u}^{k-1}\right\|^{2} + \frac{\nu}{2} \left\|\nabla\tilde{e}_{u}^{k}\right\|^{2}+\frac{\lambda}{\Delta t}\left\|\delta e_{N}^{k} \right\|^{2} \right)\\
&\leq C_{ieq}\leq \frac{C_{ieq}}{C_{e}^{2}} C_{e}^{2} \leq\frac{C_{ieq}}{C_{e}^{2}}  \Delta t^{2}\leq C\Delta t^{2}.
\end{aligned}
\end{equation}

Thus, the proof is completed by considering the above two cases.
\end{proof}

{\Rem (\textbf{The Scheme III})
Obviously,  $N^{k}$ must be updated at each step of the \textbf{Scheme II}, as the energy conservation property of the algorithm may not correspond to the original energy's conservation. The improved IEQ (IIEQ) algorithm can avoid this problem, as demonstrated in \cite{CHEN2022114405}. The details are given by 

\textbf{Step IIEQ-1}. Compute $\phi^{k}$ and $w^{k}$ from 
\begin{subequations}\label{4-2iieq}
\begin{align}
&\frac{\phi^{k}- \phi^{k-1}}{\Delta t}+\nabla\cdot \left(\phi^{k-1} \textbf{v}^{k-1}\right)-\Delta t\lambda\nabla\cdot \left(\left(\phi^{k-1}\right)^{2}\nabla w^{k}\right)=M\Delta w^{k},\\
&w^{k}=-\varepsilon \Delta\phi^{k}+ \frac{1}{\varepsilon } \phi^{k-1} \tilde{N}^{k}+ S^{\prime}  \left(\phi^{k}-\phi^{k-1}\right),\\
&\tilde{N}^{k}-N \left(\phi^{k-1}\right)  =2 \phi^{k-1} \left( \phi^{k}-\phi^{k-1}\right), \label{4-28c}\\
&N \left(\phi^{k-1}\right)=\left(\phi^{k-1}\right)^{2}-1,\label{4-28d}
\end{align}
\end{subequations}
along with the boundary conditions $\frac{\partial\phi^{k}}{\partial \textbf{n}}|_{\partial\Omega}$=0 and  $\frac{\partial w^{k}}{\partial \textbf{n}}|_{\partial\Omega}$=0.  The term $S^{\prime}\left(\phi^{k}-\phi^{k-1}\right)$  is a first-order stabilized term, where  $S^{\prime}$ represents a positive stabilization parameter. It is noteworthy that we can calculate  $\tilde{N}^{k}$ using the equations (\ref{4-28c}) and (\ref{4-28d}), thereby allowing us to avoid iterating $N^{k}$ within the \textbf{Scheme II}.
}

\begin{The} Combining the condition $S^{\prime}\geq  \frac{ 1}{2\varepsilon}  \max\left\{\left|\phi^{k-1}\right|^{2}-2 \left|\phi^{k-1}\right|^{2}-1   \right\}  $, and setting the source term  $\textbf{f}$=$\textbf{0}$, the \textbf{Scheme III} (\textbf{Step IIEQ-1}, \textbf{Step 2}-\textbf{Step 4}) is unconditionally energy stable in the sense that
\begin{equation*}
\begin{aligned}
&E^{k}_{iieq}- E^{k-1}_{iieq} \leq 0, \\
&E^{k}_{iieq}=\frac{\lambda\varepsilon}{2}\left\|\nabla\phi^{k}\right\|^{2}+\frac{1}{2\mu}\left\|\textbf{b}^{k}\right\|^{2}+ \frac{1}{2}\left\|\textbf{v}^{k}\right\|^{2}+\frac{ \lambda}{4\varepsilon}\left \|N\left(\phi ^{k}\right)\right\|^{2} +\frac{\Delta t^{2}}{2}\left\|\nabla p^{k}\right\|^{2}.
\end{aligned}
\end{equation*}

\end{The}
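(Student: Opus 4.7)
The proof will mirror the structure of Theorem~\ref{Theorem3-1} and Theorem~\ref{theorem4-1}, since \textbf{Step 2}--\textbf{Step 4} of \textbf{Scheme III} are identical to those of \textbf{Scheme I}. First, I would reuse verbatim the identities (\ref{3-8})--(\ref{3-11}) obtained by pairing the magnetic, intermediate-velocity, viscous-velocity and projection equations with their natural test functions. The combined outcome, together with the phase-velocity cross-term estimate (\ref{3-12}), already yields the dissipative contributions $\frac{1}{2\mu}\delta\|\mathbf{b}^k\|^2+\frac{1}{2}\delta\|\mathbf{v}^k\|^2+\frac{\Delta t^2}{2}\delta\|\nabla p^k\|^2$ together with several nonnegative dissipation terms, leaving the phase-field block responsible for producing the remaining telescoping quantities $\frac{\lambda\varepsilon}{2}\delta\|\nabla\phi^k\|^2$ and $\frac{\lambda}{4\varepsilon}\delta\|N(\phi^k)\|^2$.

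Next, I would take the $L^2$ inner product of the first equation in (\ref{4-2iieq}) with $\lambda\Delta t\, w^k$ and of the second equation with $\lambda\,\delta\phi^k$. Cancelling the common $\lambda(\delta\phi^k,w^k)$ term and using $2(\delta(\nabla\phi^k),\nabla\phi^k)=\delta\|\nabla\phi^k\|^2+\|\delta(\nabla\phi^k)\|^2$ reduces the task to analysing the scalar quantity
\[
\mathcal{N}^k:=\tfrac{\lambda}{\varepsilon}\bigl(\phi^{k-1}\tilde N^k,\delta\phi^k\bigr)+\lambda S'\|\delta\phi^k\|^2.
\]
The key algebraic step is the rewriting of (\ref{4-28c}) as $2\phi^{k-1}\delta\phi^k=\tilde N^k-N(\phi^{k-1})$, from which the identity $a(a-b)=\tfrac{1}{2}(a^2-b^2+(a-b)^2)$ gives
\[
(\phi^{k-1}\tilde N^k,\delta\phi^k)=\tfrac{1}{4}\Bigl(\|\tilde N^k\|^2-\|N(\phi^{k-1})\|^2+\|\tilde N^k-N(\phi^{k-1})\|^2\Bigr).
\]
Because $N(\phi)=\phi^2-1$, a direct calculation produces the Taylor-type identity $\tilde N^k=N(\phi^k)-(\delta\phi^k)^2$, so that $\|\tilde N^k\|^2-\|N(\phi^{k-1})\|^2$ converts into the telescoping quantity $\delta\|N(\phi^k)\|^2$ up to a remainder $-2(N(\phi^k),(\delta\phi^k)^2)+\|(\delta\phi^k)^2\|^2$.

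The final step, and the main obstacle, is to absorb the only sign-indefinite remainder $-\tfrac{\lambda}{2\varepsilon}(N(\phi^k),(\delta\phi^k)^2)$ into the stabilization contribution $\lambda S'\|\delta\phi^k\|^2$; the companion term $\tfrac{\lambda}{4\varepsilon}\|(\delta\phi^k)^2\|^2$ is nonnegative and may simply be discarded. This absorption requires the pointwise bound $|N(\phi^k)|\,(\delta\phi^k)^2 \le 2\varepsilon S' (\delta\phi^k)^2$, which is precisely the role of the stated lower bound on $S'$ (controlling the variation of $N$ over one step in terms of the stabilization strength). Assembling all the contributions and dropping the remaining nonnegative dissipative terms (including $\|\delta(\nabla\phi^k)\|^2$, $\|\delta\mathbf{b}^k\|^2$, $\|\nabla\times\mathbf{b}^k\|^2$, $\|\nabla\tilde{\mathbf{v}}^k\|^2$ and the projection-step cross terms) finally yields $E^k_{iieq}-E^{k-1}_{iieq}\le 0$, which is the claimed unconditional energy stability.
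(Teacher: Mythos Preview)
Your proposal is correct and follows exactly the route the paper intends: the paper itself omits the proof, stating only that it ``closely resembles the one in \cite{CHEN2022114405}, as well as \textbf{Theorem~\ref{theorem4-1}},'' and your argument---reusing identities (\ref{3-8})--(\ref{3-12}) for \textbf{Steps 2--4} and handling the phase-field block via the identity $\tilde N^{k}=N(\phi^{k})-(\delta\phi^{k})^{2}$ to produce the telescoping term $\tfrac{\lambda}{4\varepsilon}\delta\|N(\phi^{k})\|^{2}$---is precisely that strategy. One small remark: your absorption step naturally produces a lower bound on $S'$ in terms of $N(\phi^{k})$, whereas the hypothesis in the statement is phrased in terms of $\phi^{k-1}$ (and, as written, appears to contain a typographical slip); this discrepancy is not a flaw in your argument but rather reflects the formulation in the referenced IIEQ paper.
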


\begin{proof}
The proof closely resembles the one in \cite{CHEN2022114405}, as well as \textbf{Theorem \ref{theorem4-1}}, thus we omit the details.
\end{proof}

\section{Numerical examples}

In this section, we present several numerical examples to validate our theoretical results and
illustrate the performance of the proposed scheme. Here, we employ the inf-sup stable pair $P_{1}-P_{1}$ to discretize the phase-field variable and chemical potential, the inf-sup stable MINI element pair $(P_{1}^{b}, P_{1})$ for  the velocity field and pressure field, and the $P_{1}$ finite element for the magnetic field. The optimal convergence results are expected as reported in \cite{2022Highly}:
\begin{equation}\label{6GU-a}
\begin{aligned}
&\|\phi(t_{n})-\phi_{h}^{n}\|\lesssim \Delta t+h^{2}\lesssim h^{2}, \, \|\nabla\phi(t_{n})-\nabla\phi_{h}^{n}\|\lesssim \Delta t+h\lesssim h,\\
&\|\textbf{v}(t_{n})-\textbf{v}_{h}^{n}\|\lesssim \Delta t+h^{2}\lesssim h^{2}, \, \|\nabla\textbf{v}(t_{n})-\nabla\textbf{v}_{h}^{n}\|+\|p(t_{n})-p_{h}^{n}\|\lesssim \Delta t+h\lesssim h,\\
&\|\textbf{b}(t_{n})-\textbf{b}_{h}^{n}\|\lesssim \Delta t+h^{2}\lesssim h^{2},\, \|\nabla\textbf{b}(t_{n})-\nabla\textbf{b}_{h}^{n}\| \lesssim \Delta t+h\lesssim h.
\end{aligned}
\end{equation}

\subsection{A smooth solution}
This example is  intended to validate the convergence orders of three schemes: \textbf{Scheme I}, \textbf{Scheme II}, and \textbf{Scheme III}. The considered domain is $\Omega$=$[0, 1]^{2}$, with all parameters set as follows: $\nu$=$\mu$=$\lambda$=$\sigma$=$M$=$\varepsilon$=$S$=1, and $S^{\prime}$=0.
We choose the right-hand sides, initial conditions, and boundary conditions such that the two-phase MHD system admits the following exact solution 
\begin{eqnarray*}
\left\{
\begin{array}{ll}
\phi= \rm sin^2(\pi x) sin^2(\pi y)sin(t),\\
\textbf{v}=\left(\rm x^2(x - 1)^2y(y - 1)(2y -1),\ -\rm y^2(y - 1)^2x(x - 1)(2x - 1) \right)\rm cos(t),\\
p= \rm (2x-1)(2y -1)cos(t) ,\\
\textbf{b}=\left(\rm sin(\pi x)cos(\pi y),  -\rm sin(\pi y) cos(\pi x)\right)\rm cos(t). \\
\end{array}
\right.
\end{eqnarray*}

For simplicity, we verify both the time and space convergence orders at the end time $T$=1 with  $\Delta t$ =$O(h^{2})$.  The convergence results of the \textbf{Scheme I} are shown in \textbf{Table 1}, while the numerical results of the \textbf{Scheme II} and the \textbf{Scheme III} are presented in \textbf{Tables 2-3}. From  \textbf{Tables 1-3}, we  observe that  $\|\phi(t_{n})-\phi_{h}^{n}\|\lesssim  h^{2}$, $\|\nabla\phi(t_{n})-\nabla\phi_{h}^{n}\| \lesssim h$, $\|\textbf{v}(t_{n})-\textbf{v}_{h}^{n}\| \lesssim h^{2}$, $\|\textbf{b}(t_{n})-\textbf{b}_{h}^{n}\| \lesssim h^{2}$,  $\|\nabla\textbf{b}(t_{n})-\nabla\textbf{b}_{h}^{n}\| \lesssim h$,  and  $\|\nabla\textbf{v}(t_{n})-\nabla\textbf{v}_{h}^{n}\|\lesssim h^{2}$, and $\|p(t_{n})-p_{h}^{n}\|\lesssim h^{2}$.

\begin{table}[hpt]\label{1-table1}
\tabcolsep 1.2mm {\footnotesize\textbf{Table 1:} Convergence results for  Scheme I.}
\begin{center}
\begin{tabular}{ccccccccccccccccccccccccccccccc}
\hline
h & $\|\phi(t_{n})-\phi_{h}^{n}\|$ & rate & $\|\nabla (\phi(t_{n})-\phi_{h}^{n})\|$ & rate & $\|\textbf{v}(t_{n})-\textbf{v}_{h}^{n}\|$ & rate & $\|\nabla (\textbf{v}(t_{n})-\textbf{v}_{h}^{n})\|$ & rate &\\ \hline
 1/8 & 0.0972 &1.41 & 0.6314  & 1.18 &0.0095 & 2.80 &0.2266 & 2.35& \\
 1/16 &0.0274 & 1.83 & 0.2649  & 1.25  &0.0030 & 1.65 & 0.0484 & 2.23& \\
 1/32 &0.0071 & 1.96 & 0.1224 & 1.11  & 0.0008  & 1.84  & 0.0128 & 1.92 & \\
 1/64 & 0.0018  &1.99 & 0.0597  & 1.03  & 0.0002  & 1.96 &0.0033 & 1.96&\\
\hline
h &$\|\textbf{b}(t_{n})-\textbf{b}_{h}^{n}\|$ & rate & $\|\nabla (\textbf{b}(t_{n})-\textbf{b}_{h}^{n})\|$ & rate & $\|p(t_{n})-p_{h}^{n}\|$ & rate & \\ \hline
  1/8 & 0.0168 &  1.80  &  0.3315 & 0.97   & 0.9722& 0.53 & \\
  1/16 & 0.0045  & 1.91  &  0.1664 &  0.99  & 0.2995& 1.70& \\
  1/32 & 0.0011   & 2.02 & 0.0833   & 1.00  & 0.0784 & 1.93 & \\
  1/64 & 0.0003  & 2.00  &  0.0417 &  1.00 & 0.0198 & 1.98 &\\
\hline
\end{tabular}
\end{center}
\end{table}

\begin{table}[hpt]\label{2-table1}
\tabcolsep 1.2mm {\footnotesize\textbf{Table 2:} Convergence results for Scheme II.}
\begin{center}
\begin{tabular}{ccccccccccccccccccccccccccccccc}
\hline
h & $\|\phi(t_{n})-\phi_{h}^{n}\|$ & rate & $\|\nabla (\phi(t_{n})-\phi_{h}^{n})\|$ & rate & $\|\textbf{v}(t_{n})-\textbf{v}_{h}^{n}\|$ & rate & $\|\nabla (\textbf{v}(t_{n})-\textbf{v}_{h}^{n})\|$ & rate &\\ \hline
 1/8 & 0.3083  & 1.41  &  0.3830  &  1.18 & 4.5393 & 2.80  & 14.6861 &  2.35 & \\
 1/16 & 0.0870  & 1.82  &  0.1608   & 1.25    &1.4461  & 1.65   & 3.1365   & 2.23  & \\
 1/32 & 0.0227 & 1.94  &  0.0743 & 1.11  &  0.4032   &  1.84  &  0.8268 &  1.92  & \\
 1/64 &  0.0060   & 1.93 &  0.0363   &  1.03  &  0.1040  & 1.95  &0.2138  & 1.95 &\\
\hline
h &$\|\textbf{b}(t_{n})-\textbf{b}_{h}^{n}\|$ & rate & $\|\nabla (\textbf{b}(t_{n})-\textbf{b}_{h}^{n})\|$ & rate & $\|p(t_{n})-p_{h}^{n}\|$ & rate & \\ \hline
  1/8 &  0.0439  &  1.80  &  0.1953 &  0.97    &5.4031 &  0.53  & \\
  1/16 &  0.0114   &  1.94  & 0.0980    &  0.99   & 1.6730  & 1.70 & \\
  1/32 &  0.0029  &  1.98  &  0.0491   &  1.00  & 0.4469  & 1.90   & \\
  1/64 &  0.0007  &  2.00 & 0.0245   &  1.00  &  0.1220 &1.87   &\\
\hline
\end{tabular}
\end{center}
\end{table}

\begin{table}[hpt]\label{3-table1}
\tabcolsep 1.2mm {\footnotesize\textbf{Table 3:} Convergence results for  Scheme III.}
\begin{center}
\begin{tabular}{ccccccccccccccccccccccccccccccc}
\hline
h & $\|\phi(t_{n})-\phi_{h}^{n}\|$ & rate & $\|\nabla (\phi(t_{n})-\phi_{h}^{n})\|$ & rate  & $\|\textbf{v}(t_{n})-\textbf{v}_{h}^{n}\|$ & rate & $\|\nabla (\textbf{v}(t_{n})-\textbf{v}_{h}^{n})\|$ & rate &\\ \hline
 1/8 &  0.3082   & 1.41   &  0.3828    & 1.18   &  4.5418 &  2.8009  &  14.6802  &  2.35  & \\
 1/16 & 0.0868  & 1.83  &    0.1606    &  1.25    &  1.4482 &  1.6490    & 3.1386    &  2.23  & \\
 1/32 & 0.0224   &  1.96   &  0.0742  &  1.11  &   0.4035   &   1.8434   &  0.8269  &  1.92   & \\
 1/64 &  0.0056     &  1.99    &   0.0362   &  1.03  &  0.1038   &  1.9596  & 0.2129  & 1.96  &\\
\hline
h &$\|\textbf{b}(t_{n})-\textbf{b}_{h}^{n}\|$ & rate & $\|\nabla (\textbf{b}(t_{n})-\textbf{b}_{h}^{n})\|$ & rate & $\|p(t_{n})-p_{h}^{n}\|$ & rate & \\ \hline
  1/8 &   0.0439  &  1.80   &  0.1953 &    0.97    &  5.39381 &   0.53   & \\
  1/16 &   0.0114  &  1.94     &  0.0980    &  0.99     &   1.66086  & 1.70  & \\
  1/32 &  0.0029   &   1.98    &  0.0491   &   1.00  &  0.434808  & 1.93    & \\
  1/64 &   0.0007    & 2.00   &  0.0245  &     1.00 &  0.109956  & 1.98   &\\
\hline
\end{tabular}
\end{center}
\end{table}

\subsection{Spinodal decomposition}
In this example, we simulate the benchmark problem of
spinodal decomposition for phase separation \cite{SU2023107126} using the \textbf{Scheme I} and \textbf{Scheme III}. We set the computation domain  $\Omega$=$[0, 1]^{2}$. The parameters are $\nu$=$\mu$=$\sigma$=$M$=1, $\varepsilon$=$\lambda$=0.01, and $S$=$S^{\prime}$=$1/\varepsilon$. The initial values are given as  
\begin{equation}\label{5-2-1}
\begin{aligned}
&\textbf{v}^{0}=\textbf{0}, \quad p^{0}=0,\quad  \textbf{b}^{0}=\textbf{0},\quad \phi^{0}=\bar{\phi}+0.001 \text{rand}(r),
\end{aligned}
\end{equation}
where   $\bar{\phi}$=-0.05, and $\rm rand (r)$ is  a uniformly distributed random function in $[-1, 1]$ with zero mean.  We impose periodic boundary conditions on the phase field.  Choosing a space size of $h=\frac{1 }{64}$ and time sizes $\Delta t$=1, 0.1, 0.01,  0.001, 0.0001,  we display the curves of random energy and the mass of the phase field in Figure 1. Figure 1 (a) and (b) show the energy computed by \textbf{Scheme I} and  \textbf{Scheme III}, respectively, while (c) and (d) present the mass of the phase field computed by the  two schemes.
From  Figure 1 (a) and (b), we observe that all energy curves are dissipating, which is in accordance with the energy law.  The mass of the phase field is also conserved at different time scales, as illustrated in Figure 1 (c) and (d).

\begin{figure}[htbp]
	\centering
\subfigure[  ]{
		\begin{minipage}[t]{0.25\linewidth}
			\centering
			\includegraphics[width=\textwidth]{semi-fai-005.eps}
		\end{minipage}
	}%
	\subfigure[  ]{
		\begin{minipage}[t]{0.25\linewidth}
			\centering
			\includegraphics[width=\textwidth]{IIEQ-fai-005.eps}
		\end{minipage}
	}%
	\subfigure[  ]{
		\begin{minipage}[t]{0.25\linewidth}
			\centering
			\includegraphics[width=\textwidth]{semi-massfai-005.eps}
		\end{minipage}
	}%
	\subfigure[  ]{
		\begin{minipage}[t]{0.25\linewidth}
			\centering
			\includegraphics[width=\textwidth]{IIEQ-massfai-005.eps}
		\end{minipage}
	}%
	\centering
	\caption{The algorithm energy of  Scheme I (a), Scheme III (b); The mass of phase field for Scheme I  (c) and Scheme III (d).}
	\label{fig3}
\end{figure}

Besides, we set $h=\frac{1}{ 128}$ and $\Delta t$=0.0001 to monitor the evolution of the phase field over time. The results are presented in Figures 2-3. We find that the snapshots of the numerical phase field  gradually coarsen, and the  evolutionary effects of the \textbf{Scheme I} and \textbf{Scheme III} are almost the same at the same time. 
 
\begin{figure}[htbp]
	\centering
	\subfigure[t=0.0001 ]{
		\begin{minipage}[t]{0.25\linewidth}
			\centering
			\includegraphics[width=\textwidth]{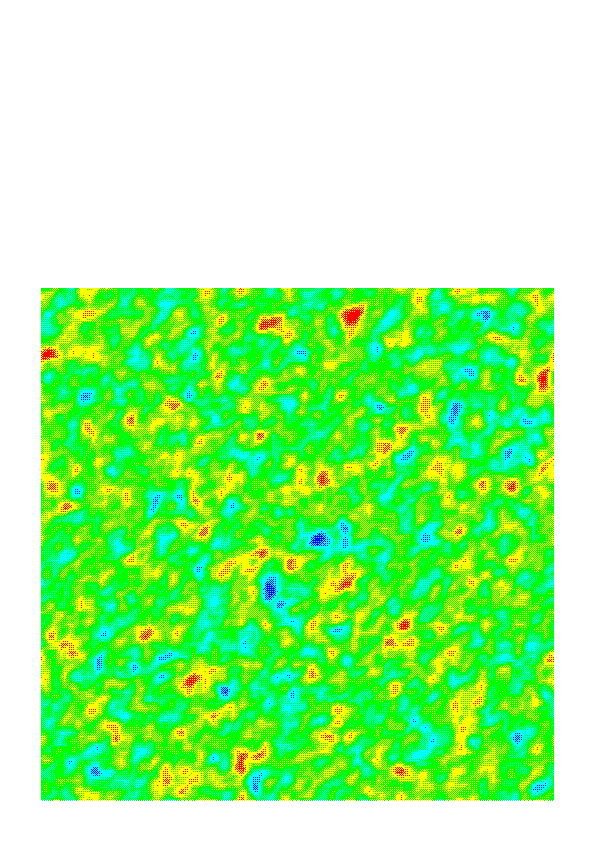}
		\end{minipage}%
	}%
	\subfigure[t=0.003 ]{
		\begin{minipage}[t]{0.25\linewidth}
			\centering
			\includegraphics[width=\textwidth]{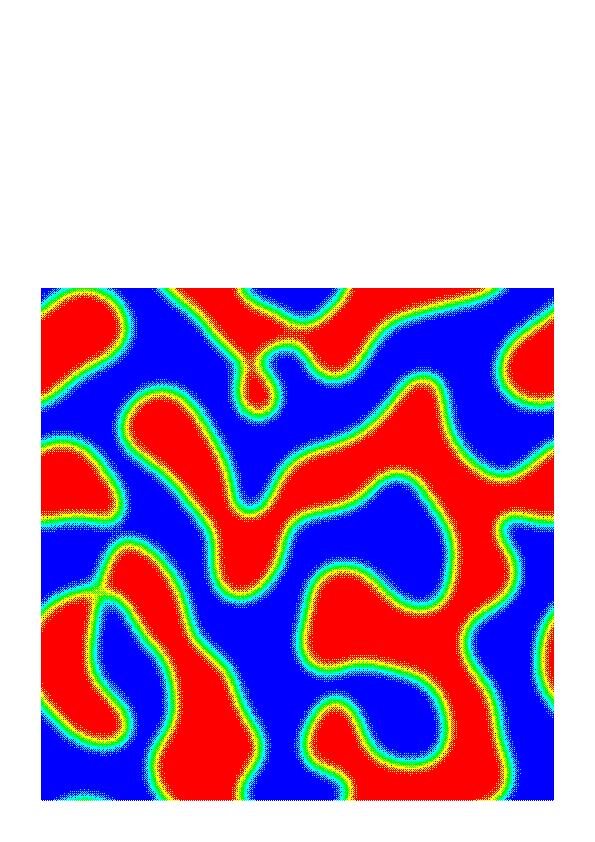}
		\end{minipage}
	}%
	\subfigure[t=0.05 ]{
		\begin{minipage}[t]{0.25\linewidth}
			\centering
			\includegraphics[width=\textwidth]{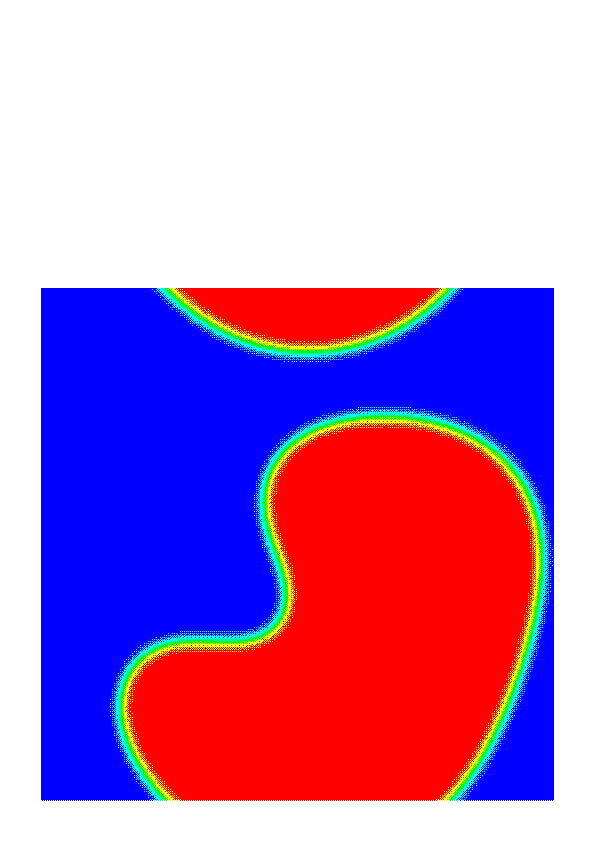}
		\end{minipage}
	}%
	\subfigure[t=0.5 ]{
		\begin{minipage}[t]{0.25\linewidth}
			\centering
			\includegraphics[width=\textwidth]{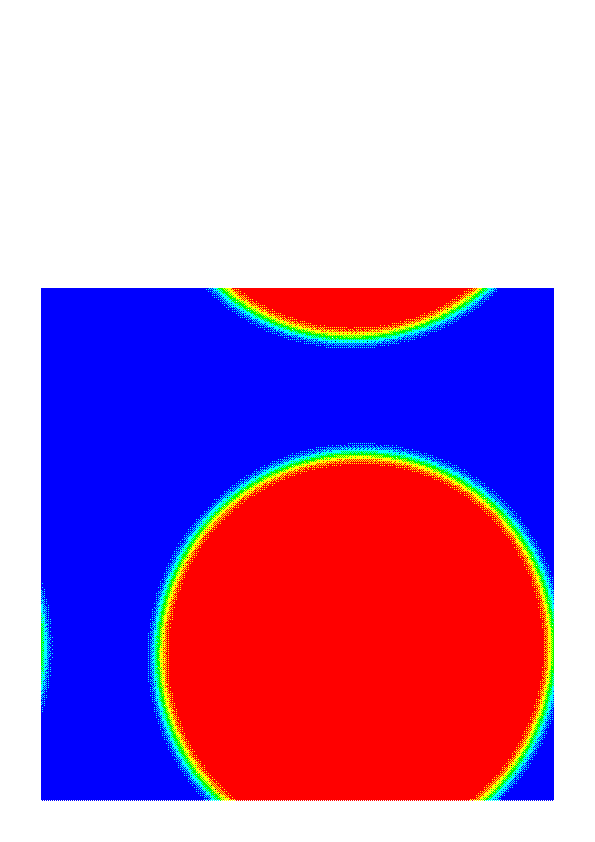}
		\end{minipage}
	}%
	\centering
	\caption{The dynamics of spinodal decomposition examples for  Scheme I with $\varepsilon$=0.01, $\lambda$=0.01$, \bar{\phi}$=-0.05, $\mu$=1.}
	\label{fig6}
\end{figure}

\begin{figure}[htbp]
	\centering
	\subfigure[t=0.0001 ]{
		\begin{minipage}[t]{0.25\linewidth}
			\centering
			\includegraphics[width=\textwidth]{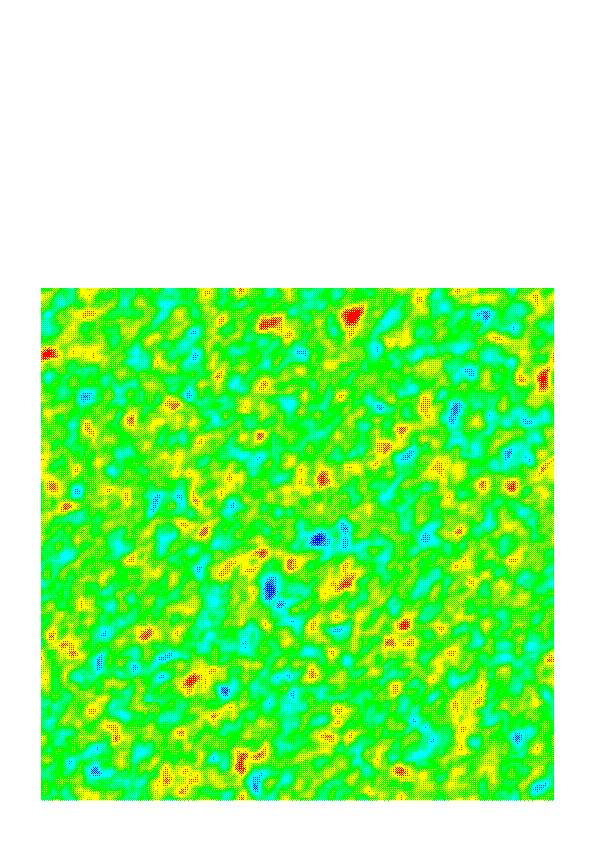}
		\end{minipage}%
	}%
	\subfigure[t=0.003 ]{
		\begin{minipage}[t]{0.25\linewidth}
			\centering
			\includegraphics[width=\textwidth]{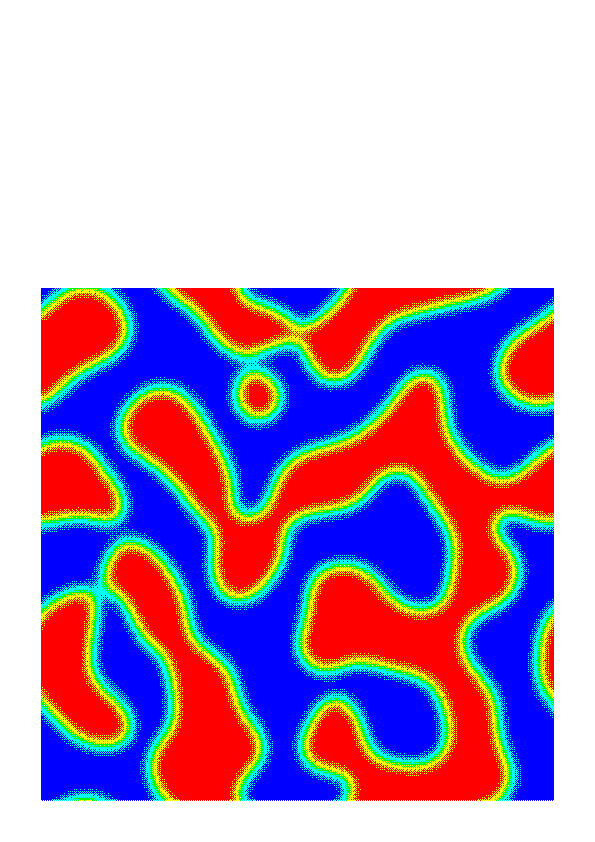}
		\end{minipage}
	}%
	\subfigure[t=0.05 ]{
		\begin{minipage}[t]{0.25\linewidth}
			\centering
			\includegraphics[width=\textwidth]{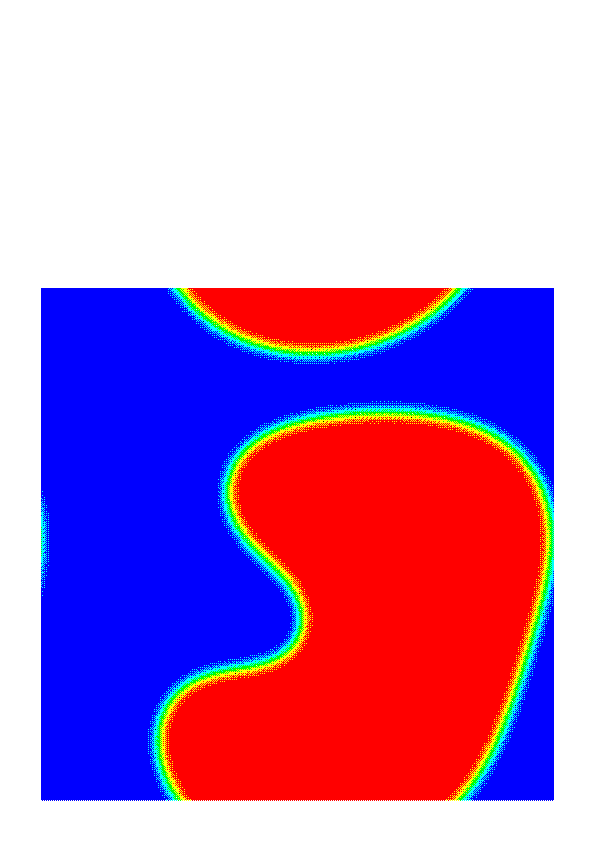}
		\end{minipage}
	}%
	\subfigure[t=0.5 ]{
		\begin{minipage}[t]{0.25\linewidth}
			\centering
			\includegraphics[width=\textwidth]{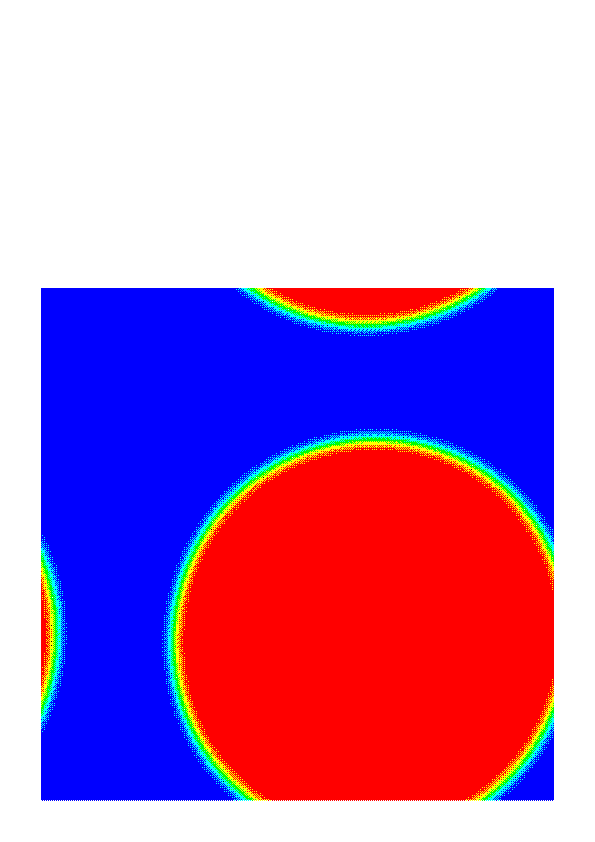}
		\end{minipage}
	}%
	\centering
	\caption{The dynamics of spinodal decomposition examples for Scheme III with   $\varepsilon$=0.01, $\lambda$=0.01$, \bar{\phi}$=-0.05, $\mu$=1.}
	\label{fig7}
\end{figure}

\subsection{Boussinesq approximation}

We investigate the Boussinesq approximation \cite{SU2023107126, 2007AnZHANG}, where the two fluids have a small density ratio.  Due to the similarity of the \textbf{Schemes I-III}, we only provide the evolutionary results of \textbf{Scheme I} in this part. We consider the domain  $\Omega$=$[0, 1]\times[0, 1.5]$,  the space scale size $h=\frac{1}{200}$, and the time scale size $\Delta t$=0.001. The momentum equations are reconstructed as  
\begin{equation*}
\begin{aligned}
&\rho_{0}(\textbf{v}_{t}+(\textbf{v}\cdot \nabla) \textbf{v})- \nu\Delta \textbf{v}+ \nabla p-\frac{1}{\mu}\nabla\times\textbf{b}\times\textbf{b}+ \lambda\phi\nabla w=-(1+\phi)\textbf{g}(\rho_{1}-\rho_{0})-(1-\phi)\textbf{g}(\rho_{2}-\rho_{0}),
\end{aligned}
\end{equation*}
where $\rho_{1}$=1 and $\rho_{2}$=9 represent the densities of two immiscible, incompressible fluids.   The ``background'' density is defined by $\rho_{0}$=$(\rho_{1}+\rho_{2})/2$, and the gravitational constant vector $\textbf{g}$=$[0, 10]^{T}$. The  parameters are given as
\begin{equation*}
\nu_{1}=\nu_{2}=1, \quad \sigma_{1}=300,\quad \sigma_{2}=400,\quad M=10^{-4},\quad \varepsilon=0.01, \quad \lambda=5,\quad S=1,
\end{equation*}
where $\nu$=$(\nu_{1}, \nu_{2})$ and $\sigma$=($\sigma_{1}, \sigma_{2}$). The boundary conditions are set  as
\begin{equation*}
\left\{
\begin{aligned}
&\frac{\partial\phi}{\partial \textbf{n}}|_{\partial\Omega}=0, \frac{\partial w}{\partial \textbf{n}}|_{\partial\Omega}=0,\\
& \textbf{v}|_{y=0,\, 1.5}=\textbf{0},\\
& v_{1} =0 \,  \text{otherwise},\\
&\textbf{n}\times \textbf{b}|_{\partial\Omega}=\textbf{n}\times (0, 1)^{T}|_{\partial\Omega}.
\end{aligned}
\right.
\end{equation*}

\begin{figure}[htbp]
	\centering
	\subfigure[t=0.01 ]{
		\begin{minipage}[t]{0.2\linewidth}
			\centering
			\includegraphics[width=\textwidth]{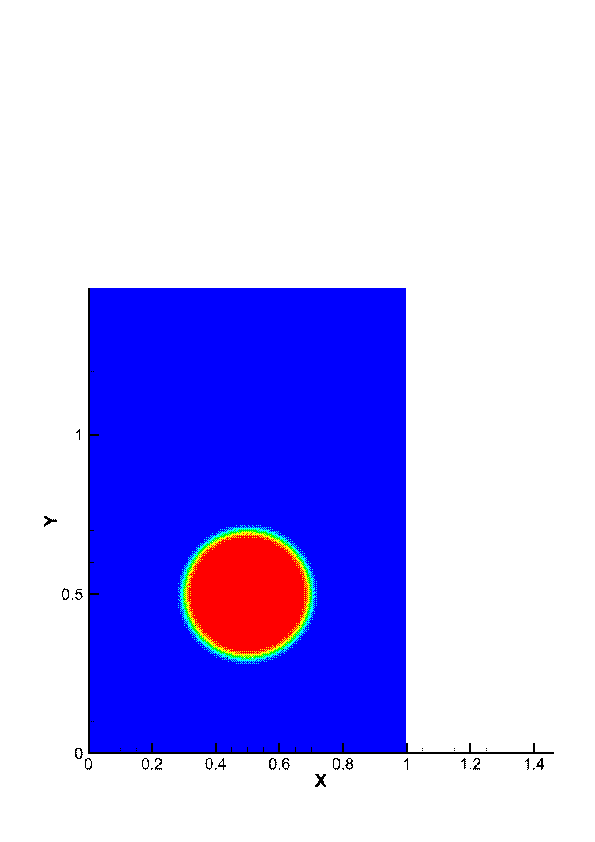}
		\end{minipage}%
	}%
	\subfigure[t=0.5 ]{
		\begin{minipage}[t]{0.2\linewidth}
			\centering
			\includegraphics[width=\textwidth]{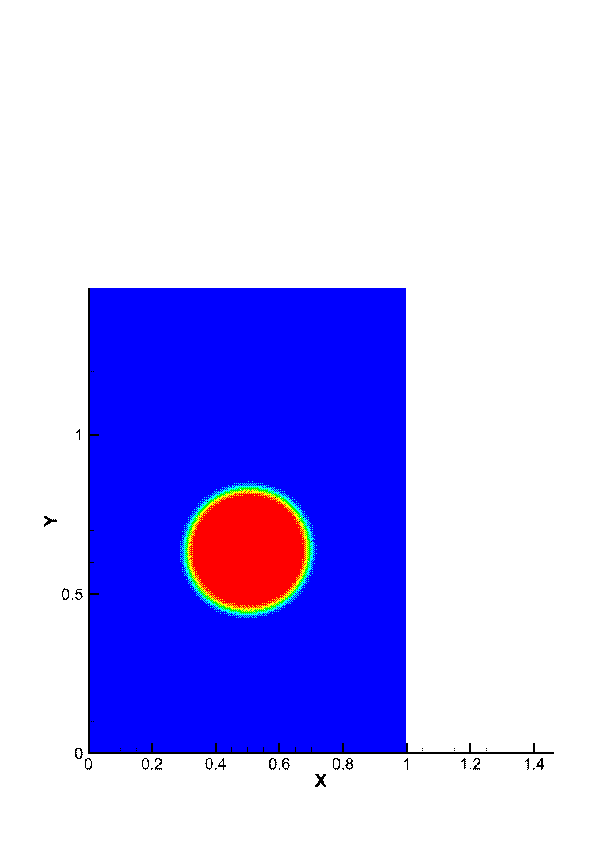}
		\end{minipage}
	}%
	\subfigure[t=1 ]{
		\begin{minipage}[t]{0.2\linewidth}
			\centering
			\includegraphics[width=\textwidth]{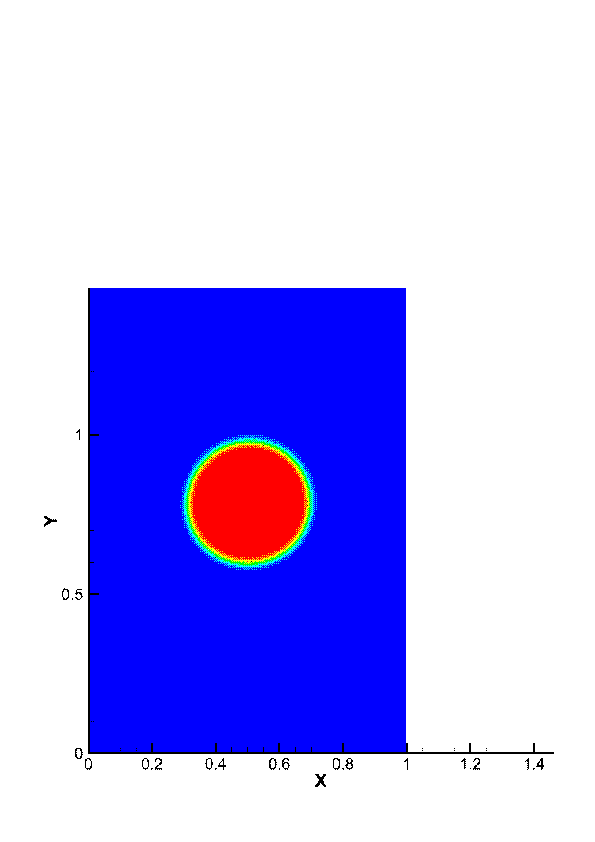}
		\end{minipage}
	}%
	\subfigure[t=2 ]{
		\begin{minipage}[t]{0.2\linewidth}
			\centering
			\includegraphics[width=\textwidth]{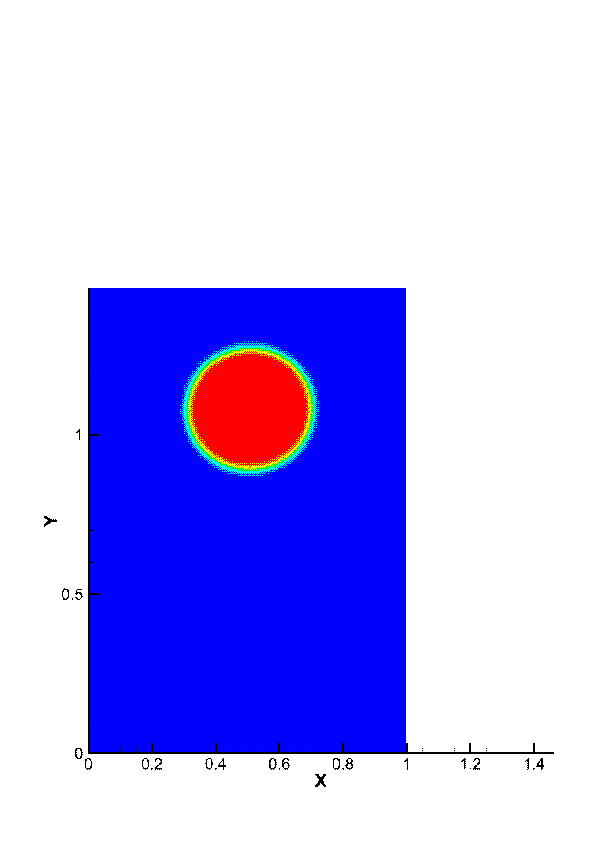}
		\end{minipage}
	}%
   \subfigure[t=3 ]{
		\begin{minipage}[t]{0.2\linewidth}
			\centering
			\includegraphics[width=\textwidth]{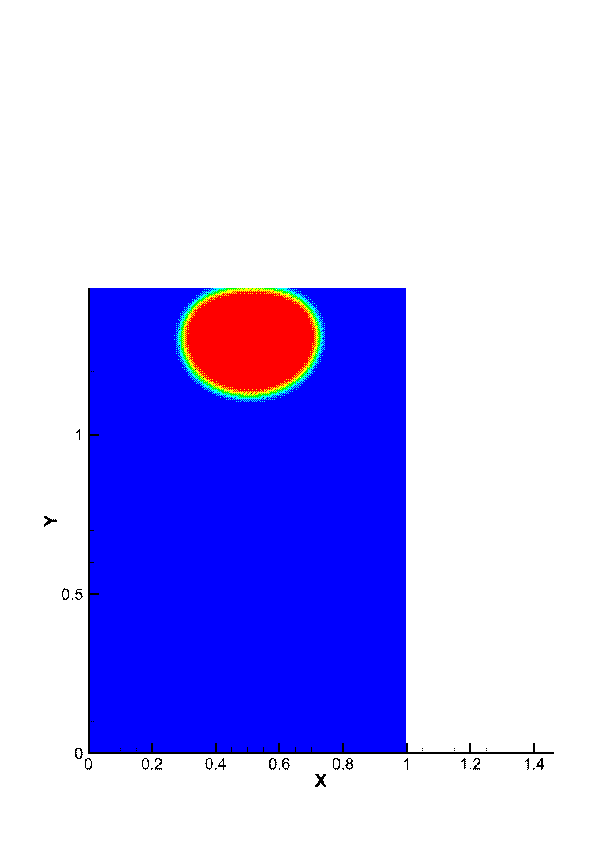}
		\end{minipage}
	}%
	\centering
	\caption{Snapshots of phase field without the Lorentz force term.}
	\label{fig13}
\end{figure}

\begin{figure}[htbp]
	\centering
	\subfigure[t=0.01 ]{
		\begin{minipage}[t]{0.2\linewidth}
			\centering
			\includegraphics[width=\textwidth]{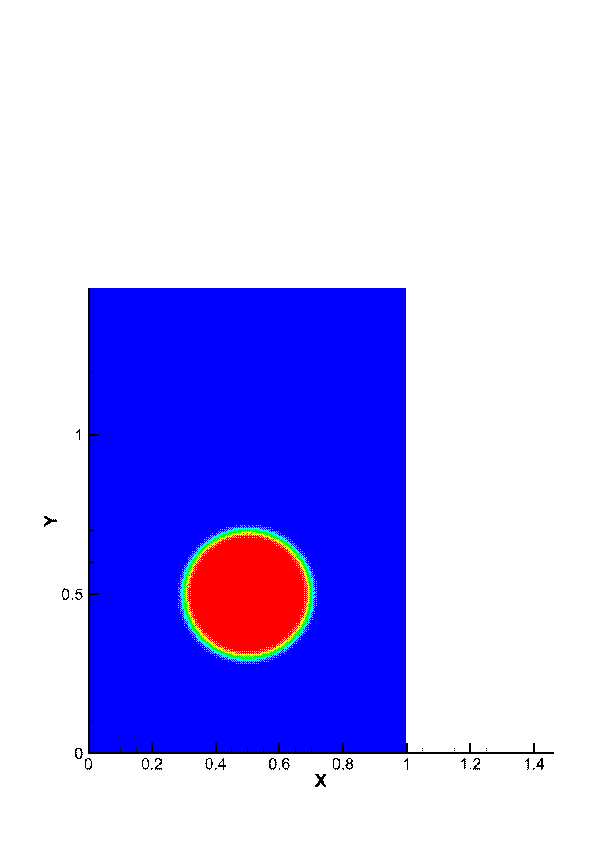}
		\end{minipage}%
	}%
	\subfigure[t=0.5 ]{
		\begin{minipage}[t]{0.2\linewidth}
			\centering
			\includegraphics[width=\textwidth]{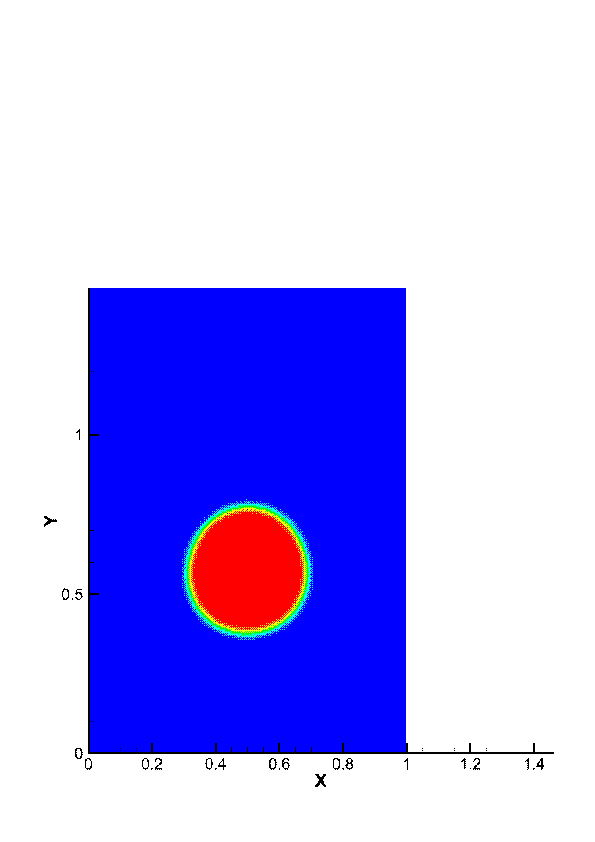}
		\end{minipage}
	}%
	\subfigure[t=1 ]{
		\begin{minipage}[t]{0.2\linewidth}
			\centering
			\includegraphics[width=\textwidth]{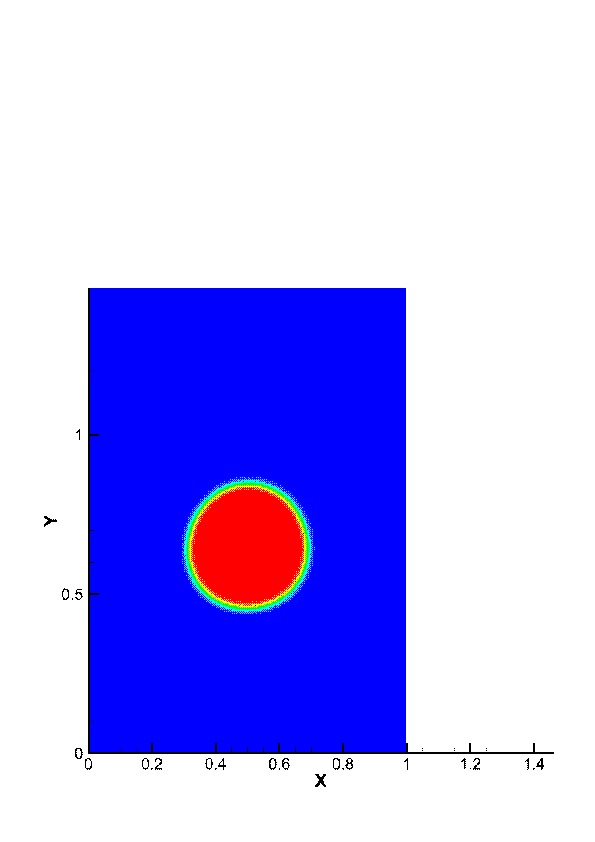}
		\end{minipage}
	}%
	\subfigure[t=2 ]{
		\begin{minipage}[t]{0.2\linewidth}
			\centering
			\includegraphics[width=\textwidth]{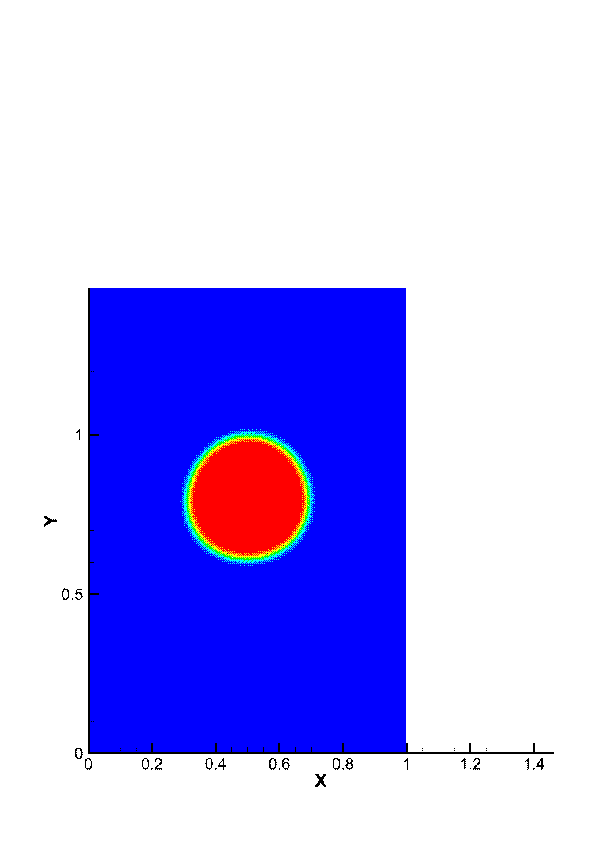}
		\end{minipage}
	}%
   \subfigure[t=3 ]{
		\begin{minipage}[t]{0.2\linewidth}
			\centering
			\includegraphics[width=\textwidth]{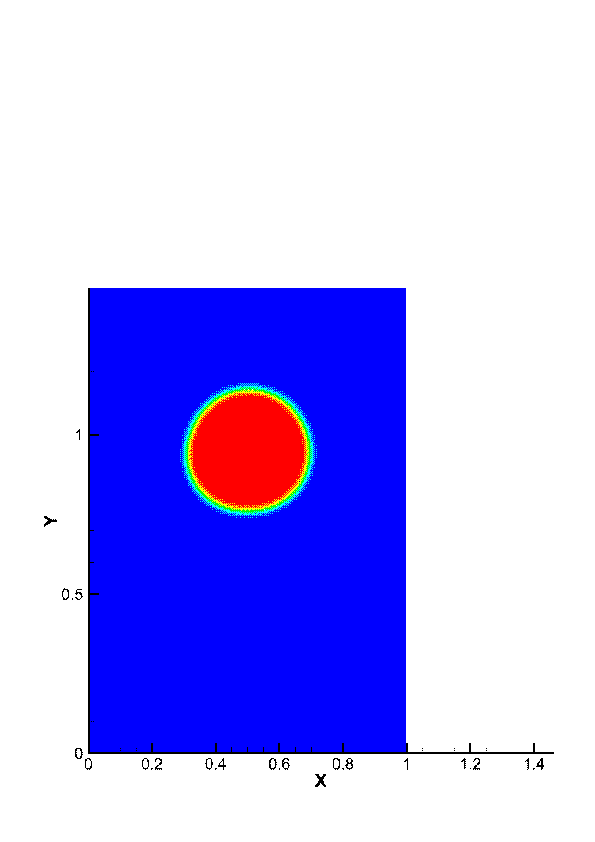}
		\end{minipage}
	}%
	\centering
	\caption{Snapshots of phase field with  $\mu$=0.001.}
	\label{fig13}
\end{figure}
Here, we explore  the influence of the Lorentz force on evolution. The numerical results without  Lorentz force and with Lorentz force (i.e. $\mu$=0.001) are shown in Figure 4 and Figure 5, respectively. As the magnetic permeability decreases (Lorentz force increases), we observe that the rising speed of bubbles slows down, indicating that the Lorentz force suppresses their buoyancy.

\section{Conclusions} 
In this paper, we propose two linear, fully-decoupled, and unconditionally energy-stable semi-discrete
schemes for two-phase MHD model. The two methods consist of the semi-implicit stabilization method and the invariant energy IEQ method, which are both applied to the phase field system. The pressure correction method is designed for the saddle point system, and appropriate implicit-explicit treatments are employed for the nonlinear coupled terms.  For both schemes, we strictly establish unconditional energy-stable and  error estimates
without any time step and mesh size constraint in 2D/3D cases. Specifically, to obtain the
error estimates of the fully-decoupled semi-discrete schemes, we derive the bound for $\left\|\phi^{k}\right\|_{L^{\infty}}$ and $\left\|\textbf{b}^{k}\right\|_{L^{\infty}}$. Based on the results of $\left\|\phi^{k}\right\|_{L^{\infty}}$ and $\left\|\textbf{b}^{k}\right\|_{L^{\infty}}$, we obtain the error estimates under the condition $\Delta t\leq C$, and the convergence results are derived through the stability results for $\Delta t\geq C$.  Furthermore, several numerical experiments are conducted to evaluate the stability and accuracy of the proposed schemes.

\section*{Statements and Declarations}
Competing Interests: the authors have no relevant financial or non-financial interests to disclose.

Data Availability: all data generated or analyzed during this study are included in this article.

This work is partly supported by  the NSF of China (No. 12126361) support.
\newpage
\bibliographystyle{Ieeetr}

\bibliography{CHMHD-reference}

\begin{thebibliography}{10}

\bibitem{2000Liquid}
N.~Morley, S.~Smolentsev, L.~Barleon, I.~Kirillov, and M.~Takahashi, ``Liquid
  magnetohydrodynamics-recent progress and future directions for fusion,'' {\em
  FUSION ENG DES}, vol.~51, pp.~701--713, 2000.

\bibitem{2016Numerical}
A.~Hadidi and D.~Jalali-Vahid, ``Numerical simulation of dielectric bubbles
  coalescence under the effects of uniform magnetic field,'' {\em THEOR COMP
  FLUID DYN}, vol.~30, no.~3, pp.~165--184, 2016.

\bibitem{2003ALC}
C.~Liu and J.~Shen, ``A phase field model for the mixture of two incompressible
  fluids and its approximation by a {F}ourier-spectral method,'' {\em PHYSICA
  D}, vol.~179, no.~3, pp.~211--228, 2003.

\bibitem{2010Level}
H.~Ki, ``Level set method for two-phase incompressible flows under magnetic
  fields,'' {\em COMPUT PHYS COMMUN}, vol.~181, no.~6, pp.~999--1007, 2010.

\bibitem{2012Effect}
M.~R. Ansari, A.~Hadidi, and M.~E. Nimvari, ``Effect of a uniform magnetic
  field on dielectric two-phase bubbly flows using the level set method,'' {\em
  J MAGN MAGN MATER}, vol.~324, no.~23, pp.~4094--4101, 2012.

\bibitem{1964EFFECT}
R.~J. Thome, ``Effect of a transverse magnetic field on vertical two-phase flow
  through a rectangular channel,'' {\em Argonne {N}ational {L}aboratory
  {R}eport}, vol.~ANL-6854, 1964.

\bibitem{2022Highly}
H.~Su and G.~Zhang, ``Highly efficient and energy stable schemes for the
  2{D}/3{D} diffuse interface model of two-phase magnetohydrodynamics,'' {\em J
  SCI COMPUT}, vol.~90, no.~63, 2022.

\bibitem{2021SharpZX}
X.~Zhang, ``Sharp-interface limits of the diffuse interface model for two-phase
  inductionless magnetohydrodynamic fluids,'' {\em arXiv.2106.10433}, 2021.

\bibitem{2019AYang}
J.~Yang, S.~Mao, X.~He, X.~Yang, and Y.~He, ``A diffuse interface model and
  semi-implicit energy stable finite element method for two-phase
  magnetohydrodynamic flows,'' {\em COMPUT METHOD APPL M}, vol.~356,
  pp.~435--464, 2019.

\bibitem{zhang2023gauge}
J.~Zhang, H.~Su, and X.~Feng, ``Gauge-{U}zawa-based, highly efficient decoupled
  schemes for the diffuse interface model of two-phase magnetohydrodynamic,''
  {\em COMMUN NONLINEAR SCI}, vol.~126, p.~107477, 2023.

\bibitem{SU2023107126}
H.~Su and G.~Zhang, ``Energy stable schemes with second order temporal accuracy
  and decoupled structure for diffuse interface model of two-phase
  magnetohydrodynamics,'' {\em COMMUN NONLINEAR SCI}, vol.~119, p.~107126,
  2023.

\bibitem{AAMM-13-761}
J.~Zhao, R.~Chen, and H.~Su, ``An energy-stable finite element method for
  incompressible {M}agnetohydrodynamic-{C}ahn-{H}illiard coupled model,'' {\em
  ADV APPL MATH MECH}, vol.~13, no.~4, pp.~761--790, 2021.

\bibitem{2022Unconditional}
C.~Chen and T.~Zhang, ``Unconditional stability and optimal error estimates of
  first order semi-implicit stabilized finite element method for two phase
  magnetohydrodynamic diffuse interface model,'' {\em APPL MATH COMPUT},
  no.~429, p.~127238, 2022.

\bibitem{2025Error}
X.~Shen and Y.~Cai, ``Error estimates of time discretizations for a
  {C}ahn-{H}illiard phase-field model for the two-phase magnetohydrodynamic
  flows,'' {\em APPL NUMER MATH}, vol.~207, pp.~585--607, 2025.

\bibitem{2024A}
D.~Wang, Y.~Guo, F.~Liu, H.~Jia, and C.~Zhang, ``A fully decoupled linearized
  and second-order accurate numerical scheme for two-phase magnetohydrodynamic
  flows,'' {\em INT J NUMER METH FL}, vol.~96, no.~4, pp.~482--509, 2024.

\bibitem{2023ErrorQIU}
H.~Qiu, ``Error analysis of fully discrete scheme for the
  {C}ahn-{H}illiard-{M}agneto-{H}ydrodynamics problem,'' {\em J SCI COMPUT},
  vol.~95, no.~16, 2023.

\bibitem{0Convergence}
C.~Wang, J.~Wang, S.~Wise, Z.~Xia, and L.~Xu, ``Convergence analysis of a
  temporally second-order accurate finite element scheme for the
  {C}ahn-{H}illiard-{M}agnetohydrodynamics system of equations,'' {\em J COMPUT
  APPL MATH}, vol.~436, p.~115409, 2024.

\bibitem{2025Unconditionally}
J.~Yang, S.~Mao, and X.~He, ``Unconditionally optimal convergent
  zero-energy-contribution scheme for two phase mhd model,'' {\em J SCI
  COMPUT}, vol.~102, no.~55, pp.~1--72, 2025.

\bibitem{2019FullyZGD}
G.~Zhang, X.~He, and X.~Yang, ``Fully decoupled, linear and unconditionally
  energy stable time discretization scheme for solving the magneto-hydrodynamic
  equations,'' {\em J COMPUT APPL MATH}, vol.~369, p.~112636, 2019.

\bibitem{2020Convergence}
X.~Yang and G.~Zhang, ``Convergence analysis for the invariant energy
  quadratization ({IEQ}) schemes for solving the {C}ahn-{H}illiard and
  {A}llen-{C}ahn equations with general nonlinear potential,'' {\em J SCI
  COMPUT}, vol.~82, no.~55, pp.~1--28, 2020.

\bibitem{1958Free}
J.~W. Cahn and J.~E. Hilliard, ``Free energy of a nonuniform system. {I}.
  interfacial free energy,'' {\em J CHEM PHYS}, vol.~28, no.~2, p.~258, 1958.

\bibitem{2010Energy}
J.~Shen and X.~Yang, ``Energy stable schemes for {C}ahn-{H}illiard phase-field
  model of two-phase incompressible flows,'' {\em CHINESE ANN MATH B}, vol.~31,
  no.~5, pp.~743--758, 2010.

\bibitem{2019The}
A.~Miranville, {\em The {C}ahn-{H}illiard Equation: Recent Advances and
  Applications}.
\newblock 2019.

\bibitem{1975Sobolev}
R.~A. Adams, ``Sobolev spaces academic press,'' 1975.

\bibitem{2019Stability}
F.~Cheng and J.~Shen, ``Stability and convergence analysis of rotational
  velocity correction methods for the {N}avier-{S}tokes equations,'' {\em ADV
  COMPUT MATH}, vol.~45, no.~5, pp.~3123--3136, 2019.

\bibitem{2021EfficientEF}
X.~Yang, ``Efficient and energy stable scheme for the hydrodynamically coupled
  three components {C}ahn-{H}illiard phase-field model using the
  stabilized-invariant energy quadratization ({S-IEQ}) approach,'' {\em J
  COMPUT PHYS}, no.~438, p.~110342, 2021.

\bibitem{1990Finite}
J.~G. Heywood and R.~Rannacher, ``Finite-element approximation of the
  nonstationary {N}avier-{S}tokes problem part {IV}: Error analysis for
  second-order time discretization,'' {\em SIAM J NUMER ANAL}, vol.~27, no.~2,
  pp.~353--384, 1990.

\bibitem{2020ErrorXU}
Z.~Xu, X.~Yang, and H.~Zhang, ``Error analysis of a decoupled, linear
  stabilization scheme for the {C}ahn-{H}illiard model of two-phase
  incompressible flows,'' {\em J SCI COMPUT}, vol.~83, no.~3, pp.~1--27, 2020.

\bibitem{2019Fast}
C.~Chen and X.~Yang, ``Fast, provably unconditionally energy stable, and
  second-order accurate algorithms for the anisotropic {C}ahn-{H}illiard
  model,'' {\em COMPUT METHOD APPL M}, vol.~351, pp.~35--59, 2019.

\bibitem{2019EfficientLIU}
Z.~Liu and X.~Li, ``Efficient modified techniques of invariant energy
  quadratization approach for gradient flows,'' {\em APPL MATH LETT}, vol.~98,
  pp.~206--214, 2019.

\bibitem{CHEN2022114405}
R.~Chen and S.~Gu, ``On novel linear schemes for the {C}ahn-{H}illiard equation
  based on an improved invariant energy quadratization approach,'' {\em J
  COMPUT APPL MATH}, vol.~414, p.~114405, 2022.

\bibitem{2007AnZHANG}
Z.~Zhang and H.~Tang, ``An adaptive phase field method for the mixture of two
  incompressible fluids,'' {\em COMPUT FLUIDS}, vol.~36, no.~8, pp.~1307--1318,
  2007.

\end{thebibliography}
\end{document}